\theoremstyle{plain}
\newtheorem{theorem}{Theorem}[section]
\newtheorem{corollary}[theorem]{Corollary}
\newtheorem{proposition}[theorem]{Proposition}
\newtheorem{lemma}[theorem]{Lemma}
\newtheorem{question}[theorem]{Question}
\theoremstyle{definition}
\newtheorem{example}[theorem]{Example}
\theoremstyle{remark}
\newtheorem{remark}[theorem]{Remark}
\numberwithin{equation}{section}\theoremstyle{plain}
\newcommand{\I}{\mathcal{I}}
\renewcommand{\1}{\textbf{1}}
\newcommand{\A}{{\mathcal A}}
\newcommand{\B}{{\mathcal B}}
\newcommand{\C}{{\mathcal C}}
\newcommand{\D}{{\mathcal D}}
\newcommand{\Y}{{\mathcal Y}}
\newcommand{\Z}{{\mathcal Z}}
\newcommand{\E}{{\mathcal E}}
\newcommand{\U}{{\mathcal U}}
\newcommand{\Rep}{\operatorname{Rep}}
\newcommand{\cd}{\mathrm{cd}}
\newcommand\Irr{\operatorname{Irr}}
\newcommand\FPdim{\operatorname{FPdim}}
\newcommand\vect{\operatorname{Vec}}
\newcommand\svect{\operatorname{sVec}}
\newcommand\id{\operatorname{id}}
\newcommand\Hom{\operatorname{Hom}}
\newcommand\rk{\operatorname{rk}}
\begin{document}
\title{Braided extensions of a rank $2$ fusion category}
\author{Jingcheng Dong}
\email{jcdong@nuist.edu.cn}
\address{College of Mathematics and Statistics, Nanjing University of Information Science and Technology, Nanjing 210044, China}

\author{Hua Sun}
\address{Department of Mathematics, Yangzhou University, Yangzhou, Jiangsu 225002, China}
\email{d160028@yzu.edu.cn}


\keywords{Fusion category; Extension; Yang-Lee category; Ising category}

\subjclass[2010]{18D10}

\date{\today}

\begin{abstract}
We classify braided extensions $\C$ of a rank $2$ fusion category. The result shows that $\C$ is tensor equivalent to a Deligne's tensor product of some known categories, except $\C$ is slightly degenerate and generated by a $\sqrt{2}$-dimensional simple object. To start with, we describe the fusion rules, universal grading group, and the Frobenius-Perron dimensions of simple objects of $\C$ without the restriction that $\C$ is braided.
\end{abstract}

\maketitle

\section{Introduction}\label{sec1}
Fusion categories arise from many areas of mathematics and physics, including representation theory of semisimple Hopf algebras and quantum groups \cite{BaKi2001lecture}, vertex operator algebras \cite{2016DongWang} and topological quantum field theory \cite{Turaer1994}. By definition, a fusion category $\C$ is a $k$-linear semisimple rigid tensor category with finitely many isomorphism classes of simple objects, finite-dimensional vector spaces of morphisms and the unit object $\1$ is simple.

Group extensions are important notion in the theory of fusion categories. Let $G$ be a finite group. A fusion category $\C$ is $G$-graded if there is a decomposition $\C=\oplus_{g\in G}\C_g$ of $\C$ into a direct sum of full abelian subcategories, such that $(\C_g)^{*}=\C_{g^{-1}}$ and $\C_g\otimes \C_h\subseteq \C_{gh}$ for all $g,h \in G$. The grading $\C=\oplus_{g\in G}\C_g$ is called faithful if $\C_g\neq 0$ for all $g\in G$. A fusion category $\C$ is called a $G$-extension of $\D$ if there is faithful grading $\C=\oplus_{g\in G}\C_g$, such that the trivial component $\C_e$ is equivalent to $\D$, where $e$ is the identity of $G$.

Group extensions play an important role in the study and classification of fusion categories, see \cite{2016Dongbraided,etingof2010fusion,gelaki2008nilpotent} for example. These studies lead to a natural question:
\begin{question}
How much can be said about an extension of a given fusion category by any group?
\end{question}

The answer in the extreme case of $\D=\vect$ is that the extension $\C$ must be a pointed fusion category. For most examples, we do not know the answer to this question. In this paper, we make an attempt to study the first level of complexity; that is, braided extensions of a rank $2$ fusion category $\D$.

Recall from \cite{ostrik2003fusion} that a rank $2$ fusion category is either pointed or a Yang-Lee category. Let $\C$ be a $G$-extension of a rank $2$ fusion category. The main work of this paper is to describe the structure of $\C$ without any restriction on $G$, and apply this description to the classification of $\C$ when it is braided. Our main result is stated as follows.

\begin{theorem}
Let $\C$ be an extension of a rank $2$ fusion category. Assume that $\C$ is braided. Then $\C$ is exactly one of the following:

(1)\,  $\C$ is pointed.

(2)\,  $\C\cong \I\boxtimes \B$, where $\I$ is an Ising category, $\B$ is a braided pointed fusion category.

(3)\, $\C\cong \C_{ad}\boxtimes \C_{pt}$, where the adjoint subcategory $\C_{ad}$ is a Yang-Lee category, $\C_{pt}$ is the largest pointed fusion subcategory of $\C$.

(4)\, $\C$ is slightly degenerate and generated by a $\sqrt{2}$-dimensional simple object. In this case, $\C$ is prime.
\end{theorem}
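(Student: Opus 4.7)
The plan is to split on the trivial component $\C_e\cong\D$; by Ostrik's classification of rank-$2$ fusion categories, $\D$ is either $\vect_{\Zz/2}$ or the Yang-Lee category $\Y$. In either case the containment $\C_{ad}\subseteq\C_e=\D$ constrains the adjoint subcategory.

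\emph{Yang-Lee component.} Yang-Lee has no proper nontrivial fusion subcategory, so $\C_{ad}\in\{\vect,\Y\}$; the first option makes $\C$ pointed, contradicting $\Y\subseteq\C$, so $\C_{ad}=\Y$. Since $\Y$ is modular, Müger's decomposition theorem yields $\C\cong\Y\boxtimes\Y'$, where $\Y'$ denotes the Müger centralizer of $\Y$ in $\C$. The multiplicativity of the adjoint subcategory under Deligne products, combined with $\Y_{ad}=\Y$, forces $(\Y')_{ad}=\vect$, so $\Y'$ is pointed, and a Frobenius--Perron dimension count identifies it with $\C_{pt}$. This is case (3).

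\emph{Pointed component $\D=\vect_{\Zz/2}$.} Here $\C$ is nilpotent and $\C_{ad}$ is either $\vect$ (case (1)) or $\vect_{\Zz/2}$. Assume the latter. By the preliminary Theorems~4.7 and 5.6, every non-invertible simple of $\C$ has FPdim $\sqrt{2}$, and since every integer-dimensional simple is then invertible, the Müger center $\C'$ is contained in $\C_{pt}$. I would then split on $\C'$. If $\C'=\vect$, pick a $\sqrt{2}$-dimensional simple $X$; the relation $X\otimes X^*=\1\oplus g$ with $g\in\C_{ad}$, together with the grading data from Sections~4--5, identifies $\I:=\langle X\rangle$ as an Ising category. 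Modularity of $\I$ allows Müger decomposition $\C\cong\I\boxtimes\I'$, and the Deligne-product adjoint identity $\vect_{\Zz/2}=\C_{ad}=\vect_{\Zz/2}\boxtimes\I'_{ad}$ forces $\I'_{ad}=\vect$, so $\I'$ is a braided pointed fusion category, giving case (2). If $\C'=\svect$, then $\C$ is slightly degenerate; now the Ising subcategory $\langle X\rangle$ fails to be modular inside $\C$ (its pointed part coincides with $\C'$), so Müger's decomposition does not apply, and I would establish primality by arguing that any nontrivial Deligne factorization $\C=\C_1\boxtimes\C_2$ forces one factor pointed (via the adjoint identity), which is incompatible with the position of $\C'=\svect$ inside $\C_{pt}$ when $\C_{ad}=\vect_{\Zz/2}$. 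This gives case (4). A strictly Tannakian $\Rep(G)\subseteq\C'$ with $|G|>1$ I would rule out by de-equivariantizing and reducing to a smaller instance of the same classification.

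The main obstacle I anticipate is the slightly degenerate sub-case: establishing primality and the sharp characterization of $\C$ as generated by a single $\sqrt{2}$-dimensional simple will require careful use of the fusion rules and universal grading from Sections~4--5, since the standard Müger splitting tool is blocked. Cleanly handling the Tannakian-center sub-case via de-equivariantization, and verifying that the reduction terminates in one of cases (1)--(3), is a secondary technical hurdle.
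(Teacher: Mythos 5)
Your overall architecture --- split on the rank-$2$ trivial component, peel off a non-degenerate Yang--Lee or Ising factor via Theorem \ref{MugerThm}, and reduce a Tannakian center by de-equivariantization and induction --- matches the paper's, and your Yang--Lee branch is fine (the paper deduces pointedness of $\Y'$ from the dimensions of simple objects rather than from multiplicativity of the adjoint subcategory under $\boxtimes$, but both work). The $\vect_{\mathbb{Z}_2}$ branch, however, has two genuine gaps. In the non-degenerate sub-case you take an \emph{arbitrary} $\sqrt2$-dimensional simple $X$ and assert that $\langle X\rangle$ is Ising. This requires $X$ to be self-dual: if $X\ncong X^*$ then $\langle X\rangle$ already contains the four simples $\1,\delta,X,X^*$ and has dimension larger than $4$. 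Concretely, in $\I\boxtimes\vect_{\mathbb{Z}_3}$ with a non-degenerate braiding, the object $X\boxtimes g$ with $g\neq e$ satisfies $(X\boxtimes g)^{\otimes 2}\cong(\1\oplus\delta)\boxtimes g^2$ and generates the whole category. By Proposition \ref{Exist_Ising}, Ising subcategories correspond exactly to self-dual non-invertible simples, and producing one in the non-degenerate case is precisely the content of \cite[Theorem 5.4]{natale2013faithful}, which the paper invokes; it is not a consequence of $X\otimes X^*\cong\1\oplus\delta$ alone.

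The second and more serious gap is that you identify the sub-case $\mathcal{Z}_2(\C)=\svect$ with conclusion (4). That is not the correct dichotomy: $\I\boxtimes\svect$ is a slightly degenerate braided extension of $\vect_{\mathbb{Z}_2}^{0}$ (its adjoint subcategory is $\I_{ad}$, its M\"uger center is $\svect$), yet it contains the non-degenerate subcategory $\I$, is not prime, is not generated by any single $\sqrt2$-dimensional simple ($\U(\I\boxtimes\svect)\cong\mathbb{Z}_2\times\mathbb{Z}_2$ is not cyclic), and lands in case (2). The correct split inside the slightly degenerate case is on whether $\C$ is generated by a $\sqrt2$-dimensional simple, equivalently (Lemma \ref{faithful}) whether $\U(\C)$ is cyclic. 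When it is not, the paper's Lemma \ref{Exist_Ising32} takes a non-invertible $X$, observes $\langle X\rangle$ is a \emph{proper} non-pointed subcategory, and finds an Ising subcategory inside it by induction on dimension; when it is, the paper proves primality by showing every non-invertible simple is faithful, so $\C$ has no proper non-pointed subcategory, whence any non-degenerate subcategory $\D$ would be pointed and its centralizer $\D'$ proper and non-pointed --- a contradiction. Your proposed primality argument does not rule out a non-degenerate \emph{pointed} Deligne factor, and the remark that ``the Ising subcategory $\langle X\rangle$ fails to be modular'' cannot be right as stated: an Ising category is always non-degenerate, and in case (4) the point is that no Ising subcategory exists at all.
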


The organization of the paper is as follows. In section $2$ we recall some  basic results, notions and examples of fusion categories. In particular, we get the relationship between the order of $G(\C)$ and $\U(\C)$ when $\C$ is non-degenerate or slightly degenerate, where $G(\C)$ is the  group generated by isomorphism classes of invertible objects of $\C$, $\U(\C)$ is the universal grading group of $\C$. We prove that if $\C$ is non-degenerate then $|G(\C)|=|\U(\C)|$; if $\C$ is slightly degenerate then $|G(\C)|=|\U(\C)|$ or $2|U(\C)|$.

In Section \ref{sec3}, we study the $G$-extensions $\C$ of a rank $2$ pointed fusion category $\vect_{\mathbb{Z}_2}^{\omega}$. We prove that if $\C$ is not pointed then $\omega$ is trivial and $\C$ is a generalized Tambara-Yamagami fusion category. In this case, we obtain that the Frobenius-Perron dimension of every simple object of $\C$ is $1$ or $\sqrt{2}$ and the trivial component $\vect_{\mathbb{Z}_2}^{\omega}$ is exactly the adjoint subcategory $\C_{ad}$ of $\C$. We also obtain a sufficient and necessary condition for $\C$ containing an Ising category.

In Section \ref{sec4}, we study the $G$-extensions $\C$ of a Yang-Lee category. We prove that the Frobenius-Perron dimension of every simple object of $\C$ is $1$ or $\frac{1+\sqrt{5}}{2}$. We also obtain the fusion rules of $\C$. In particular, the Yang-Lee category is exactly the adjoint subcategory $\C_{ad}$ of $\C$ and the universal grading group $\U(\C)$ is isomorphic to the group $G(\C)$ of isomorphism classes of invertible objects of $\C$. Moreover, we prove that there is a 1-1 correspondence between the non-pointed fusion subcategories of $\C$ and the subgroups of the universal grading group $\mathcal{U}(\C)$.

In Section \ref{sec5}, we are devoted to find a non-degenerate fusion subcategory so that M\"{u}ger's decomposition theorem (see Theorem \ref{MugerThm}) can be used. It is easy when $\C$ is a braided extension of a Yang-Lee category $\Y$, since $\Y$ is non-degenerate. But the situation becomes difficult when $\C$ is a braided extension of $\vect_{\mathbb{Z}_2}^{\omega}$. We prove, except $\C$ is slightly degenerate and generated by a $\sqrt{2}$-dimensional simple object, $\C$ contains a Ising category which is non-degenerate. When $\C$ is slightly degenerate and generated by a $\sqrt{2}$-dimensional simple object, it is prime; that is, it can not contain any non-degenerate fusion subcategory.

\section{Preliminaries}\label{sec2}
Throughout this paper we shall work over an algebraically closed field $k$ of characteristic zero. Let $\C$ be a fusion category. The set of isomorphism classes of simple objects in $\C$ will be denoted by $\Irr(\C)$. It is a basis of the Grothendieck ring of $\C$. The cardinal number of $\Irr(\C)$ is called the rank of $\C$ and is denoted by $\rk(\C)$. The group of isomorphism classes of invertible objects of $\C$ will be denoted by $G(\C)$.

Let $\C$ be a fusion category. The Frobenius-Perron dimension $\FPdim(X)$ of a simple object $X \in \C$ is defined as the Frobenius-Perron eigenvalue of
the matrix of left multiplication by the class of $X$ in the basis $\Irr(\C)$ of the Grothendieck ring of $\C$. It is known that $\FPdim(X)\geq 1$ for all nonzero $X\in \C$, see \cite[Remark 8.4]{etingof2005fusion}. The Frobenius-Perron dimension of $\C$ is $\FPdim (\C)=\sum_{X \in \Irr(\C)} (\FPdim X)^2$.

\subsection{Pointed fusion category}\label{sec2.0}
A simple object $X\in \C$ is called invertible if $X\otimes X^*= \1$, where $X^*$ is the dual of $X$. This implies that $X$ is invertible if and only if $\FPdim(X)=1$. A fusion category is called pointed if all simple objects are invertible. Any pointed fusion category is equivalent to the category of finite dimensional vector spaces graded by some finite group $G$ with associativity determined by a cohomology class $\omega\in H^3(G,k^*)$, which is denoted by $\vect_{G}^{\omega}$. The largest pointed fusion subcategory of a fusion category $\C$ will be denoted by $\C_{pt}$.

\subsection{Universal grading}\label{sec2.1}
Let $\C$ be a fusion category. The adjoint subcategory $\C_{ad}$ is the full tensor subcategory of $\C$ generated by simple objects in $X\otimes X^{*}$ for all $X\in\Irr(\C)$. It is known that every fusion category has a canonical faithful grading $\C=\oplus_{g\in \U(\C)}\C_g$ with trivial component $\C_{ad}$ \cite{gelaki2008nilpotent}. This grading is called the universal grading of $\C$ and $\U(\C)$ is called the universal grading group of $\C$.

Any faithful grading of $\C$ by a finite group $G$ comes from a surjective group homomorphisms $\pi:\U(\C)\rightarrow G$ \cite[Corollary 3.7]{gelaki2008nilpotent}. Hence $\C_{ad}$ is a fusion subcategory of $\C_e$ for any faithful grading $\C=\oplus_{g\in G}\C_g$, where $e$ is the identity of $G$.

Let $\C=\oplus_{g\in G}\C_g$ be a $G$-extension of $\D$. Then the Frobenius-Perron dimensions of $\C_g$ are all equal and we have $\FPdim(\C)=|G|\FPdim(\D)$ \cite[Proposition 8.20]{etingof2005fusion}.

\subsection{Braided fusion categories}\label{sec2.2}
A braided fusion category $\C$ is a fusion category admitting a braiding $c$, where the braiding is a family of natural isomorphisms: $c_{X,Y}$:$X\otimes Y\rightarrow Y\otimes X$ satisfying the hexagon axioms for all $X,Y\in\C$.

\medbreak
Let $\C$ be a braided fusion category. Then $X,Y\in \C$ are said to centralize each other if $c_{Y,X}c_{X,Y}=\id_{X\otimes Y}$. The centralizer $\D'$ of a fusion subcategory $\D\subseteq \C$ is the full subcategory of objects which centralize every object of $\D$, that is
$$\D^{\prime}=\{X\in \C \mid c_{Y,X}c_{X,Y}=\id_{X\otimes Y},\forall\ Y\in \D\}.$$

The M\"{u}ger center $\Z_2(\C)$ of a braided fusion category $\C$ is the centralizer $\C'$ of $\C$ itself. A braided fusion category $\C$ is called non-degenerate if $\Z_2(\C)$ is equivalent to the trivial category $\vect$. A braided fusion category $\C$ is called slightly degenerate if $\Z_2(\C)$ is equivalent to the category $\svect$ of super-vector spaces.

\medbreak
A braided fusion category $\C$ is called symmetric if $\Z_2(\C)=\C$. Hence the M\"{u}ger center of a braided fusion category is a symmetric fusion category.

A symmetric fusion category $\C$ is called Tannakian if it is equivalent to the category $\Rep(G)$ of finite-dimensional representations of a finite group $G$, as braided fusion categories.

Let $\C$ be a symmetric fusion category. Deligne proved that there exist a finite group $G$ and a central element $u$ of order $2$, such that $\C$ is equivalent to the category $\Rep(G,u)$ of representations of $G$ on finite-dimensional super vector spaces, where $u$ acts as the parity operator \cite{deligne1990categories}. The following result is due to Drinfeld et al.

\begin{theorem}{\cite[Corollary 2.50]{drinfeld2010braided}}\label{SymmCat}
Let $\C$ be a symmetric fusion category. Then one of following holds:

(1)\,  $\C$ is Tannakian

(2)\,  $\C$ is an $\mathbb{Z}_2$-extension of a Tannakian subcategory.
\end{theorem}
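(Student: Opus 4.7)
The plan is to leverage the Deligne description $\C \simeq \Rep(G,u)$ that was just recalled in the excerpt, so that the whole statement reduces to a group-theoretic dichotomy on the parity element $u$. Concretely, I would split on whether $u=1$ or $u$ has order exactly $2$.

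If $u = 1$, then by definition $\Rep(G,u) = \Rep(G)$ as braided fusion categories, so $\C$ is Tannakian and we land in case (1). So the substance is the case where $u$ has order $2$. Here I would set $H := \langle u \rangle \subseteq G$, which is central of order $2$, and consider the pullback along $G \twoheadrightarrow G/H$. I would show that this realises $\Rep(G/H)$ as a fusion subcategory $\D \subseteq \C$: the irreducible representations of $G/H$ are exactly the irreducibles of $G$ on which $u$ acts trivially, and on those the parity action of $u$ is also trivial, so the super structure collapses and the braiding restricted to $\D$ is the usual symmetric braiding on $\Rep(G/H)$. In particular $\D$ is Tannakian.

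The next step is to exhibit a faithful $\Zz_2$-grading $\C = \D \oplus \C_1$ with trivial component $\D$. Because $u$ is central of order $2$, Schur's lemma forces it to act on every irreducible $V \in \Irr(\C)$ as $\pm \id_V$; declare $V$ to lie in the trivial (resp.\ nontrivial) component according to this sign. Centrality of $u$ makes this sign multiplicative under tensor products (the sign on $V \otimes W$ is the product of the signs), so the decomposition is a $\Zz_2$-grading of fusion categories. Faithfulness follows because, since $u \ne 1$, there is at least one irreducible representation of $G$ on which $u$ acts as $-1$; giving it the odd super-grading produces a simple object of $\C_1$. The trivial component is precisely $\D = \Rep(G/H)$ by construction, so $\C$ is a $\Zz_2$-extension of a Tannakian subcategory, which is case (2). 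A Frobenius-Perron dimension check ($\FPdim \C = |G|$, $\FPdim \D = |G|/2$) confirms that the extension has the correct index.

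The only real obstacle is Deligne's classification of symmetric fusion categories as $\Rep(G,u)$, but this is quoted in the excerpt and used as a black box. Once it is available, the verification that the parity decomposition really is a faithful fusion grading whose trivial component is a Tannakian subcategory is essentially bookkeeping with Schur's lemma and centrality of $u$; no subtle cohomological data enters, because we are only asserting the existence of the grading, not classifying it.
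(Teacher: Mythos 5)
Your proof is correct: the paper does not prove this statement but imports it from \cite[Corollary 2.50]{drinfeld2010braided}, and your argument is exactly the standard deduction of that corollary from Deligne's description $\C\simeq\Rep(G,u)$, which the paper quotes immediately beforehand. The dichotomy on $u$, the parity $\Zz_2$-grading via Schur's lemma and centrality, and the identification of the even part with $\Rep(G/\langle u\rangle)$ all check out, including the faithfulness of the grading when $u\neq 1$.
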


The theorem above implies that if $\FPdim(\C)$ is odd then $\C$ is Tannakian. Moreover if $\FPdim(\C)$ is bigger than $2$ then $\C$ necessarily contains a Tannakian subcategory. It is also known that a non-Tannakian symmetric fusion category of Frobenius-Perron dimension $2$ is equivalent to the category $\svect$, see \cite[Subsection 2.12]{drinfeld2010braided}.

\begin{proposition}\label{Adjo_Point}
Let $\C$ be a braided fusion category. Then $\C_{ad}\subseteq (\C_{pt})'$.
\end{proposition}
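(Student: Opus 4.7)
The plan is to show that each invertible object $g \in \C_{pt}$ centralizes every object of the form $X \otimes X^{*}$ with $X \in \Irr(\C)$. Since such objects generate $\C_{ad}$ and the centralizer $(\C_{pt})'$ is automatically a fusion subcategory (closed under tensor products, duals, and subobjects), this will immediately yield $\C_{ad} \subseteq (\C_{pt})'$.

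The key observation is that for an invertible $g$ and a simple $X$, the object $X \otimes g$ is again simple, so by Schur's lemma the double braiding $c_{g,X} \circ c_{X,g} : X \otimes g \to X \otimes g$ acts as a scalar $\lambda_{g,X} \in k^{\times}$; one similarly obtains a scalar $\lambda_{g,X^{*}}$ from $X^{*} \otimes g$. Unfolding $c_{g, X \otimes X^{*}}$ and $c_{X \otimes X^{*}, g}$ using the two hexagon axioms, I would compute that $c_{g, X \otimes X^{*}} \circ c_{X \otimes X^{*}, g}$ acts on $(X \otimes X^{*}) \otimes g$ as the scalar $\lambda_{g,X} \lambda_{g,X^{*}}$ times the identity.

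To pin down this product, I would exploit naturality of the braiding with respect to the evaluation $\mathrm{ev}_{X} : X \otimes X^{*} \to \1$. Since the double braiding on $\1 \otimes g \cong g$ is trivial, naturality forces
$\lambda_{g,X}\,\lambda_{g,X^{*}}\,(\mathrm{ev}_{X} \otimes \id_{g}) = \mathrm{ev}_{X} \otimes \id_{g}$,
and as $\mathrm{ev}_{X}$ is nonzero this gives $\lambda_{g,X}\,\lambda_{g,X^{*}} = 1$. Hence $g$ centralizes $X \otimes X^{*}$ for every simple $X$, and the proposition follows. The argument is essentially a hexagon-plus-naturality bookkeeping exercise, with no serious obstacle; the only delicate point is tracking the associativity and unit constraints correctly when decomposing the braidings, and verifying that the scalar extracted from $X \otimes g$ matches the one governing the action on $(X \otimes X^{*}) \otimes g$ after passing through the hexagon.
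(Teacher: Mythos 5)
Your argument is correct, and it takes a genuinely different route from the paper. The paper's proof is a two-line reduction to known results: it embeds $\C$ into its Drinfeld center $\Z(\C)$ via the braiding, notes $\C_{pt}\subseteq \Z(\C)_{pt}$ and $\C_{ad}\subseteq \Z(\C)_{ad}$, and then invokes \cite[Corollary 3.27]{drinfeld2010braided}, which gives $(\D_{pt})'=\D_{ad}$ for the non-degenerate category $\D=\Z(\C)$. Your proof instead establishes the containment from first principles: invertibility of $g$ makes $X\otimes g$ and $X^{*}\otimes g$ simple, so the monodromies are scalars $\lambda_{g,X}$, $\lambda_{g,X^{*}}$; the hexagon identities make the monodromy of $g$ with $X\otimes X^{*}$ equal to $\lambda_{g,X}\lambda_{g,X^{*}}$; and naturality along $\mathrm{ev}_{X}$ (together with triviality of the monodromy with $\1$) forces this product to be $1$. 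Since $(\C_{pt})'$ is a fusion subcategory (in particular closed under direct summands, which you need because $\C_{ad}$ is generated by the \emph{simple summands} of the objects $X\otimes X^{*}$ --- you correctly flag this), the containment follows. The trade-off is clear: the paper's proof is shorter but leans on the non-degeneracy of $\Z(\C)$ and a substantial structural theorem from DGNO, while yours is self-contained, uses only the hexagon axioms and naturality, and in fact works verbatim in any braided tensor category without semisimplicity of $X\otimes X^{*}$ entering until the final "closed under summands" step. Both are valid; yours is arguably the more transparent explanation of \emph{why} the statement is true.
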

\begin{proof}
Suppose first that $\C$ is non-degenerate. Then $\C_{ad}= (\C_{pt})'$ by \cite[Corollary 3.27]{drinfeld2010braided}.

Now we suppose that $\C$ is an arbitrary braided fusion category. The braiding of $\C$ induces a canonical embedding of braided fusion categories $\C\hookrightarrow \Z(\C)$. Hence, we may identify $\C$ with a fusion subcategory of $\Z(\C)$. We therefore have $\C_{pt}\subseteq \Z(\C)_{pt}$ and $\C_{ad}\subseteq \Z(\C)_{ad}$. This implies that $(\C_{pt})'\supseteq (\Z(\C)_{pt})'=\Z(\C)_{ad}\supseteq\C_{ad}$. The  equality holds true because $\Z(\C)$ is non-degenerate, also by \cite[Corollary 3.27]{drinfeld2010braided}. This completes the proofs.
\end{proof}

Let $\C$ and $\D$ be two fusion categories with $\Irr(\C)=\{X_1,X_2,\cdots,X_n \}$ and $\Irr(\D)=\{Y_1,Y_2,\cdots,Y_m \}$. The Deligne's tensor product $\C\boxtimes \D$ is a fusion category with simple objects $\Irr(\C\boxtimes \D)=\{X_i\boxtimes Y_j\mid 1\leq i\leq n,1\leq j\leq m\}$ and morphisms $\Hom(X_i\boxtimes Y_j,X_s\boxtimes Y_t)=\Hom(X_i, X_s)\otimes \Hom(Y_j, Y_t)$.

Let $X_1\boxtimes Y_1$ and $X_2\boxtimes Y_2$ be two simple objects of $\C\boxtimes \D$. Then we have
$$(X_1\boxtimes Y_1)\otimes (X_2\boxtimes Y_2)=(X_1\otimes X_2)\boxtimes (Y_1\otimes Y_2).$$

The following theorem is due to Drinfeld et al. In the case when $\C$ is modular, it is due to M\"{u}ger \cite[Theorem 4.2]{muger2003structure}.

\begin{theorem}{\cite[Theorem 3.13]{drinfeld2010braided}}\label{MugerThm}
Let $\C$ be a braided fusion category and $\D$ be a non-degenerate subcategory of $\C$. Then $\C$ is braided equivalent to $\D\boxtimes \D'$, where $\D'$ is the centralizer of $\D$ in $\C$.
\end{theorem}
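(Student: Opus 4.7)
My plan is to construct the natural candidate functor
\[
F : \D \boxtimes \D' \longrightarrow \C, \qquad X \boxtimes Y \;\longmapsto\; X \otimes Y,
\]
and to show that non-degeneracy of $\D$ forces it to be a braided equivalence. The first step is to equip $F$ with tensor structure via
\[
J_{X_1 \boxtimes Y_1,\, X_2 \boxtimes Y_2} \;=\; \id_{X_1} \otimes c^{-1}_{Y_1, X_2} \otimes \id_{Y_2},
\]
landing in $F((X_1 \otimes X_2) \boxtimes (Y_1 \otimes Y_2))$. The centralization identity $c_{X_2,Y_1}\, c_{Y_1,X_2} = \id$ for $Y_1 \in \D'$ and $X_2 \in \D$ is exactly what makes the two hexagon axioms for $J$ commute, and it forces $F$ to carry the product braiding $c_{X,X'} \boxtimes c_{Y,Y'}$ of $\D \boxtimes \D'$ to the braiding of $\C$.

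The second step is full faithfulness on simple objects. For simple $X_i \in \D$ and $Y_j \in \D'$, the rigidity adjunction gives
\[
\Hom_\C(X_1 \otimes Y_1,\, X_2 \otimes Y_2) \;\cong\; \Hom_\C(X_2^* \otimes X_1,\, Y_2 \otimes Y_1^*).
\]
Decomposing $X_2^* \otimes X_1$ into simple summands of $\D$, any summand $Z$ contributing a nonzero Hom must be a subobject of the $\D'$-object $Y_2 \otimes Y_1^*$, hence lie in $\D \cap \D' \subseteq \Z_2(\D)$. Non-degeneracy of $\D$ collapses $\Z_2(\D)$ to $\vect$, so only $Z = \1$ survives, and the Hom reduces exactly to $\Hom_\D(X_1,X_2) \otimes \Hom_{\D'}(Y_1,Y_2)$.

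To promote full faithfulness to an equivalence I would invoke Müger's centralizer dimension formula $\FPdim(\D)\FPdim(\D') = \FPdim(\C)\FPdim(\D \cap \D')$. The same collapse $\D \cap \D' \simeq \vect$ used above yields $\FPdim(\D \boxtimes \D') = \FPdim(\C)$, and a fully faithful tensor functor between fusion categories of equal Frobenius--Perron dimension is automatically essentially surjective. Since $F$ was built to intertwine the braidings, the resulting equivalence $\D \boxtimes \D' \simeq \C$ is braided, which is the statement.

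The main obstacle is the centralizer dimension identity itself: it is a substantive theorem of Müger (originally via an analysis of the regular algebra of $\D$ and its local modules, or alternatively via de-equivariantization when $\D \cap \D'$ contains a Tannakian piece) rather than a formal consequence of the definitions. Once it is in hand, the remainder of the argument is essentially mechanical: the tensor structure on $F$ comes from the centralization hypothesis, the fully faithful check is combinatorial after one reduces to $\Z_2(\D)$, and the final essential-surjectivity step is a general fact about tensor functors between fusion categories.
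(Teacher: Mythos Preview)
The paper does not prove this theorem; it is quoted as \cite[Theorem 3.13]{drinfeld2010braided} (and attributed to M\"uger in the modular case), so there is no in-paper argument to compare your proposal against. Your outline is essentially the standard proof found in those references: build the tensor functor $X\boxtimes Y\mapsto X\otimes Y$ with monoidal structure coming from the braiding, use $\D\cap\D'=\Z_2(\D)=\vect$ to get full faithfulness, and then match Frobenius--Perron dimensions to conclude.

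One small correction worth flagging: the centralizer dimension identity you quote is not quite the one proved in \cite{drinfeld2010braided}. Their Theorem~3.14 gives
\[
\FPdim(\D)\,\FPdim(\D') \;=\; \FPdim(\C)\,\FPdim\bigl(\D\cap\Z_2(\C)\bigr),
\]
with $\D\cap\Z_2(\C)$ rather than $\D\cap\D'$ on the right. These do \emph{not} agree in general (take $\D$ symmetric inside a non-degenerate $\C$). In your situation it makes no difference, since $\Z_2(\C)\subseteq\D'$ forces $\D\cap\Z_2(\C)\subseteq\D\cap\D'=\vect$, but you should cite the correct version. With that adjustment your sketch is fine and matches the literature proof.
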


In \cite{gelaki2008nilpotent}, the authors proved that if $\C$ is a modular category then there is a canonical isomorphism  between $\mathcal{U}(\C)$ and $G(\C)$. The following propositions (Proposition \ref{non_degenerate}, \ref{slig_degen}) show that, in the non-degenerate or slightly degenerate case, $\mathcal{U}(\C)$ and $G(\C)$ also have tight relationships.

\begin{proposition}\label{non_degenerate}
Let $\C$ be a non-degenerate fusion category. Then $|G(\C)|=|\mathcal{U}(\C)|$.
\end{proposition}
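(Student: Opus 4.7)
The plan is to exploit the duality between $\C_{pt}$ and $\C_{ad}$ which holds under non-degeneracy, extract a Frobenius--Perron dimension identity from it, and compare the result with the dimension of $\C$ coming from its universal grading.

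First, since $\C$ is non-degenerate, the proof of Proposition \ref{Adjo_Point} (using \cite[Corollary 3.27]{drinfeld2010braided}) gives the equality $\C_{ad} = (\C_{pt})'$. A standard consequence of non-degeneracy is that for any fusion subcategory $\D \subseteq \C$ one has $\FPdim(\D)\cdot \FPdim(\D') = \FPdim(\C)$; this is \cite[Theorem 3.14]{drinfeld2010braided}. Applying this with $\D = \C_{pt}$ yields
\[
\FPdim(\C_{pt}) \cdot \FPdim(\C_{ad}) \;=\; \FPdim(\C).
\]

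Second, the universal grading $\C = \oplus_{g \in \U(\C)} \C_g$ has trivial component $\C_{ad}$, and by \cite[Proposition 8.20]{etingof2005fusion} (recalled above) every graded component has the same Frobenius--Perron dimension, so
\[
\FPdim(\C) \;=\; |\U(\C)| \cdot \FPdim(\C_{ad}).
\]
Moreover, $\C_{pt}$ is pointed of rank $|G(\C)|$, so $\FPdim(\C_{pt}) = |G(\C)|$.

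Combining the two displayed identities gives $|G(\C)| \cdot \FPdim(\C_{ad}) = |\U(\C)| \cdot \FPdim(\C_{ad})$; since $\FPdim(\C_{ad}) \geq 1 > 0$, cancellation yields $|G(\C)| = |\U(\C)|$. The only nontrivial ingredient is the dimension formula $\FPdim(\D)\FPdim(\D') = \FPdim(\C)$ in the non-degenerate setting; this is standard, and can in any case be derived by the same trick used in the proof of Proposition \ref{Adjo_Point}, namely embedding $\C$ into $\Z(\C)$ and invoking Theorem \ref{MugerThm} on $\Z(\C)$, so no genuine obstacle arises.
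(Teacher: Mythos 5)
Your proof is correct and follows essentially the same route as the paper: both rest on the duality $\C_{ad}=(\C_{pt})'$ from \cite[Corollary 3.27]{drinfeld2010braided}, the dimension identity of \cite[Theorem 3.14]{drinfeld2010braided} (which in the non-degenerate case reduces to $\FPdim(\D)\FPdim(\D')=\FPdim(\C)$), and the count $\FPdim(\C)=|\U(\C)|\FPdim(\C_{ad})$ from the universal grading. The only cosmetic difference is that you apply the dimension formula to $\D=\C_{pt}$ while the paper applies it to $\D=\C_{ad}$ and then identifies $\FPdim(\C_{ad}')$ with $\FPdim(\C_{pt})$.
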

\begin{proof}
By \cite[Theorem 3.14]{drinfeld2010braided} and the non-degeneracy of $\C$, we have

\begin{equation}
\begin{split}
&\FPdim(\C_{ad})\FPdim(\C_{ad}')\\
=&\FPdim(\C)\FPdim(\C_{ad}\cap \mathcal{Z}_2(\C))\\
=&\FPdim(\C).
\end{split}\nonumber
\end{equation}

By \cite[Corollary 3.27]{drinfeld2010braided}, we have $\FPdim(\C_{pt})=\FPdim(\C_{ad}')$. This induces an equation $\FPdim(\C_{ad})\FPdim(\C_{pt})=\FPdim(\C)$.
On the other hand, $\FPdim(\C)=|\mathcal{U}(\C)| \FPdim(\C_{ad})$ (see Subsection \ref{sec2.1}). Hence $|G(\C)|=\FPdim(\C_{pt})=|\mathcal{U}(\C)|$.
\end{proof}

For a pair of  fusion subcategories $\A,\B$ of $\D$, we use $\A\vee \B$ to denote the smallest fusion subcategory of $\C$ containing $\A$ and $\B$. The following result will be frequently used in our proof.
\begin{lemma}\cite[Corollary 3.11]{drinfeld2010braided}\label{double_centralzer}
Let $\C$ be a braided fusion category. If $\D$ is any fusion subcategory of $\C$ then $\D''=\D\vee\mathcal{Z}_2(\C)$.
\end{lemma}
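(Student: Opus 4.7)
The plan is to establish the equality $\D'' = \D \vee \mathcal{Z}_2(\C)$ by a double containment, where the forward inclusion is immediate and the reverse inclusion is forced by a Frobenius-Perron dimension count.

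First I would verify the easy inclusion $\D \vee \mathcal{Z}_2(\C) \subseteq \D''$. The containment $\D \subseteq \D''$ is built into the definition of a centralizer: every object of $\D$ certainly centralizes every object of $\D'$. The containment $\mathcal{Z}_2(\C) \subseteq \D''$ is equally direct, since an object of $\mathcal{Z}_2(\C) = \C'$ centralizes \emph{every} object of $\C$, in particular every object of $\D' \subseteq \C$. The join of two subcategories sitting in a common fusion subcategory lies in that subcategory, giving the stated inclusion.

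Second, I would match Frobenius-Perron dimensions. The essential tool is the centralizer dimension identity of Drinfeld-Gelaki-Nikshych-Ostrik (Theorem 3.14 in their paper), which asserts that for any fusion subcategory $\mathcal{E} \subseteq \C$,
\begin{equation*}
\FPdim(\mathcal{E})\FPdim(\mathcal{E}') = \FPdim(\C)\FPdim(\mathcal{E} \cap \mathcal{Z}_2(\C)).
\end{equation*}
Applied to $\mathcal{E} = \D$ and then to $\mathcal{E} = \D'$, together with the observation that $\mathcal{Z}_2(\C) \subseteq \D'$ (so $\D' \cap \mathcal{Z}_2(\C) = \mathcal{Z}_2(\C)$), one eliminates $\FPdim(\D')$ to conclude
\begin{equation*}
\FPdim(\D'') = \frac{\FPdim(\D)\FPdim(\mathcal{Z}_2(\C))}{\FPdim(\D \cap \mathcal{Z}_2(\C))}.
\end{equation*}
The right-hand side is exactly $\FPdim(\D \vee \mathcal{Z}_2(\C))$ by the standard join-meet identity $\FPdim(\A \vee \B)\FPdim(\A \cap \B) = \FPdim(\A)\FPdim(\B)$ for fusion subcategories. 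Combined with the inclusion from the previous paragraph, and the fact that a proper fusion subcategory has strictly smaller Frobenius-Perron dimension, equality follows.

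The main obstacle is invoking the centralizer dimension identity, which is itself a deep structural result. Without this identity one would have to argue object-by-object that every simple $X \in \D''$ decomposes as a tensor factor of something in $\D \vee \mathcal{Z}_2(\C)$, which appears to require essentially the same machinery; so the dimension count really is the cleanest route. Everything else in the argument is elementary bookkeeping with the two dimension formulas.
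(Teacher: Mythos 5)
The paper offers no proof of this lemma at all---it is quoted verbatim from \cite[Corollary 3.11]{drinfeld2010braided}---so your attempt has to stand on its own. The easy inclusion $\D\vee\mathcal{Z}_2(\C)\subseteq\D''$ is fine, and the two applications of the identity $\FPdim(\mathcal{E})\FPdim(\mathcal{E}')=\FPdim(\C)\FPdim(\mathcal{E}\cap\mathcal{Z}_2(\C))$ (to $\mathcal{E}=\D$ and $\mathcal{E}=\D'$, using $\mathcal{Z}_2(\C)\subseteq\D'$) correctly yield $\FPdim(\D'')=\FPdim(\D)\FPdim(\mathcal{Z}_2(\C))/\FPdim(\D\cap\mathcal{Z}_2(\C))$. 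The gap is the last step: the ``standard join--meet identity'' $\FPdim(\A\vee\B)\FPdim(\A\cap\B)=\FPdim(\A)\FPdim(\B)$ is not a standard fact and is false for general fusion subcategories---already in $\vect_{S_3}$ the subcategories attached to two distinct order-$2$ subgroups have trivial intersection yet generate the whole $6$-dimensional category, so $6\cdot 1\neq 2\cdot 2$. In the braided setting the instance you need (one of the two subcategories being the M\"uger center, hence centralizing the other) is essentially equivalent to the statement being proved, so it cannot be cited as known without collapsing the argument into a circle.

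The repair stays entirely inside your own framework: apply the same dimension identity a third time, to $\mathcal{E}=\D\vee\mathcal{Z}_2(\C)$. Since the centralizer of a join is the intersection of the centralizers and $\mathcal{Z}_2(\C)'=\C$, one gets $\mathcal{E}'=\D'\cap\C=\D'$, while $\mathcal{E}\cap\mathcal{Z}_2(\C)=\mathcal{Z}_2(\C)$; hence $\FPdim(\mathcal{E})\FPdim(\D')=\FPdim(\C)\FPdim(\mathcal{Z}_2(\C))=\FPdim(\D')\FPdim(\D'')$, so $\FPdim(\D\vee\mathcal{Z}_2(\C))=\FPdim(\D'')$, and together with the inclusion and the fact that a proper fusion subcategory has strictly smaller Frobenius--Perron dimension this closes the proof (and, incidentally, establishes the join--meet identity in this special case as a corollary rather than an input). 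One further caveat: you are deriving Corollary 3.11 of \cite{drinfeld2010braided} from Theorem 3.14 of the same paper, which appears later there; this is legitimate only if the proof of that theorem (via the non-degenerate Drinfeld center and M\"uger's theorem) does not itself rely on Corollary 3.11, a dependency you should verify before presenting this as an independent argument.
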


\begin{proposition}\label{slig_degen}
Let $\C$ be a slightly degenerate braided fusion category. Then one of the following holds true.

(1)\, $|G(\C)|=|\mathcal{U}(\C)|$. If this is the case then $\mathcal{Z}_2(\C) \nsubseteq \C_{ad}$.

(2)\, $|G(\C)|=2|\mathcal{U}(\C)|$. If this is the case then $\mathcal{Z}_2(\C_{ad})=\mathcal{Z}_2(\C_{ad}^{'})$ contains the category $\svect$.
\end{proposition}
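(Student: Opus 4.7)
The plan is to mirror the proof of Proposition \ref{non_degenerate} by applying \cite[Theorem 3.14]{drinfeld2010braided} to $\C_{ad}$ and then identifying the centralizer $(\C_{ad})'$ with $\C_{pt}$ in the slightly degenerate setting. The starting observation is that $\mathcal{Z}_2(\C)=\svect$ is pointed, so $\svect\subseteq \C_{pt}$; moreover, since every object of $\mathcal{Z}_2(\C)$ centralizes every object of $\C$, also $\svect\subseteq (\C_{ad})'$, and combined with Proposition \ref{Adjo_Point} this gives $\C_{pt}\subseteq (\C_{ad})'$.

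First, applying \cite[Theorem 3.14]{drinfeld2010braided} to $\D = \C_{ad}$ yields
$$\FPdim(\C_{ad})\FPdim((\C_{ad})') = \FPdim(\C)\FPdim(\C_{ad}\cap\svect).$$
Using $\FPdim(\C) = |\U(\C)|\FPdim(\C_{ad})$ and the fact that the only fusion subcategories of $\svect$ are $\vect$ and $\svect$ itself, I obtain $\FPdim((\C_{ad})') = |\U(\C)|$ when $\svect\nsubseteq \C_{ad}$ and $\FPdim((\C_{ad})') = 2|\U(\C)|$ when $\svect\subseteq \C_{ad}$.

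The key technical step is then to establish $(\C_{ad})' = \C_{pt}$, which is the slightly degenerate analogue of \cite[Corollary 3.27]{drinfeld2010braided}. Given $\C_{pt}\subseteq (\C_{ad})'$, it suffices to show that every simple $X\in (\C_{ad})'$ is invertible. Since $X\otimes X^*\in \C_{ad}\cap (\C_{ad})' = \mathcal{Z}_2(\C_{ad})$, I would use the canonical embedding $\C\hookrightarrow \Z(\C)$ into the non-degenerate Drinfeld center, apply \cite[Corollary 3.27]{drinfeld2010braided} to $\Z(\C)$ (which gives $\Z(\C)_{pt} = (\Z(\C)_{ad})'$), and exploit that $\mathcal{Z}_2(\C)=\svect$ has Frobenius-Perron dimension only $2$ to force $\FPdim(X\otimes X^*) = 1$, so $X$ is invertible and lies in $\C_{pt}$.

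Once $(\C_{ad})' = \C_{pt}$ is established, the equality $|G(\C)| = \FPdim(\C_{pt}) = \FPdim((\C_{ad})')$ yields the dichotomy $|G(\C)|\in\{|\U(\C)|,\, 2|\U(\C)|\}$ immediately, with case (1) corresponding to $\svect\nsubseteq \C_{ad}$ (i.e., $\mathcal{Z}_2(\C)\nsubseteq \C_{ad}$) and case (2) to $\svect\subseteq \C_{ad}$. For the final assertion in case (2), Lemma \ref{double_centralzer} gives $((\C_{ad})')' = \C_{ad}\vee\svect = \C_{ad}$ (since $\svect\subseteq\C_{ad}$), whence
$$\mathcal{Z}_2((\C_{ad})') = (\C_{ad})'\cap ((\C_{ad})')' = (\C_{ad})'\cap \C_{ad} = \mathcal{Z}_2(\C_{ad}),$$
and this common intersection contains $\svect$. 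The main obstacle is the identification $(\C_{ad})' = \C_{pt}$: the analogous statement fails in braided fusion categories with larger M\"uger center, and its validity here hinges delicately on $\mathcal{Z}_2(\C)$ being merely $\svect$.
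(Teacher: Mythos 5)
Your overall strategy is the same as the paper's: apply \cite[Theorem 3.14]{drinfeld2010braided} to $\C_{ad}$, identify $(\C_{ad})'$ with $\C_{pt}$, reduce the dichotomy to whether $\C_{ad}\cap\mathcal{Z}_2(\C)$ equals $\vect$ or $\svect$, and obtain the final assertion of case (2) from Lemma \ref{double_centralzer}; your shortcut $\C_{ad}''=\C_{ad}\vee\svect=\C_{ad}$ in case (2) is fine and even slightly cleaner than the paper's appeal to \cite[Lemma 5.6]{drinfeld2010braided}. The one point of divergence is that you attempt to prove the identity $(\C_{ad})'=\C_{pt}$ from scratch, whereas the paper simply quotes it as \cite[Proposition 3.29]{drinfeld2010braided} --- and your sketch of that step has a genuine gap.

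Concretely: from $X\in(\C_{ad})'$ you only obtain $X\otimes X^*\in\C_{ad}\cap(\C_{ad})'=\mathcal{Z}_2(\C_{ad})$, and $\mathcal{Z}_2(\C_{ad})$ need not be contained in $\mathcal{Z}_2(\C)$, so nothing in your setup bounds $\FPdim(X\otimes X^*)$ by $2$. The Drinfeld-center detour does not repair this: from $\C_{ad}\subseteq\Z(\C)_{ad}$ and $(\Z(\C)_{ad})'=\Z(\C)_{pt}$ one only gets $(\C_{ad})'\supseteq\Z(\C)_{pt}\cap\C\supseteq\C_{pt}$, i.e. the inclusion you already had; it gives no upper bound on $(\C_{ad})'$. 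Moreover, even granting $X\otimes X^*\in\mathcal{Z}_2(\C)=\svect$ (which is the actual content of \cite[Proposition 3.25]{drinfeld2010braided}, characterizing $\C_{ad}'$ as the commutator of $\mathcal{Z}_2(\C)$), this only yields $\FPdim(X)^2\le 2$: the possibility $X\otimes X^*\cong\1\oplus\delta$ with $\FPdim(X)=\sqrt2$ (exactly what happens in an Ising category) is not excluded by a dimension count. Ruling it out requires M\"uger's lemma \cite[Lemma 5.4]{muger2000galois}, which says that the fermion $\delta\in\mathcal{Z}_2(\C)$ satisfies $\delta\otimes X\ncong X$ for all simple $X$, so $\delta$ cannot be a summand of $X\otimes X^*$. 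With these two ingredients supplied --- or by citing \cite[Proposition 3.29]{drinfeld2010braided} directly, as the paper does --- the remainder of your argument goes through.
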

\begin{proof}
By \cite[Proposition 3.29]{drinfeld2010braided}, $\C_{ad}^{'}=\C_{pt}$. Hence \cite[Theorem 3.14]{drinfeld2010braided} shows that
\begin{equation}\label{eq1}
\begin{split}
&\quad\FPdim(\C_{ad})\FPdim(\C_{pt})\\
&=\FPdim(\C_{ad})\FPdim(\C_{ad}^{'})\\
&=\FPdim(\C)\FPdim(\C_{ad}\cap\mathcal{Z}_2(\C)).
\end{split}\nonumber
\end{equation}

Since $\C_{ad}\cap\mathcal{Z}_2(\C)$ is a fusion subcategory of $\mathcal{Z}_2(\C)= \svect$, we have $\C_{ad}\cap \mathcal{Z}_2(\C)=\vect$ or $\svect$.

\medbreak
If $\C_{ad}\cap \mathcal{Z}_2(\C)=\vect$  then $$\FPdim(\C_{ad})\FPdim(\C_{pt})=\FPdim(\C)=\FPdim(\C_{ad})|\mathcal{U}(\C)|.$$
Hence $|G(\C)|=\FPdim(\C_{pt})=|\mathcal{U}(\C)|$. In this case $\mathcal{Z}_2(\C) \nsubseteq \C_{ad}$ since $\C_{ad}\cap \mathcal{Z}_2(\C)=\vect$.

\medbreak
If $\C_{ad}\cap \mathcal{Z}_2(\C)=\svect$ then $$\FPdim(\C_{ad})\FPdim(\C_{pt})=2\FPdim(\C)=2\FPdim(\C_{ad})|\mathcal{U}(\C)|.$$
Hence $|G(\C)|=\FPdim(\C_{pt})=2|\mathcal{U}(\C)|$. Moreover, in this case we have
$$\mathcal{Z}_2(\C_{ad})=\C_{ad}\cap \C_{ad}^{'}=\C_{ad}\cap \C_{pt}\supseteq \C_{ad}\cap \svect=\svect.$$

By Lemma \ref{double_centralzer}, we have
\begin{equation}\label{eq2}
\begin{split}
\mathcal{Z}_2(\C_{ad}^{'})&=\C_{ad}^{'}\cap \C_{ad}^{''}=\C_{ad}^{'}\cap (\C_{ad}\vee \mathcal{Z}_2(\C))\\
&=(\C_{ad}^{'}\cap \C_{ad})\vee \mathcal{Z}_2(\C)= \mathcal{Z}_2(\C_{ad})\vee \mathcal{Z}_2(\C)\\
&=\mathcal{Z}_2(\C_{ad}).
\end{split}\nonumber
\end{equation}

The third equation follows from \cite[Lemma 5.6]{drinfeld2010braided}, $\C$ being braided  and the fact that $\mathcal{Z}_2(\C)=\svect$ is contained in $\C_{ad}^{'}=\C_{pt}$. The last equation follows from the fact that $\mathcal{Z}_2(\C)=\svect$ is a fusion subcategory of $\mathcal{Z}_2(\C_{ad})$.
\end{proof}

Let $\C$ be a fusion category. Then the group $G(\C)$ acts on $\Irr(\C)$ by left(or right) tensor multiplication. For any object $X\in \Irr(\C)$, we shall use $G[X]$ to denote the stabilizer of $X$ under the action of $G(\C)$.

\begin{corollary}\label{slig_degen_triv_grad}
Let $\C$ be a slightly degenerate braided fusion category with trivial universal grading. Then we have

 (1)\, $\C_{pt}=\svect$.

 (2)\, $G[X]$ is trivial for every $X\in \Irr(\C)$.
\end{corollary}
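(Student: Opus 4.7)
The plan is to deduce (1) directly from Proposition~\ref{slig_degen}, and then prove (2) by contradiction, extracting a contradiction with $\mathcal{U}(\C)=1$ from a hypothetical nontrivial stabilizer.

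For (1), the hypothesis that $\mathcal{U}(\C)$ is trivial is equivalent to $\C_{ad}=\C$, and in particular $\mathcal{Z}_2(\C)\subseteq \C_{ad}$. This rules out case (1) of Proposition~\ref{slig_degen} and places us in case (2), giving $|G(\C)|=2|\mathcal{U}(\C)|=2$. Since $\svect=\mathcal{Z}_2(\C)$ is pointed and automatically sits inside $\C_{pt}$, the Frobenius-Perron dimension count $|\C_{pt}|=|G(\C)|=2=\FPdim(\svect)$ forces $\C_{pt}=\svect$. A shorter route uses directly the identity $\C_{ad}'=\C_{pt}$ from \cite[Proposition~3.29]{drinfeld2010braided} (recalled in the proof of Proposition~\ref{slig_degen}): the hypothesis $\C_{ad}=\C$ then yields $\C_{pt}=\C'=\mathcal{Z}_2(\C)=\svect$ at once.

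For (2), part (1) identifies $G(\C)=\{\1,g\}$ with $g$ the generator of $\svect\subseteq \mathcal{Z}_2(\C)$. I would assume for contradiction that $g\otimes X\cong X$ for some simple $X$, necessarily non-invertible. By Frobenius reciprocity $g$ is then a direct summand of $X\otimes X^*$ with multiplicity one, so the fusion subcategory $\D:=\langle X\rangle$ contains all of $\svect$. The strategy is to exploit two features simultaneously: the iso $g\otimes X\cong X$, and the defining self-braiding $c_{g,g}=-\id$ of the super-line $g$. Together these should endow $\D$ with a nontrivial $\Zz_2$-grading that separates the pointed part $\svect$ from the component containing $X$; because $g\in\mathcal{Z}_2(\C)$ centralizes every object of $\C$, this grading should extend to a faithful nontrivial grading of the ambient $\C$, producing a surjection $\mathcal{U}(\C)\twoheadrightarrow\Zz_2$ and contradicting $\mathcal{U}(\C)=1$.

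The main obstacle is the extension step: a grading on a fusion subcategory does not automatically extend to the ambient category, and one must carefully use the braiding and the centrality of $g$ to propagate the parity assignment consistently. A cleaner alternative I would attempt is to pass to a modular closure $\widetilde{\C}$ of $\C$ (a $\Zz_2$-extension with $\FPdim(\widetilde{\C})=2\FPdim(\C)$): a simple $X$ of $\C$ satisfying $g\otimes X\cong X$ splits as $X^+\oplus X^-$ in $\widetilde{\C}$, altering the simple/invertible count of $\widetilde{\C}$, and the modular identity $\mathcal{U}(\widetilde{\C})\cong\widehat{G(\widetilde{\C})}$ combined with the constraint $\mathcal{U}(\C)=1$ inherited from $\C$ should yield the contradiction without the need to construct the grading by hand.
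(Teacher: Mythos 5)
Part (1) of your proposal is correct and is essentially the paper's argument: Proposition~\ref{slig_degen} plus the hypothesis $\U(\C)=1$ gives $\FPdim(\C_{pt})=|G(\C)|=2$, and $\svect=\Z_2(\C)\subseteq\C_{pt}$ forces equality. Your shorter route $\C_{pt}=\C_{ad}'=\C'=\Z_2(\C)=\svect$ is also valid.

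Part (2) is where you have a genuine gap. The paper disposes of it in one line by citing M\"{u}ger's lemma (\cite[Lemma 5.4]{muger2000galois}, already used in Lemma~\ref{lemma100} and Lemma~\ref{cad_cpt}): if $\delta$ is the invertible object generating $\svect\subseteq\Z_2(\C)$, so that $c_{\delta,\delta}=-\id$ and $\delta$ centralizes everything, then $\delta\otimes X\ncong X$ for every simple $X$; the proof of that lemma is a short braiding computation in which a hypothetical isomorphism $f:\delta\otimes X\to X$ produces a nonzero morphism equal to its own negative. Neither of your strategies substitutes for this. Strategy A stalls exactly where you flag it: there is no reason the desired $\Zz_2$-grading of $\D=\langle X\rangle$ exists, let alone extends faithfully to $\C$ --- indeed $g\otimes X\cong X$ forces $g$ into $\D_{ad}$, so you are trying to manufacture precisely the kind of nontrivial grading whose nonexistence is the hypothesis; no contradiction is actually derived. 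Strategy B contains a concrete error: in a nondegenerate extension $\widetilde{\C}\supseteq\C$ the category $\C$ sits as a \emph{full} fusion subcategory, so a simple object of $\C$ remains simple in $\widetilde{\C}$ and does not split as $X^+\oplus X^-$; splitting occurs under de-equivariantization (condensation), which is not available for the fermion $\svect$ inside ordinary fusion categories. Moreover, the existence of a minimal nondegenerate extension of a slightly degenerate category is itself a deep theorem, far heavier than the statement being proved. The repair is simply to quote M\"{u}ger's lemma for the generator $\delta$ of $\C_{pt}=\svect=\Z_2(\C)$ obtained in part (1).
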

\begin{proof}
 (1)\, By Proposition \ref{slig_degen} and the assumption that $\mathcal{U}(\C)$ is trivial, we have $\FPdim(\C_{pt})=1$ or $2$. On the other hand, $\mathcal{Z}_2(\C)=\svect$ is a fusion subcategory of $\C_{pt}$. Hence $\FPdim(\C_{pt})=2$ and $\C_{pt}=\svect$.

 (2)\, Let $\delta$ be the unique non-trivial simple object generating $\mathcal{Z}_2(\C)=\C_{pt}$. By \cite[Lemma 5.4]{muger2003structure}, $\delta\otimes X\ncong X$ for every $X\in \Irr(\C)$. Therefore, $G[X]$ is trivial for every $X\in \Irr(\C)$.
\end{proof}

Let $\Irr_{\alpha}(\C)$ be the set of nonisomorphic simple objects of Frobenius-Perron dimension $\alpha$.

\begin{lemma}\label{lemma100}
Let $\C$ be a braided fusion category. Suppose that the M\"{u}ger center $\Z_2(\C)$ contains the category $\svect$ of super vector spaces. Then the rank of $\Irr_{\alpha}(\C)$ is even for every $\alpha$.
\end{lemma}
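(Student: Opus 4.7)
The plan is to exhibit a fixed-point-free involution on $\Irr_{\alpha}(\C)$ for each $\alpha$. Since $\svect \subseteq \Z_2(\C)$, there is a distinguished non-trivial simple object $\delta \in \Z_2(\C)$ with $\delta \otimes \delta \cong \1$ and $c_{\delta,\delta} = -\id$. Because $\delta$ is invertible, left tensor multiplication by $\delta$ gives a bijection $X \mapsto \delta \otimes X$ on $\Irr(\C)$, and since $\FPdim(\delta) = 1$, this bijection restricts to an involution on each set $\Irr_{\alpha}(\C)$ of simple objects of Frobenius-Perron dimension $\alpha$.

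The crucial step is to show that this involution has no fixed points, i.e.\ that $\delta \otimes X \ncong X$ for every $X \in \Irr(\C)$. This is exactly the content of \cite[Lemma 5.4]{muger2003structure}, which is already invoked in the proof of Corollary \ref{slig_degen_triv_grad}; the hypothesis there is that $\delta$ is a fermion in the M\"{u}ger center, and this is precisely what our assumption $\svect \subseteq \Z_2(\C)$ provides. Once we know the $\Zz_2$-action generated by $\delta$ on $\Irr_{\alpha}(\C)$ is free, every orbit has size $2$, and hence $|\Irr_{\alpha}(\C)|$ is even.

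I do not anticipate a real obstacle here: the argument is essentially a one-line orbit count, with the non-trivial input being the cited M\"{u}ger lemma (whose proof in turn uses that the twist of a fermion is $-1$, so that $\theta_{\delta \otimes X} = -\theta_X$ would contradict $\delta \otimes X \cong X$). The only thing to be careful about is that we only assume $\svect \subseteq \Z_2(\C)$ rather than equality, but this does not matter since $\delta$ still lies in $\Z_2(\C)$ and hence centralizes every object of $\C$, which is all that Müger's lemma requires.
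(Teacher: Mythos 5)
Your proof is correct and is essentially the paper's own argument: tensoring with the fermion $\delta$ gives a fixed-point-free involution on each $\Irr_{\alpha}(\C)$ by M\"{u}ger's Lemma 5.4, so the orbits all have size $2$. (The paper cites this lemma from M\"{u}ger's Galois-theory paper rather than the modular-categories paper, but the content invoked is the same.)
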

\begin{proof}
Let $\delta$ be the invertible object generating $\svect$, and let $X$ be an element in $\Irr_{\alpha}(\C)$. Then $\delta\otimes X$ is also an element in $\Irr_{\alpha}(\C)$. By \cite[Lemma 5.4]{muger2000galois}, $\delta\otimes X$ is not isomorphic to $X$. This implies that $\Irr_{\alpha}(\C)$ admits a partition $\{X_1,\cdots,X_n\}\cup \{\delta\otimes X_1,\cdots,\delta\otimes X_n\}$. Hence the rank of $\Irr_{\alpha}(\C)$ is even.
\end{proof}

\begin{proposition}\label{cad_non_deg}
Let $\C$ be a braided fusion category with non-trivial universal grading. Assume that $\C_{ad}$ is non-degenerate. Then

(1)\,  $\C_{ad}^{'}=\C_{pt}$ and $\mathcal{Z}_2(\C)=\mathcal{Z}_2(\C_{ad}^{'})$.

(2)\, $(\C_{ad})_{pt}=\vect$. In particular, $\mathcal{U}(\C_{ad})$ is trivial and hence $\C$ is not nilpotent.
\end{proposition}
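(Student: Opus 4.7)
The plan is to leverage Müger's decomposition theorem, which applies precisely because $\C_{ad}$ is assumed non-degenerate, to split the whole problem across a Deligne product $\C \cong \C_{ad} \boxtimes \C_{ad}'$. Nearly every claim then reduces to an elementary computation in that product, plus one application of Proposition \ref{Adjo_Point} and one application of Proposition \ref{non_degenerate}.

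For part (1), I would first record the braided equivalence $\C \cong \C_{ad} \boxtimes \C_{ad}'$ from Theorem \ref{MugerThm}. The key step is to show $\C_{ad}' \subseteq \C_{pt}$ by computing the adjoint of $\C_{ad}'$. For any simple $Y\in \C_{ad}'$, the object $Y\otimes Y^*$ belongs to $\C_{ad}$ (since $Y$ is simple in the ambient $\C$) and also to $\C_{ad}'$ (since $\C_{ad}'$ is closed under tensor and duals). Hence
\[
(\C_{ad}')_{ad}\ \subseteq\ \C_{ad}\cap \C_{ad}'\ =\ \mathcal{Z}_2(\C_{ad})\ =\ \vect,
\]
where the last equality uses the non-degeneracy of $\C_{ad}$. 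Since $(\C_{ad}')_{ad}=\vect$ forces every simple $Y\in \C_{ad}'$ to be invertible, $\C_{ad}'$ is pointed and $\C_{ad}'\subseteq \C_{pt}$. The reverse inclusion $\C_{pt}\subseteq \C_{ad}'$ is Proposition \ref{Adjo_Point}, so $\C_{ad}'=\C_{pt}$. For the second equality $\mathcal{Z}_2(\C)=\mathcal{Z}_2(\C_{ad}')$, I would use the product braiding on $\C_{ad}\boxtimes \C_{ad}'$: a simple $X\boxtimes Y$ centralizes everything in $\C$ iff $X\in\mathcal{Z}_2(\C_{ad})=\vect$ and $Y\in\mathcal{Z}_2(\C_{ad}')$, which gives $\mathcal{Z}_2(\C)=\vect\boxtimes \mathcal{Z}_2(\C_{ad}')=\mathcal{Z}_2(\C_{ad}')$.

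For part (2), the argument for $(\C_{ad})_{pt}=\vect$ is immediate from (1): any invertible object of $\C_{ad}$ is invertible in $\C$, so
\[
(\C_{ad})_{pt}\ \subseteq\ \C_{ad}\cap \C_{pt}\ =\ \C_{ad}\cap \C_{ad}'\ =\ \mathcal{Z}_2(\C_{ad})\ =\ \vect.
\]
Since $\C_{ad}$ is non-degenerate, I would apply Proposition \ref{non_degenerate} to $\C_{ad}$ itself to get $|\mathcal{U}(\C_{ad})|=|G(\C_{ad})|=|G((\C_{ad})_{pt})|=1$, so $\mathcal{U}(\C_{ad})$ is trivial and consequently $(\C_{ad})_{ad}=\C_{ad}$. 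For the nilpotency statement I would note that the upper central series $\C\supseteq \C_{ad}\supseteq (\C_{ad})_{ad}\supseteq\cdots$ stabilizes at $\C_{ad}$, and, under the scope in which this proposition is applied (so that $\C_{ad}\neq \vect$, i.e. $\C$ is not itself pointed), never reaches $\vect$; hence $\C$ is not nilpotent.

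The only subtle point I expect is the first step, $(\C_{ad}')_{ad}\subseteq \C_{ad}$: one must see that the adjoint of a fusion subcategory is automatically contained in the adjoint of the ambient category, which is a direct unpacking of the definitions but is essential for the non-degeneracy of $\C_{ad}$ to be put to work. Everything else is bookkeeping with the Deligne decomposition together with the two propositions already available in the excerpt.
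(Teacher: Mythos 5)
Your proposal is correct and follows essentially the same route as the paper: Müger's decomposition gives $\C_{ad}\cap\C_{ad}^{'}=\vect$, the $X\otimes X^{*}$ argument shows $\C_{ad}^{'}$ is pointed, Proposition \ref{Adjo_Point} gives the reverse inclusion, and Proposition \ref{non_degenerate} applied to $\C_{ad}$ handles part (2). The one place you diverge is the equality $\mathcal{Z}_2(\C)=\mathcal{Z}_2(\C_{ad}^{'})$: the paper derives it from Lemma \ref{double_centralzer} together with the distributivity of $\cap$ over $\vee$ for centralizers, whereas you compute the Müger center of the Deligne product directly from the product braiding ($\mathcal{Z}_2(\A\boxtimes\B)=\mathcal{Z}_2(\A)\boxtimes\mathcal{Z}_2(\B)$); both are valid, and yours is arguably more transparent once the decomposition is in hand. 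Your parenthetical caveat that the nilpotency conclusion requires $\C_{ad}\neq\vect$ is a fair observation --- the paper's own proof makes the same implicit assumption.
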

\begin{proof}
(1)\, Since $\C_{ad}$ is non-degenerate, we have a decomposition $\C=\C_{ad}\boxtimes \C_{ad}^{'}$ by Theorem \ref{MugerThm}. In particular, $\C_{ad}\cap \C_{ad}^{'}=\vect$.

Let $X$ be a simple object of $\C_{ad}^{'}$. Then $X\otimes X^*$ also lies in $\C_{ad}^{'}$. On the other hand, $X\otimes X^*$ lies in $\C_{ad}$ by definition. Hence $X$ must be invertible since  $\C_{ad}\cap \C_{ad}^{'}=\vect$. In other words, $\C_{ad}^{'}$ is fusion subcategory of $\C_{pt}$.

By Proposition \ref{Adjo_Point} and Lemma \ref{double_centralzer}, we have
$$\C_{pt}\supseteq \C_{ad}^{'}\supseteq(\C_{pt})^{''}= \C_{pt}\vee \mathcal{Z}_2(\C)\supseteq \C_{pt}.$$

Hence $\C_{ad}^{'}=\C_{pt}$ and  $\mathcal{Z}_2(\C)\subseteq \C_{pt}$. So we have, also by Lemma \ref{double_centralzer},
\begin{equation}\label{eq0}
\begin{split}
\mathcal{Z}_2(\C_{ad}^{'})&=\C_{ad}^{'}\cap \C_{ad}^{''}=\C_{ad}^{'}\cap(\C_{ad}\vee \mathcal{Z}_2(\C))\\
&=(\C_{ad}^{'}\cap \C_{ad})\vee \mathcal{Z}_2(\C)=\mathcal{Z}_2(\C).
\end{split}\nonumber
\end{equation}

The third equation follows from \cite[Lemma 5.6]{drinfeld2010braided}, $\C$ being braided and the fact that $\mathcal{Z}_2(\C)\subseteq \C_{ad}^{'}=\C_{pt}$

(2)\, By Part (1), we have
$$(\C_{ad})_{pt}=\C_{ad}\cap \C_{pt}=\C_{ad} \cap\C_{ad}^{'}=\vect.$$

By Proposition \ref{non_degenerate}, $|\mathcal{U}(\C_{ad})|=|G(\C_{ad})|=1$. Hence $\C_{ad}$ is not nilpotent, and so is $\C$.  See \cite{gelaki2008nilpotent} for the notion of a nilpotent fusion category.
\end{proof}

\subsection{Equivariantizations and de-equivariantizations}\label{sec23}
Let $\C$ be a fusion category with an action of a finite group $G$. We then can define a new fusion category $\C^G$ of $G$-equivariant objects in $\C$. An object of this category is a pair $(X,(u_g)_{g\in G})$, where $X$ is an object of $\C$, $u_g : g(X)\to X$ is an isomorphism for all $g\in G$, such that
$$u_{gh}\circ \alpha_{g,h} =u_g \circ g(u_h),$$
where $\alpha_{g,h}: g(h(X))\to gh(X)$ is the natural isomorphism associated to the action. Morphisms
and tensor product of equivariant objects are defined in an obvious way. This new category is called the $G$-equivariantization of $\C$.

In the other direction, let $\C$ be a fusion category and let
$\E = \Rep(G)\subseteq \mathcal{Z}(\C)$ be a Tannakian subcategory that embeds into $\C$ via the forgetful
functor $\mathcal{Z}(\C)\to \C$. Here $\mathcal{Z}(\C)$ denotes the Drinfeld center of $\C$. Let $A={\rm Fun}(G)$ be the algebra of functions on $G$. It is a commutative algebra in $\mathcal{Z}(\C)$. Let $\C_G$ denote the category of left $A$-modules in $\C$. It is a fusion category and called the de-equivariantization of $\C$ by $\E$. See \cite{drinfeld2010braided} for details on equivariantizations and de-equivariantizations.

Equivariantizations and de-equivariantizations are inverse to each other:
$$(\C_G)^G\cong\C\cong(\C^G)_G,$$
and their Frobenius-Perron dimensions have the following relations:
\begin{equation}\label{eq11}
\begin{split}
\FPdim(\C)=|G|\FPdim(\C_G) \,\mbox{\,and}\, \FPdim(\C^G)=|G| \FPdim(\C).
\end{split}
\end{equation}

\subsection{Ising categories}\label{sec2.3}
An Ising category $\I$ is a fusion category which is not pointed and $\FPdim(\I )=4$. Let $\I$ be an Ising category then $\Irr(\I)$ consists of three simple objects: $\1$, $\delta$ and $X$, where $\1$ is the unit object, $\delta$ is an invertible object and $X$ is a non-invertible object. They obey the following fusion rules:
$$\delta\otimes \delta\cong \1,\delta\otimes X\cong X\otimes \delta \cong X,X\otimes X\cong \1\oplus\delta.$$

It is easy to check that $\FPdim(\delta)=1$, $\FPdim(X)=\sqrt{2}$ and $X$ is self-dual. It is known that any Ising category $\I$ is a non-degenerate braided fusion category and the adjoint subcategory $\I_{ad}=\I_{pt}$ is braided equivalent to $\svect$. See \cite[Appendix B]{drinfeld2010braided} for more details on Ising categories.

\subsection{Rank $2$ fusion categories}\label{sec2.4}
Ostrik classified rank $2$ fusion categories in \cite{ostrik2003fusion}. Let $\C$ be a rank $2$ fusion category with $\Irr(\C)=\{\1,X\}$. The possible fusion rules for $\C$ are:

$${\rm (1)}\ X\otimes X\cong \1; \quad  {\rm (2)}\ X\otimes X\cong \1\oplus X.$$

If the first possibility holds true then $\C$ is pointed, and hence equivalent to $\vect_{\mathbb{Z}_2}^{\omega}$ for some $\omega\in H^{3}({\mathbb{Z}_2},k^{*})={\mathbb{Z}_2}$. Hence there are two such categories.

If the second possibility holds true then the fusion rules of $\C$ are called the Yang-Lee fusion rules. If this is the case, we call $\C$ a Yang-Lee category or a Fibonacci category. There are two such categories. They are both non-degenerate braided fusion categories. See for a detailed classification of rank $2$ fusion categories.

\section{Extensions of $\vect_{\mathbb{Z}_2}^{\omega}$}\label{sec3}
A generalized Tambara-Yamagami fusion category is a non-pointed fusion category such that, for all non-invertible simple objects $X,Y$, their tensor product $X\otimes Y$ is a direct sum of invertible objects. Generalized Tambara-Yamagami fusion categories were first studied in \cite{liptrap2010generalized}, and then were further studied in \cite{natale2013faithful}.

Let $1 = d_0 < d_1< \cdots < d_s$ be positive real numbers, and let $n_0,n_1,\cdots,n_s$ be positive integers. A fusion category is said of type $(d_0,n_0; d_1,n_1;\cdots;d_s,n_s)$ if $n_i$ is the number of the non-isomorphic simple objects of Frobenius-Perron dimension $d_i$, for all $0\leq i\leq s$.

In the theorem below, we use $0,1$ to denote the elements in $\mathbb{Z}_2$.

\begin{theorem}\label{General_TY_Cat}
Let $\C$ be a $G$-extension of a pointed fusion category $\vect_{\mathbb{Z}_2}^{\omega}$. Then

(1)\, If $\omega=1$ then $\C$ is pointed.

(2)\, If $\omega=0$ then $\C$ is either pointed or a generalized Tambara-Yamagami fusion category. If $\C$ is a generalized Tambara-Yamagami fusion category, then

\mbox{\hspace{0.6cm}}{(i)} $\C$ is of type $(1,2n;\sqrt2,n)$, where $|G(\C)|=2n$.

\mbox{\hspace{0.6cm}}{(ii)} $\vect_{\mathbb{Z}_2}^{0}=\C_{ad}$, $G=\U(\C)$ and $|\mathcal{U}(\C)|=2n$.
\end{theorem}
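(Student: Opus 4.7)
The plan is to analyze $\C=\bigoplus_{g\in G}\C_g$ component by component. Since $\FPdim(\C_e)=\FPdim(\vect_{\mathbb{Z}_2}^{\omega})=2$, \cite[Proposition 8.20]{etingof2005fusion} (recalled in Subsection \ref{sec2.1}) forces $\FPdim(\C_g)=2$ for every $g\in G$. Hence the simples of each $\C_g$ come with dimension vector either $(1,1)$ (two invertibles) or $(\sqrt{2})$ (one non-invertible simple). I set $H=\{g\in G\mid \C_g\text{ is pointed}\}$ and first show $H$ is a subgroup of index at most $2$: if $g,h\in H$ then tensor products of invertibles produce invertibles in $\C_{gh}$ and force it to be pointed; if $g,h\notin H$ with unique $\sqrt{2}$-dimensional simples $X_g,X_h$, then an expression $X_g\otimes X_h=mX_{gh}$ would require $m=\sqrt{2}$, so $X_g\otimes X_h$ must split as a sum of two invertibles and $gh\in H$ again. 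The case $H=G$ yields conclusion (1) in both subcases of the theorem.

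Assume henceforth $[G:H]=2$ and let $n=|H|$. A direct count gives $2n$ invertibles and $n$ non-invertible simples of dimension $\sqrt{2}$, so $\C$ has type $(1,2n;\sqrt{2},n)$ and $|G(\C)|=2n$. The tensor product of two non-invertible simples lies in the pointed component $\C_{gh}$, so it is a sum of invertibles, which is precisely the defining property of a generalized Tambara-Yamagami fusion category. For the identifications in (ii), I note that for any non-invertible simple $X_g$ the object $X_g\otimes X_g^{\ast}$ has Frobenius-Perron dimension $2$ and lies in $\C_e$, so $\C_{ad}\supseteq\C_e$; combined with the automatic inclusion $\C_{ad}\subseteq\C_e$, this gives $\C_{ad}=\C_e=\vect_{\mathbb{Z}_2}^{\omega}$. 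The universal property of the universal grading (Subsection \ref{sec2.1}) then produces a canonical isomorphism $\U(\C)\cong G$, and hence $|\U(\C)|=2n$.

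The main obstacle is to rule out non-pointed extensions when $\omega=1$, which is what upgrades the dichotomy in the first paragraph into part (1) of the theorem. Given a $\sqrt{2}$-dimensional simple $X_g$, the decomposition $X_g\otimes X_g^{\ast}=\1\oplus\delta$ (where $\delta$ is the non-trivial invertible of $\C_e$), together with Frobenius reciprocity, yields $\delta\otimes X_g\cong X_g$. Consequently $X_g$ is the unique simple of a rank-one indecomposable module category over $\C_e=\vect_{\mathbb{Z}_2}^{\omega}$; equivalently, $\1\oplus\delta$ is a separable algebra in $\C_e$ (the internal endomorphism algebra of $X_g$). Such a module category exists exactly when the restriction of $\omega$ to $\mathbb{Z}_2$ is a coboundary, which in $H^3(\mathbb{Z}_2,k^{\ast})=\mathbb{Z}_2$ forces $\omega=0$. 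Thus $\omega=1$ admits no $\sqrt{2}$-dimensional simple, every graded component is pointed, and $\C$ itself is pointed, giving (1); the preceding analysis then yields (2) in the remaining case $\omega=0$. The core difficulty is precisely this cohomological-to-categorical translation; everything else reduces to bookkeeping with Frobenius-Perron dimensions.
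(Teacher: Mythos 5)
Your proposal is correct, and it reaches the theorem by a noticeably more self-contained route than the paper. Where the paper invokes \cite[Theorem 3.10]{gelaki2008nilpotent} to obtain the canonical $\mathbb{Z}_2$-grading with trivial component $\C_{pt}$, you build that grading by hand: the subset $H=\{g:\C_g \text{ pointed}\}$ is a subgroup (it contains $e$, is closed under duals, and your two product computations give closure and the ``non-pointed times non-pointed is pointed'' rule), and the resulting index-$\le 2$ dichotomy is exactly the $G/H\cong\mathbb{Z}_2$ grading the paper imports; your count of simples in each coset then reproduces the type $(1,2n;\sqrt2,n)$ and the generalized Tambara--Yamagami property directly. (You also avoid \cite[Proposition 8.27]{etingof2005fusion}: since each component has Frobenius--Perron dimension $2$ and every simple has dimension $\ge 1$, the only possible component profiles are $(1,1)$ and $(\sqrt2)$, with no appeal to weak integrality.) The second genuine divergence is the vanishing of $\omega$: the paper cites \cite[Lemma 2.6]{jordan2009classification} as a black box, whereas you rederive it by viewing $\C_g$ as a rank-one indecomposable module category over $\C_e=\vect_{\mathbb{Z}_2}^{\omega}$ with internal endomorphism algebra $\1\oplus\delta$, whose existence forces $\omega$ to be a coboundary because an associative algebra structure on $\bigoplus_{h\in H}h$ in $\vect_G^\omega$ is a $2$-cochain trivializing $\omega|_H$, impossible for the nontrivial class in $H^3(\mathbb{Z}_2,k^*)$. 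This is the standard fact underlying the cited lemma, so nothing is circular; it just trades one external reference for the (equally standard) obstruction theory for module categories over pointed categories. Your identification $\C_{ad}=\C_e$ via $X_g\otimes X_g^*=\1\oplus\delta$ is also a slight variant of the paper's ``$\C_{ad}$ cannot be $\vect$'' argument, and the passage from $\C_{ad}=\C_e$ to $\U(\C)\cong G$ via the surjection $\U(\C)\to G$ is correct, though you should note explicitly that the kernel of that surjection is trivial because $\C_e=\bigoplus_{u\in\ker\pi}\C_u$ in the universal grading and the universal grading is faithful.
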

\begin{proof}
Let $\C=\oplus_{g\in G}\C_g$ be the corresponding faithful grading such that $\C_e=\vect_{\mathbb{Z}_2}^{\omega}$. Since this grading is faithful, every component $\C_g$ has Frobenius-Perron dimension $2$. Since $\C$ is weakly integral, the Frobenius-Perron dimension of every simple object is a square root of some integer \cite[Proposition 8.27]{etingof2005fusion}. This implies that every component $\C_g$ either contains $2$ non-isomorphic invertible objects, or contains a unique $\sqrt 2$-dimensional simple object.

\medbreak
Suppose that $\C$ is not pointed. It follows from \cite[Theorem 3.10]{gelaki2008nilpotent} that $\C$ has a faithful $\mathbb{Z}_2$-grading $\C=\oplus_{h\in \mathbb{Z}_2}\C^h$, where the trivial component $\C^0=\C_{pt}$ is the largest pointed fusion subcategory of $\C$, $\C^{1}$ contains all $\sqrt2$-dimensional simple objects. Let $X,Y$ be non-invertible simple objects of $\C$. Then $X$ and $Y$ lie in $\C^{1}$ and hence $X\otimes Y\in C^0$, which implies that $X\otimes Y$ is a direct sum of invertible objects. This
proves that $\C$ is a generalized Tambara-Yamagami fusion category.

Consider the grading $\C=\oplus_{g\in G}\C_g$. Then the trivial component $\C_e$ is pointed and there exists a component $\C_g$ containing a unique $\sqrt2$-dimensional simple object. It follows from \cite[Lemma 2.6]{jordan2009classification} that $\omega =0$ is trivial. This proves (1) and (2).

\medbreak
Consider the grading $\C=\oplus_{h\in \mathbb{Z}_2}\C^h$ and assume that the number of non-isomorphic invertible simple objects is $m$. Then $\FPdim(\C^0)=m$. Since $\FPdim(\C^0)=\FPdim(\C^{1})$, we get that the number of non-isomorphic $\sqrt2$-dimensional simple objects is $\frac{m}{2}$. Writing $m=2n$, we get that $\C$ is of type $(1,2n;\sqrt2,n)$. This proves (i).

\medbreak
Consider the grading $\C=\oplus_{g\in G}\C_g$. Since $\C_{ad}\subseteq \C_e=\vect_{\mathbb{Z}_2}^{0}$, we know $\C_{ad}=\vect$ or $\vect_{\mathbb{Z}_2}^{0}$. Obviously, $\C_{ad}$ can not be $\vect$, otherwise $\C$ is pointed. Hence $\C_{ad}=\vect_{\mathbb{Z}_2}^{0}$ and $G=\mathcal{U}(\C)$. It follows that $\FPdim(\C)=4n$ and $|\mathcal{U}(\C)|=2n$. This proves (ii).
\end{proof}

For any fusion category $\C$, $\cd(\C)$ denotes the set of Frobenius-Perron dimensions of simple objects of $\C$.

\begin{corollary}\label{necessary_sufficent}
Let $\C$ be a non-pointed fusion category. Then $\C$ is an extension of a rank $2$ pointed fusion category if and only if $\cd(\C)=\{1,\sqrt{2}\}$.
\end{corollary}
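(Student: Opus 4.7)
The forward implication is immediate from Theorem \ref{General_TY_Cat}: if $\C$ is non-pointed and a $G$-extension of a rank $2$ pointed fusion category $\vect_{\mathbb{Z}_2}^{\omega}$, then part (2)(i) says $\C$ is of type $(1, 2n; \sqrt{2}, n)$, so every simple object has Frobenius-Perron dimension $1$ or $\sqrt{2}$, and hence $\cd(\C) = \{1, \sqrt{2}\}$.

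For the reverse implication, suppose $\C$ is non-pointed with $\cd(\C) = \{1, \sqrt{2}\}$. The plan is to show that the universal grading $\C = \oplus_{g \in \U(\C)} \C_g$ exhibits $\C$ as an extension of the rank $2$ pointed fusion category $\C_{ad}$; equivalently, one must show $\C_{ad}$ has rank $2$. First I would observe that for any simple $X$ with $\FPdim(X) = \sqrt{2}$, the object $X \otimes X^*$ has Frobenius-Perron dimension $2$, and since $\cd(\C) = \{1, \sqrt{2}\}$ no $\sqrt{2}$-dimensional simple can appear as a summand; this forces $X \otimes X^* = \1 \oplus \delta_X$ with $\delta_X$ an invertible of order $2$ (nontrivial since the multiplicity of $\1$ is one, and self-dual since $X \otimes X^*$ is self-dual). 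Hence $\C_{ad} \subseteq \C_{pt}$ is pointed, and $\FPdim(\C_{ad}) \geq 2$ because $\C$ is non-pointed.

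The crucial remaining step is $\FPdim(\C_{ad}) = 2$. Note first that $\C$ is automatically a generalized Tambara-Yamagami fusion category, since for any non-invertible simples $X, Y$ the product $X \otimes Y$ has dimension $2$, which leaves no room for a $\sqrt{2}$-dim summand and forces it to be a sum of two invertibles. The stabilizer $G[X] = \{\delta \in G(\C) : \delta \otimes X \cong X\}$ coincides with $\{\1, \delta_X\}$, because $X \otimes X^* = \oplus_{\delta \in G[X]} \delta$ and hence $|G[X]| = \FPdim(X \otimes X^*) = 2$. The main obstacle is to prove that the various $\delta_X$ (as $X$ ranges over the $\sqrt{2}$-dimensional simples) all coincide, equivalently that $\delta_X$ is central in $G(\C)$, so that $\C_{ad}$ is the rank $2$ subcategory generated by $\delta_X$. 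I would approach this via the faithful $\mathbb{Z}_2$-grading $\C = \C_{pt} \oplus \C^1$ provided by \cite[Theorem 3.10]{gelaki2008nilpotent}, under which $\C^1$ is an invertible $\C_{pt}$-bimodule category; the bimodule structure forces the stabilizer $G[X]$ to be a normal subgroup of $G(\C)$, which in combination with the transitive $G(\C)$-action on the set of $\sqrt{2}$-dimensional simples (a consequence of $|G[X]|=2$ together with an orbit-counting argument) yields $\delta_X \in Z(G(\C))$.

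Once $\C_{ad} \cong \vect_{\mathbb{Z}_2}^{\omega}$ for some $\omega \in H^3(\mathbb{Z}_2, k^{*})$ has been established, the universal grading immediately realizes $\C$ as a $\U(\C)$-extension of the rank $2$ pointed fusion category $\C_{ad}$, completing the proof.
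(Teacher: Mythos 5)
Your overall strategy is the same as the paper's: the forward direction is read off from Theorem \ref{General_TY_Cat}, and for the converse you show that $\C$ is a generalized Tambara--Yamagami category and then that $\C_{ad}$ is the rank $2$ pointed subcategory generated by $G[X]=\{\1,\delta_X\}$ for a $\sqrt{2}$-dimensional simple $X$. The paper disposes of the last (and only nontrivial) step by citing \cite[Proposition 5.2]{natale2013faithful}, which says precisely that $\C_{ad}=\langle G[X]\rangle$ for any non-invertible simple $X$ of a generalized Tambara--Yamagami category; since $|G[X]|=2$, this gives $\FPdim(\C_{ad})=2$ at once.

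The one place where your write-up is not yet a proof is exactly this step. You correctly identify that the issue is showing all the $\delta_X$ coincide (equivalently that $\delta_X$ is central in $G(\C)$), but the justification offered --- that the invertible $\C_{pt}$-bimodule structure on $\C^{1}$ ``forces $G[X]$ to be normal'' --- is an assertion, not an argument; normality of stabilizers does not follow formally from $\C^1$ being an invertible bimodule category, and this normality is in fact the whole content of \cite[Lemma 5.1]{natale2013faithful}. The gap is easily closed, either by citing Natale as the paper does, or directly: for any invertible $g$ one has $(X\otimes g)\otimes(X\otimes g)^{*}\cong X\otimes X^{*}$, so $\delta_{X\otimes g}=\delta_X$, and the \emph{right} action of $G(\C)$ on the $\sqrt{2}$-dimensional simples is transitive by the same orbit count you use for the left action (the right stabilizer also has order $2$ since $\FPdim(X^{*}\otimes X)=2$); hence every non-invertible simple $Y$ is of the form $X\otimes g$ and $\delta_Y=\delta_X$. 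Combined with $\delta_{g\otimes X}=g\otimes\delta_X\otimes g^{-1}$ this gives centrality of $\delta_X$ and $\FPdim(\C_{ad})=2$. With that repair your argument is complete and agrees with the paper's.
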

\begin{proof}
By Theorem \ref{General_TY_Cat}, it suffices to prove that $\cd(\C)=\{1,\sqrt{2}\}$ implies that $\C$ is an extension of a rank $2$ pointed fusion category.

First, a similar argument as in the proof of Theorem \ref{General_TY_Cat} can prove that $\C$ is a generalized Tambara-Yamagami fusion category. Then, by \cite[Proposition 5.2]{natale2013faithful}, the adjoint subcategory $\C_{ad}$ coincides with the fusion subcategory generated by $G[X]$, for any $\sqrt{2}$-dimensional simple object $X$. Hence $\FPdim(\C_{ad})=2$ and $\C$ is an extension of a rank $2$ pointed fusion category.
\end{proof}

\begin{corollary}\label{transitive}
Let $\C$ be a $G$-extension of $\vect_{\mathbb{Z}_2}^{0}$. Assume that $\C$ is not pointed. Then

(1)\,  The action  of the group $G(\C)$ by left(or right) tensor multiplication on the set $\Irr(\C)-G(\C)$ is transitive.

(2)\, The group $\mathbb{Z}_2$ is a normal subgroup of $G(\C)$.
\end{corollary}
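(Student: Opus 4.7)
The plan is to leverage Theorem \ref{General_TY_Cat} to reduce both statements to a single computation of the stabilizer subgroup $G[X]$ for each $\sqrt{2}$-dimensional simple object $X$. By Theorem \ref{General_TY_Cat}, $\C$ is of type $(1,2n;\sqrt{2},n)$ with $\C_{ad}=\vect_{\mathbb{Z}_2}^{0}$, so $\Irr(\C)-G(\C)$ consists of exactly $n$ simple objects of Frobenius-Perron dimension $\sqrt{2}$, and $G(\C_{ad})=\{\1,\delta\}\cong\mathbb{Z}_2$ where $\delta$ denotes the unique non-trivial invertible in $\C_{ad}$.

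First I would study $X\otimes X^{*}$ for an arbitrary simple object $X$ of dimension $\sqrt{2}$. Since $X\otimes X^{*}$ lies in $\C_{ad}$ by definition of the adjoint subcategory and has Frobenius-Perron dimension $2$, and since every object of $\C_{ad}=\vect_{\mathbb{Z}_2}^{0}$ is a sum of invertibles from $\{\1,\delta\}$, we must have $X\otimes X^{*}\cong \1\oplus\delta$ (the unit occurs with multiplicity one because $X$ is simple). By Frobenius reciprocity, $\dim\Hom(g,X\otimes X^{*})=\dim\Hom(g\otimes X,X)$ is non-zero precisely when $g\otimes X\cong X$, so this computation shows that $G[X]=\{\1,\delta\}=G(\C_{ad})$ for every $\sqrt{2}$-dimensional simple $X$. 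With this key equality in hand, part (1) follows at once from the orbit-stabilizer theorem: every orbit of $G(\C)$ acting on $\Irr(\C)-G(\C)$ has cardinality $|G(\C)|/|G[X]|=2n/2=n$, which exhausts the whole set.

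For part (2), I would give a short conjugation argument. Take any $h\in G(\C)$ and any $g\in \mathbb{Z}_2=G[X]$. Then $(hgh^{-1})\otimes(h\otimes X)\cong h\otimes g\otimes X\cong h\otimes X$, so $hgh^{-1}\in G[h\otimes X]$. Since $h\otimes X$ is again a $\sqrt{2}$-dimensional simple object, the previous step gives $G[h\otimes X]=\mathbb{Z}_2$, hence $hgh^{-1}\in\mathbb{Z}_2$, establishing normality. The main obstacle is really the identification $G[X]=\mathbb{Z}_2$, which rests on the two facts that $\C_{ad}$ has exactly two invertibles and that $X\otimes X^{*}$ fits inside $\C_{ad}$ with total dimension $2$; once this is settled, both assertions are immediate.
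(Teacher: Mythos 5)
Your proof is correct, and it is genuinely more self-contained than the paper's. The paper disposes of both assertions in one line: having established via Theorem \ref{General_TY_Cat} that $\C$ is a generalized Tambara--Yamagami category, it simply cites \cite[Lemma 5.1]{natale2013faithful}, which packages exactly the transitivity and normality statements for that class of categories. You instead re-derive the content of that lemma from scratch: the computation $X\otimes X^{*}\cong \1\oplus\delta$ (forced by $\FPdim(X\otimes X^{*})=2$, the multiplicity-one occurrence of $\1$, and the fact that $\C_{ad}=\vect_{\mathbb{Z}_2}^{0}$ has only the two invertibles $\1,\delta$), Frobenius reciprocity to identify $G[X]=\{\1,\delta\}$, orbit--stabilizer to get an orbit of size $2n/2=n$ exhausting the $n$ non-invertible simples, and the conjugation identity $hgh^{-1}\in G[h\otimes X]=\{\1,\delta\}$ for normality. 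All steps check out; in particular the identification of ``the group $\mathbb{Z}_2$'' in part (2) with $G(\C_{ad})=G[X]$ is the intended reading. What your route buys is transparency and independence from the external reference; what the paper's route buys is brevity and the observation that the statement is a special case of a known structural fact about generalized Tambara--Yamagami categories rather than something particular to extensions of $\vect_{\mathbb{Z}_2}^{0}$.
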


\begin{proof}
Since $\C$ is not pointed, $\C$ is a generalized Tambara-Yamagami fusion category by Theorem \ref{General_TY_Cat}. The corollary then follows from \cite[Lemma 5.1]{natale2013faithful}.
\end{proof}

\begin{proposition}\label{Exist_Ising}
Let $\C=\oplus_{g\in G}\C_g$ be an extension of $\vect_{\mathbb{Z}_2}^{0}$. Then the following are equivalent:

(1)\, $\C$ contains an Ising category.

(2)\, There exists $g\in G$ such that $|g|=2$ and $\rk(\C_g)=1$.

(3)\, There exists a self-dual non-invertible simple object.
\end{proposition}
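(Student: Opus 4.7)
The plan is to exploit the strong structural information from Theorem~\ref{General_TY_Cat}: each graded component $\C_g$ is two-dimensional and either splits as two non-isomorphic invertibles or consists of a single $\sqrt{2}$-dimensional simple object, and the trivial component $\C_e = \vect_{\mathbb{Z}_2}^{0}$ contains exactly two invertibles $\1$ and a non-trivial $\delta$ with $\delta\otimes\delta \cong \1$. With this dichotomy the three implications become almost bookkeeping; the only step that needs real verification is (3)$\Rightarrow$(1).

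For (1)$\Rightarrow$(2), I would take an Ising subcategory $\I\subseteq \C$ and let $X\in\I$ be its $\sqrt{2}$-dimensional self-dual simple object. Since $X$ is non-invertible, by the dichotomy above $X$ lies in some $\C_g$ with $\rk(\C_g)=1$. Self-duality gives $X^{*}=X\in \C_{g^{-1}}$, so $g=g^{-1}$; and $g\neq e$ since $\C_e$ is pointed. Hence $|g|=2$. For (2)$\Rightarrow$(3), pick $g$ as in (2); the unique simple object $X\in\C_g$ has $\FPdim(X)=\sqrt{2}$, its dual sits in $\C_{g^{-1}}=\C_g$, and that component has rank one, so $X^{*}\cong X$.

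The substantive implication is (3)$\Rightarrow$(1). Start with a self-dual non-invertible simple $X\in\C_g$. Since $X$ is non-invertible the whole of $\C_g$ is $\{X\}$, and self-duality forces $g=g^{-1}$, so $|g|=2$. Then $X\otimes X=X\otimes X^{*}\in\C_e=\vect_{\mathbb{Z}_2}^{0}$ has Frobenius--Perron dimension $2$ and so decomposes as a sum of invertibles from $\C_e$; since $\Hom(\1,X\otimes X^{*})$ is one-dimensional and the only other invertible in $\C_e$ is $\delta$, one obtains
\[
X\otimes X\cong \1\oplus\delta.
\]
Next, $\delta\otimes X\in \C_g$ is simple (as $\delta$ is invertible) and $\C_g$ has rank one, so $\delta\otimes X\cong X$; the same argument gives $X\otimes\delta\cong X$. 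These identities, together with $\delta\otimes\delta\cong\1$, show that the full subcategory $\I$ spanned by $\{\1,\delta,X\}$ is closed under tensor products and duals, hence is a fusion subcategory. It has $\FPdim(\I)=1+1+2=4$ and is not pointed, so by the definition recalled in Subsection~\ref{sec2.3} it is an Ising category.

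The main obstacle, such as it is, will be verifying that the fusion rules forced by the grading really assemble into the Ising pattern rather than into some other possibility; the key leverage is that the component $\C_g$ has rank $1$, which rigidly pins down $\delta\otimes X$ and thus forces the Ising fusion rules without needing extra input on associativity or braiding.
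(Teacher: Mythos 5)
Your proof is correct and follows essentially the same route as the paper's: the dichotomy on graded components from Theorem~\ref{General_TY_Cat} for (1)$\Rightarrow$(2) and (2)$\Rightarrow$(3), and for (3)$\Rightarrow$(1) the computation $X\otimes X^{*}\cong \1\oplus\delta$ showing that $X$ generates a non-pointed fusion subcategory of Frobenius--Perron dimension $4$, i.e.\ an Ising category. Your version is slightly more explicit in places (e.g.\ noting $g\neq e$ and pinning down the fusion rules of $\langle X\rangle$), but the argument is the same.
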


\begin{proof}
{\rm (1)}$\Rightarrow${\rm (2)}. Let $\I$ be an Ising subcategory of $\C$ and let $X$ be the unique non-invertible simple object of $\I$. Then there exists $g\in G$ such that $X\in \C_g$. Since $\FPdim(X)=\sqrt2$, the proof of Theorem \ref{General_TY_Cat} shows that $\rk(\C_g)=1$. Since $X$ is self-dual, we have $X^{*}\cong X$. On the other hand, $X^{*}\in \C_{g^{-1}}$ and $X\in \C_g$. Hence $\C_g=\C_{g^{-1}}$ which implies that $g=g^{-1}$. This proves that $|g|=2$.

{\rm (2)}$\Rightarrow${\rm (3)}. Let $X$ be the unique simple object of $\C_g$. Then $X^{*}\in \C_{g^{-1}}=\C_g$ because the order of $g$ is $2$. Since the rank of $\C_g$ is $1$, $X^{*}$ must be isomorphic to $X$. This proves that $X$ is self-dual.

{\rm (3)}$\Rightarrow${\rm (1)}. Let $X$ be a self-dual non-invertible simple object of $\C$. Then $X\otimes X^{*}\cong X\otimes X\cong \1\oplus \delta$, where we write $\Irr(\C_e)=\Irr(\C_{ad})=\{\1,\delta\}$. Hence $X$ generates a fusion subcategory $\C\langle X\rangle$ of $\C$ which is not pointed and has Frobenoius-Perron dimension $4$. Hence $\C\langle X\rangle$ is an Ising category, see Subsection \ref{sec2.3}.
\end{proof}

\begin{corollary}\label{Exist_Ising_2}
Let $\C=\oplus_{g\in G}\C_g$ be an extension of $\vect_{\mathbb{Z}_2}^{0}$. Using the same notation as in Theorem \ref{General_TY_Cat}, we assume that $\C$ is of type $(1,2n;\sqrt2,n)$. Then

(1)\, If $n$ is odd then $\C$ contains an Ising category.

(2)\, If $G$ is an elementary abelian $2$-group then $\C$  contains an Ising category.
\end{corollary}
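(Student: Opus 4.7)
The plan is to reduce both parts to Proposition \ref{Exist_Ising}: it suffices to produce an element $g\in G$ with $|g|=2$ and $\rk(\C_g)=1$, i.e.\ a component of the grading carrying a (necessarily self-dual) $\sqrt 2$-dimensional simple object.

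First I would identify the subgroup of $G$ controlling which components $\C_g$ are pointed. From the proof of Theorem \ref{General_TY_Cat}, $\C$ admits a faithful $\mathbb{Z}_2$-grading $\C=\C^0\oplus \C^1$ with $\C^0=\C_{pt}$, and by that theorem we also have $G=\mathcal{U}(\C)$. Combined with \cite[Corollary 3.7]{gelaki2008nilpotent}, this $\mathbb{Z}_2$-grading arises from a surjective group homomorphism $G\to \mathbb{Z}_2$, whose kernel $H$ indexes precisely the pointed components of $\C=\oplus_{g\in G}\C_g$. A simple rank count then pins down $|H|$: if $k$ of the $|G|=2n$ components have rank $2$ and the remaining $2n-k$ have rank $1$, then $2k+(2n-k)=3n$ forces $k=n$, so $|H|=n$ and the $\sqrt 2$-dimensional simple objects are distributed one per component over the nontrivial coset $G\setminus H$.

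With this setup the two parts become elementary group-theoretic statements. For (1), if $n$ is odd then $|G|=2n$ has a Sylow $2$-subgroup $P$ of order $2$; since $|P\cap H|$ divides both $2$ and $n$, it equals $1$, so the unique involution of $P$ lies in $G\setminus H$. Thus the corresponding component has rank $1$ and Proposition \ref{Exist_Ising} yields an Ising subcategory. For (2), if $G$ is elementary abelian of order $2n$ then every nontrivial element has order $2$, and $H\subsetneq G$ is proper, so any $g\in G\setminus H$ is an involution whose component $\C_g$ has rank $1$; again Proposition \ref{Exist_Ising} applies.

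I do not expect any serious obstacle: the only mildly subtle step is the identification of the subgroup $H$ with the kernel of the canonical map $G=\mathcal{U}(\C)\to \mathbb{Z}_2$ corresponding to the pointed-part grading, and the verification $|H|=n$, but both follow directly from Theorem \ref{General_TY_Cat} and a dimension count. Once that is in place, the rest is a one-line invocation of Cauchy's theorem in case (1) and a triviality in case (2).
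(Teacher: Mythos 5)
Your proof is correct; both you and the paper reduce the corollary to Proposition \ref{Exist_Ising}, but for part (1) you verify a different one of its equivalent conditions. The paper's proof of (1) is a one-line parity argument: duality $X\mapsto X^{*}$ is an involution on the $n$ non-invertible simple objects, so $n$ odd forces a self-dual one, which is condition (3) of Proposition \ref{Exist_Ising}. You instead verify condition (2): you first show that the set $H$ of $g\in G$ with $\C_g$ pointed is the kernel of the surjection $G=\U(\C)\to\mathbb{Z}_2$ coming from the grading with trivial component $\C_{pt}$ (so $H$ is a subgroup of index $2$, of order $n$, with the $\sqrt2$-dimensional simples sitting one per component over $G\setminus H$), and then locate an involution in $G\setminus H$ by Sylow/Cauchy since $\gcd(2,n)=1$. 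Both arguments are valid; yours costs the extra (but correct) structural step of identifying $H$ as a subgroup via \cite[Corollary 3.7]{gelaki2008nilpotent}, and in return makes explicit how the non-pointed components are distributed over the nontrivial coset, which the paper leaves implicit. For part (2) the two proofs coincide: every nontrivial element of an elementary abelian $2$-group is an involution, some component of rank $1$ exists because $\C$ is of type $(1,2n;\sqrt2,n)$ and $\C_e$ is pointed, and Proposition \ref{Exist_Ising} finishes.
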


\begin{proof}
The assumption that $n$ is odd implies that $\C$ contains a self-dual non-invertible simple object. The assumption that $G$ is an elementary abelian $2$-group implies that every element $g\in G$ is of order $2$. Therefore, in both cases, the result follows from Proposition \ref{Exist_Ising}.
\end{proof}

\begin{example}
Let $\C$ be a Deligne's tensor product of an Ising category $\I$ and a pointed fusion category $\B$. That is, $\C=\I\boxtimes \B$. Then the Frobenius-Perron dimension of every simple object of $\C$ is $1$ or $\sqrt2$. By Corollary \ref{necessary_sufficent}, $\C$ is an extension of $\vect_{\mathbb{Z}_2}^{0}$.

By \cite[Theorem 5.4]{natale2013faithful}, every non-degenerate generalized Tambara-Yamagami fusion subcategory $\C$ admits a decomposition $\I\boxtimes \B$, hence it is an extension of $\vect_{\mathbb{Z}_2}^{0}$.
\end{example}

\section{Extensions of a Yang-Lee category}\label{sec4}
\begin{lemma}\label{equations}
Let $a,b,c,d,e\geq 3$ be unknowns. Then

(1)\, Equation $$\cos^2\frac{\pi}{a}+\cos^2\frac{\pi}{b}=\frac{5+\sqrt{5}}{8}$$ has unique integral solutions $a=3,b=5$.

(2)\, Equation $$\cos^2\frac{\pi}{c}+\cos^2\frac{\pi}{d}+\cos^2\frac{\pi}{e}=\frac{5+\sqrt{5}}{8}$$ has no integral solutions.
\end{lemma}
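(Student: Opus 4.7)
The plan is to reduce each equation to a short case analysis by exploiting the strict monotonicity of the function $f(n) := \cos^2(\pi/n)$ and its explicit small values. The ingredients I need are: $f$ is strictly increasing for $n\geq 2$ with limit $1$; $f(3)=1/4$ and $f(4)=1/2$ are elementary, while $f(5)=(3+\sqrt{5})/8$ follows from the standard identity $\cos(\pi/5)=(1+\sqrt{5})/4$. In particular, the right-hand side admits the decomposition $(5+\sqrt{5})/8 = f(3)+f(5)$, which already supplies the advertised solution to (1) and will serve as the entry point for (2).

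For (1), I assume without loss of generality that $a\leq b$. Then $2f(a)\leq f(a)+f(b)=(5+\sqrt{5})/8$, so $f(a)\leq (5+\sqrt{5})/16 < 1/2 = f(4)$ (equivalently $\sqrt{5}<3$); monotonicity therefore forces $a=3$. Substituting gives $f(b) = (3+\sqrt{5})/8 = f(5)$, and a second appeal to strict monotonicity yields $b=5$ uniquely.

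For (2), assume $c\leq d\leq e$. The inequality $3f(c)\leq (5+\sqrt{5})/8 < 3/2 = 3f(4)$ forces $c=3$, and the equation reduces to $f(d)+f(e)=(3+\sqrt{5})/8$. The same trick applied to $d$ gives $2f(d)\leq (3+\sqrt{5})/8 < 1 = 2f(4)$, so $d=3$ as well. This leaves $f(e) = (1+\sqrt{5})/8$, and the crux of the whole argument is to show that this value cannot equal $f(n)$ for any integer $n\geq 3$. This is settled by the strict bracket $1/4 < (1+\sqrt{5})/8 < 1/2$ (equivalent to $1<\sqrt{5}<3$), which sandwiches the number strictly between $f(3)$ and $f(4)$, so monotonicity forbids such an integer and (2) has no integral solution.
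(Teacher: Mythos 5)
Your proof is correct and rests on the same key fact as the paper's: the strict monotonicity of $f(x)=\cos^2(\pi/x)$, which reduces everything to a small finite check (the paper bounds all variables by $9$ via the observation $f(10)=\tfrac{5+\sqrt{5}}{8}$ and then cites "an easy examination," whereas you order the variables and use $f(4)=\tfrac12$ as a threshold to pin each one down explicitly). Your version is a cleaner, fully written-out instance of the same argument.
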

\begin{proof}
The function $f(x)=\cos^2\frac{\pi}{x}$ $(x\geq 3)$ is an increasing function on $x$, and $f(10)=\frac{5+\sqrt{5}}{8}$. An easy examination from $x=3$ to $x=10$ proves the lemma.
\end{proof}

\begin{theorem}\label{Exten_YL_Cat}
Let $\C=\oplus_{g\in G}\C_g$ be an extension of a Yang-Lee category $\Y$. Then

(1)\, $\C$ is of type $(1,n;\frac{1+\sqrt5}{2},n)$, where $n=|G(\C)|$.

(2)\, For every $g\in G$, $\rk(\C_g)=2$. Write $\Irr(\C_g)=\{\delta_g,Y_g\}$. Then $\FPdim(\delta_g)=1$ and $\FPdim(Y_g)=\frac{1+\sqrt5}{2}$.

(3)\, $\Y=\C_{ad}$, $G=\U(\C)$ and the order of $\mathcal{U}(\C)$ is $n$.
\end{theorem}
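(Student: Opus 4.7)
The plan is to analyze the structure one graded component at a time. Since the grading is faithful, $\FPdim(\C_g)=\FPdim(\Y)=1+\phi^2=\frac{5+\sqrt{5}}{2}$ for every $g\in G$, where $\phi:=\frac{1+\sqrt{5}}{2}$. In particular, any simple $Z\in\C_g$ satisfies $\FPdim(Z)^2\leq\FPdim(\C_g)<4$, hence $\FPdim(Z)<2$. By the standard restriction on small Frobenius--Perron dimensions (Kronecker-type bound together with \cite[Remark 8.4, Proposition 8.27]{etingof2005fusion}), each such dimension must take the form $2\cos(\pi/n_Z)$ with $n_Z$ an integer $\geq 3$. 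Dividing the identity $\sum_{Z\in\Irr(\C_g)}\FPdim(Z)^2=\frac{5+\sqrt 5}{2}$ by $4$ yields
$$\sum_{Z\in\Irr(\C_g)}\cos^2\!\bigl(\pi/n_Z\bigr)=\frac{5+\sqrt 5}{8},$$
which is exactly the equation of Lemma \ref{equations}.

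The next step is to pin down $r:=\rk(\C_g)$. Since $\cos^2(\pi/n)\geq 1/4$ for $n\geq 3$, any rank $r\geq 4$ gives a left side $\geq 1>\frac{5+\sqrt 5}{8}$, impossible; rank $r=3$ is ruled out by Lemma \ref{equations}(2); and rank $r=2$ is forced by Lemma \ref{equations}(1) to have $\{n_1,n_2\}=\{3,5\}$, producing the two dimensions $2\cos(\pi/3)=1$ and $2\cos(\pi/5)=\phi$. The only remaining possibility is $r=1$ with a single simple $Z$ of dimension $2\cos(\pi/10)$, since $\cos^2(\pi/10)=\frac{5+\sqrt 5}{8}$. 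I would kill this case directly: letting $X\in\Y=\C_e$ denote the noninvertible Yang--Lee generator, the product $X\otimes Z$ again lies in $\C_g$, and as $Z$ is its only simple we must have $X\otimes Z\cong mZ$ for some positive integer $m$; taking $\FPdim$ gives $\phi=m\in\Zz$, a contradiction. Hence $\rk(\C_g)=2$, with one invertible $\delta_g$ and one simple $Y_g$ of dimension $\phi$, proving (2).

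For (1), each invertible simple of $\C$ lies in a unique component and is the unique invertible of that component, so the assignment $\delta\mapsto g$ with $\delta\in\C_g$ gives a bijection $G(\C)\to G$; writing $n:=|G(\C)|=|G|$, the type of $\C$ is $(1,n;\phi,n)$. For (3), the object $Y_g\otimes Y_g^{*}\in\C_e=\Y$ has Frobenius--Perron dimension $\phi^2=1+\phi$, and the only way to write $1+\phi$ as a sum of Frobenius--Perron dimensions of simples of $\Y$ is $\1\oplus X$; thus $X\in\C_{ad}$ and so $\C_{ad}=\Y$. The given grading then has trivial component exactly $\C_{ad}$, so by the universal property of the universal grading (\cite[Corollary 3.7]{gelaki2008nilpotent}) it coincides with the universal grading, yielding $G=\mathcal{U}(\C)$ and $|\mathcal{U}(\C)|=n$.

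The main obstacle is the rank-$1$ elimination: the coincidence $\cos^2(\pi/10)=\frac{5+\sqrt 5}{8}$ means Lemma \ref{equations} on its own does not exclude a single simple object of dimension $2\cos(\pi/10)$, and one has to exploit the fusion action of $\Y$ on $\C_g$ to extract the integrality contradiction. A secondary care point is justifying the $\FPdim(Z)=2\cos(\pi/n_Z)$ reduction for $\FPdim(Z)<2$ with an appropriate citation; once both are in place, everything else is bookkeeping.
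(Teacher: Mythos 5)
Your proposal is correct and follows essentially the same route as the paper: reduce to the equation of Lemma \ref{equations} via the $2\cos(\pi/n)$ form of small Frobenius--Perron dimensions, rule out rank $3$ (and higher) by that lemma, and kill the rank-$1$ case by the integrality contradiction from $X\otimes Z\cong mZ$, then deduce $\C_{ad}=\Y$ and $G=\U(\C)$. The only cosmetic differences are your explicit observation that $\cos^2(\pi/10)=\frac{5+\sqrt5}{8}$ forces the separate rank-$1$ argument, and your direct computation of $Y_g\otimes Y_g^*$ in place of the paper's appeal to $\Y$ having no proper fusion subcategory; both are sound.
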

\begin{proof}
Since  $\Y=\C_e$ and the grading is faithful, we have $\FPdim(\C_g)=\FPdim(\Y)=\frac{5+\sqrt5}{2}$, for all $g\in G$. This implies that $\rk(\C_g)\leq 3$ for all $g\in G$. Set $\Irr(\Y)=\{1,Y\}$ with $\FPdim(Y)=\frac{1+\sqrt5}{2}$ (see Subsection \ref{sec2.4}).

\medbreak
If there exists $e\neq g\in G$ such that $\rk(\C_g)=1$ then we set $\Irr(\C_g)=\{Y_g\}$. Then $Y\otimes Y_g \in \C_e\otimes \C_g\subseteq \C_g$. Since $Y_g$ is the unique (non-isomorphic) simple object in $\C_g$, we get that $Y\otimes Y_g\cong \FPdim(Y)Y_g$, which implies that $\FPdim(Y)$ is integral, a contradiction.

\medbreak
If there exists $e\neq g\in G$ such that $\rk(\C_g)=2$ then we set $\Irr(\C_g)=\{\delta_g,Y_g\}$. We may reorder $\delta_g,Y_g$ such that $\FPdim(\delta_g)\leq\FPdim(Y_g)$. Since $\FPdim(\Y)= \FPdim(\C_g)$, we have $\FPdim(\delta_g)^2+\FPdim(Y_g)^2=\frac{5+\sqrt5}{2}$, which implies that $\FPdim(\delta_g)<2$ and $\FPdim(Y_g)<2$. By \cite[Remark 8.4]{etingof2005fusion}, there exist integers $a,b\geq 3$ such that $\FPdim(\delta_g)=2\cos\frac{\pi}{a}$ and $\FPdim(Y_g)=2\cos\frac{\pi}{b}$. Hence we have equation  $4\cos^2\frac{\pi}{a}+4\cos^2\frac{\pi}{b}=\frac{5+\sqrt{5}}{2}$. Lemma \ref{equations} shows that $a=3$ and $b=5$. So $\FPdim(\delta_g)=1$ and $\FPdim(Y_g)=\frac{1+\sqrt{5}}{2}$.

\medbreak
If there exists $e\neq g\in G$ such that $\rk(\C_g)=3$ then we set $\Irr(\C_g)=\{X_g,Y_g,Z_g\}$. Similarly, we have an equation ($c,d,e\geq 3$ are integers):

$$4\cos^2\frac{\pi}{c}+4\cos^2\frac{\pi}{d}+4\cos^2\frac{\pi}{e}=\frac{5+\sqrt{5}}{2}.$$

Lemma \ref{equations} shows that this is impossible. Therefore, every component $\C_g$ is of rank $2$ and $\C$ is of type $(1,n;\frac{1+\sqrt5}{2},n)$, where $n=|G(\C)|$. This proves part (1) and part (2).

Since $\C_{ad}$ is a fusion subcategory of $\Y$ (see Subsection \ref{sec2.1}) and $\Y$ does not have proper fusion subcategory, we have $\C_{ad}=\vect$ or $\Y$. It is clear that $\C_{ad}$ can not be the trivial fusion category $\vect$, otherwise $\C$ is pointed. Hence $\C_{ad}=\Y$ and $G=\mathcal{U}(\C)$ has order $n$.
\end{proof}

In the rest of this section, we will keep notation as in the proof of Theorem \ref{Exten_YL_Cat}.

\begin{corollary}\label{fusionrules}
Let $\C=\oplus_{g\in G}\C_g$ be an extension of a Yang-Lee category $\Y$. Then the fusion rules of $\C$ are:
$$Y_g\otimes Y_h\cong \delta_{gh}\oplus Y_{gh},$$
$$\delta_g\otimes Y_h\cong Y_{gh},Y_h\otimes \delta_g\cong Y_{hg},\delta_g\otimes \delta_h\cong \delta_{gh}.$$
\end{corollary}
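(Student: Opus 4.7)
The plan is to read off every fusion rule from two facts established by Theorem \ref{Exten_YL_Cat}: the grading constraint $\C_g\otimes\C_h\subseteq\C_{gh}$, and the fact that each component $\C_g$ has exactly two simple objects, namely an invertible $\delta_g$ and one $Y_g$ with $\FPdim(Y_g)=\frac{1+\sqrt5}{2}$. Since these dimensions are the only ones occurring and each value is attained by a unique simple in each graded piece, the Frobenius-Perron dimension alone will usually pin down the decomposition.

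First I would handle $\delta_g\otimes\delta_h$: this is a tensor product of invertibles, hence invertible, and lies in $\C_{gh}$, so it must be the unique invertible of $\C_{gh}$, namely $\delta_{gh}$. Next, for $\delta_g\otimes Y_h$ (and symmetrically $Y_h\otimes\delta_g$): the product is a simple object (tensoring with an invertible preserves simplicity) of Frobenius-Perron dimension $\frac{1+\sqrt5}{2}$ lying in $\C_{gh}$ (resp.\ $\C_{hg}$), so it must equal $Y_{gh}$ (resp.\ $Y_{hg}$), the unique simple of that dimension in the appropriate component.

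The only rule requiring a short computation is $Y_g\otimes Y_h$. Its Frobenius-Perron dimension is
\[
\FPdim(Y_g)\FPdim(Y_h)=\left(\frac{1+\sqrt5}{2}\right)^2=\frac{3+\sqrt5}{2}=1+\frac{1+\sqrt5}{2}.
\]
Since this object lies in $\C_{gh}$, it is a non-negative integer combination of $\delta_{gh}$ and $Y_{gh}$, and the only such combination whose Frobenius-Perron dimension equals $\frac{3+\sqrt5}{2}$ is $\delta_{gh}\oplus Y_{gh}$ (because $\frac{1+\sqrt5}{2}$ is irrational, so the coefficient of $\delta_{gh}$ and the coefficient of $Y_{gh}$ are forced separately to be $1$ and $1$). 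This gives the last fusion rule.

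There is no real obstacle; the argument is entirely dimensional, relying on Theorem \ref{Exten_YL_Cat} to guarantee that each graded component has the minimal rank $2$ with the prescribed dimensions, and on the irrationality of $\frac{1+\sqrt5}{2}$ to enforce uniqueness of the decomposition of $Y_g\otimes Y_h$.
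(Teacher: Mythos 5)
Your proof is correct and follows essentially the same route as the paper: the invertible and mixed products are pinned down as simple objects in the appropriate graded component, and $Y_g\otimes Y_h$ is determined by a dimension count in the rank-$2$ component $\C_{gh}$. Your explicit use of the irrationality of $\frac{1+\sqrt5}{2}$ just makes precise the ``dimension counting'' the paper leaves implicit.
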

\begin{proof}
Since $Y_g\otimes Y_h$ is contained in $\C_{gh}$ and $\C_{gh}$ only contains two non-isomorphic simple objects, a dimension counting shows that $Y_g\otimes Y_h\cong \delta_{gh}\oplus Y_{gh}$. To prove the remained isomorphisms, it suffices to notice that they are simple objects and contained in $\C_{gh}$, $\C_{hg}$ and $\C_{gh}$, respectively.
\end{proof}

\begin{remark}
(1)\, The corollary above shows that the action  of the group $G(\C)$ by left(or right) tensor multiplication on the set $\Irr(\C)-G(\C)$ is transitive. More precisely, $Y_h\cong \delta_{hg^{-1}}\otimes Y_g$ for all $g,h\in G$.

(2)\, It follows from Corollary \ref{fusionrules} that the Grothendieck ring $K(\C)$ of $\C$ is commutative if and only if $G$ is commutative.
\end{remark}

It is easy to check that the map $f:\U(\C)\to G(\C)$ given by $f(g)=\delta_g$ is an isomorphism of groups, by Corollary \ref{fusionrules}. Hence we get the following corollary.

\begin{corollary}\label{universalGraGroup}
The universal grading group $\U(\C)$ is isomorphic to the group $G(\C)$.
\end{corollary}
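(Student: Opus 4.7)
The plan is to verify that the explicitly proposed map $f:\U(\C)\to G(\C)$ sending $g\mapsto [\delta_g]$ is a well-defined group isomorphism; essentially everything needed has been established in Theorem~\ref{Exten_YL_Cat} and Corollary~\ref{fusionrules}, so the argument reduces to four routine checks.

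First I would note that $f$ is well-defined, since Theorem~\ref{Exten_YL_Cat}(2) gives $\Irr(\C_g)=\{\delta_g,Y_g\}$ with $\delta_g$ the unique invertible object in the component $\C_g$. Next, the fusion rule $\delta_g\otimes\delta_h\cong\delta_{gh}$ from Corollary~\ref{fusionrules} immediately shows that $f$ is a group homomorphism, and this identity also forces $\delta_e=\1$, so $f$ sends the identity to the identity.

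For injectivity, suppose $f(g)=f(h)$, i.e.\ $\delta_g\cong\delta_h$ as objects of $\C$. Since the universal grading $\C=\oplus_{g\in\U(\C)}\C_g$ is a direct sum decomposition, a nonzero simple object belongs to a unique component, forcing $g=h$. For surjectivity, any invertible object of $\C$ is in particular a simple object, hence lies in exactly one component $\C_g$; but $\delta_g$ is the only invertible simple object in $\C_g$, so any such invertible object is isomorphic to $\delta_g=f(g)$. Alternatively, once injectivity is established one may invoke the equality $|\U(\C)|=n=|G(\C)|$ already recorded in Theorem~\ref{Exten_YL_Cat}(1),(3) to conclude.

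There is no real obstacle here; the only point that requires a moment's thought is the surjectivity step, where one must remember that every invertible object is simple and therefore lands in a single graded component, after which uniqueness of $\delta_g$ within $\C_g$ finishes the argument.
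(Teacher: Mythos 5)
Your proof is correct and is exactly the argument the paper intends: the paper likewise defines $f(g)=\delta_g$ and deduces that it is a group isomorphism from the fusion rule $\delta_g\otimes\delta_h\cong\delta_{gh}$ of Corollary~\ref{fusionrules}, leaving the routine checks (well-definedness, injectivity via the grading, surjectivity via uniqueness of the invertible object in each component) to the reader. You have simply spelled those checks out.
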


\begin{example}
Let $\C$ be a Deligne's tensor product of a Yang-Lee category $\Y$ and a pointed fusion category $\B$. It is easy to check that $\C$ admits a faithful $G(\B)$-grading and $\C_e$ is tensor equivalent to $\Y$.

When $n=2$, the fusion category $\C$ is self-dual and admits a $\mathbb{Z}_2$-grading. The Grothendieck ring $K_0(\C)$ is the unique rank $4$ graded self-dual fusion ring which is categorifiable. See \cite{Dong2017Non}.
\end{example}

\begin{corollary}\label{non-pointed_fusionsubcategory}
There is a 1-1 correspondence between the non-pointed fusion subcategories of $\C$ and the subgroups of the universal grading group $\mathcal{U}(\C)$. In particular, if $\mathcal{U}(\C)=\mathbb{Z}_p$ then $\C_{ad}=\Y$ is the unique non-pointed fusion subcategories of $\C$, where $p$ is a prime number.
\end{corollary}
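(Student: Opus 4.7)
The plan is to use the explicit description $\C=\oplus_{g\in\mathcal{U}(\C)}\C_g$ from Theorem \ref{Exten_YL_Cat} together with the fusion rules of Corollary \ref{fusionrules} to construct the bijection directly. Assign to each subgroup $H\leq\mathcal{U}(\C)$ the full abelian subcategory $\C_H:=\oplus_{g\in H}\C_g$, and to each non-pointed fusion subcategory $\D\subseteq\C$ the set $H(\D):=\{g\in\mathcal{U}(\C)\mid Y_g\in\D\}$. The goal is to show that both assignments are well defined and mutually inverse.

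First I would check that $\C_H$ is a non-pointed fusion subcategory. The fusion rules of Corollary \ref{fusionrules} are graded by $\mathcal{U}(\C)$, so closure of $H$ under multiplication yields closure of $\C_H$ under $\otimes$. Closure under duals follows because $Y_g^*\in\C_{g^{-1}}$ has Frobenius-Perron dimension $\frac{1+\sqrt{5}}{2}$ and must therefore coincide with the unique such simple object $Y_{g^{-1}}$; similarly $\delta_g^*=\delta_{g^{-1}}$. Since $Y=Y_e\in\C_H$ is non-invertible, $\C_H$ is non-pointed. In the other direction, $H(\D)$ is nonempty because $\D$ is non-pointed and the non-invertible simples of $\C$ are exactly the $Y_g$'s; it is closed under multiplication since $g,h\in H(\D)$ forces $Y_g\otimes Y_h\cong \delta_{gh}\oplus Y_{gh}\in\D$, and closed under inverses by the identity $Y_g^*=Y_{g^{-1}}$ just noted.

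The technical core is $\C_{H(\D)}=\D$. For $\C_{H(\D)}\subseteq\D$, fix $g\in H(\D)$: the relation $Y_g\otimes Y_{g^{-1}}\cong \1\oplus Y$ places $Y=Y_e$ inside $\D$, and then $Y_g\otimes Y\cong \delta_g\oplus Y_g$ places $\delta_g$ inside $\D$, so $\C_g\subseteq\D$ for every $g\in H(\D)$. For the reverse inclusion, every non-invertible simple of $\D$ is some $Y_g$ with $g\in H(\D)$ by definition; and if an invertible $\delta_h\in\D$, pick any $g\in H(\D)$ (possible since $H(\D)\neq\emptyset$) and note $\delta_h\otimes Y_g\cong Y_{hg}\in\D$, so $hg\in H(\D)$ and therefore $h\in H(\D)$. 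That $H(\C_H)=H$ is immediate from the definitions. For the final assertion, when $\mathcal{U}(\C)=\mathbb{Z}_p$ the only proper subgroup is the trivial one, which corresponds under the bijection to $\C_e=\C_{ad}=\Y$, so $\Y$ is the unique proper non-pointed fusion subcategory. The only delicate step is the recovery of the whole component $\C_g$ from the single object $Y_g\in\D$; once this is handled, the correspondence drops out of the graded fusion rules.
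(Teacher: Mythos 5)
Your proof is correct. Every step checks out: $\C_H=\oplus_{g\in H}\C_g$ is indeed a fusion subcategory when $H\leq\U(\C)$ is a subgroup, $H(\D)$ is a subgroup because $Y_g^*\cong Y_{g^{-1}}$ and $Y_g\otimes Y_h\cong\delta_{gh}\oplus Y_{gh}$, and the recovery of the full component $\C_g$ from $Y_g\in\D$ via $Y_g\otimes Y_{g^{-1}}\cong\1\oplus Y$ followed by $Y_g\otimes Y\cong\delta_g\oplus Y_g$ is exactly the delicate point and you handle it properly.

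The route is genuinely different from the paper's, though. The paper makes a single observation --- that for any non-invertible simple $X$ in a non-pointed subcategory $\D$ one has $X\otimes X^*\cong\1\oplus Y$, so every non-pointed fusion subcategory contains $\C_{ad}$ while no pointed one does --- and then invokes the general correspondence of \cite[Corollary 2.5]{drinfeld2010braided} between fusion subcategories containing $\C_{ad}$ and subgroups of $\U(\C)$. You instead build the bijection by hand from the explicit fusion rules of Corollary \ref{fusionrules}, verifying both compositions $\C_{H(\D)}=\D$ and $H(\C_H)=H$ directly. The two arguments pivot on the same key fact (a non-pointed $\D$ must swallow $Y$, hence all of $\C_{ad}$), but yours is self-contained and produces the explicit answer $\D=\oplus_{g\in H}\C_g$, whereas the paper's is shorter at the cost of outsourcing the combinatorics to a cited general theorem. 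One cosmetic remark: the statement as written says ``the unique non-pointed fusion subcategory'' without the word ``proper''; your reading (unique \emph{proper} one, the whole category $\C$ corresponding to the full group) is the intended one and worth stating explicitly, as you do.
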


\begin{proof}
Let $\D$ be a non-pointed fusion subcategory of $\C$. For every non-invertible simple object $X\in \D$, Lemma \ref{fusionrules} shows that
\begin{equation}
\begin{split}
X\otimes X^*=\1\oplus Y.
\end{split}\nonumber
\end{equation}
Hence $\C_{ad}$, generated by $\1$ and $Y$, is a fusion subcategory of $\D$. This shows that every non-pointed fusion subcategory of $\C$ contains $\C_{ad}$. On the other hand, any pointed fusion subcategory can not contain $\C_{ad}$. Therefore, the corollary follows from \cite[Corollary 2.5]{drinfeld2010braided}.
\end{proof}

\section{Classification of braided extensions of a rank $2$ fusion category}\label{sec5}
\subsection{Braided extensions of a Yang-Lee category}
\begin{theorem}\label{classification1}
Let $\C=\oplus_{g\in G}\C_g$ be an extension of a Yang-Lee category. Assume that $\C$ is braided. Then $\C\cong \C_{ad}\boxtimes \C_{pt}$, where $\C_{ad}$ is a Yang-Lee category. In this case, $\Z_2(\C)=\Z_2(\C_{pt})$.
\end{theorem}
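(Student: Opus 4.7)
The plan is to read off the theorem almost directly from the structural results already gathered: Theorem \ref{Exten_YL_Cat} tells us that $\C_{ad}=\Y$, and from Subsection \ref{sec2.4} we know a Yang--Lee category is non-degenerate. So the very subcategory we want to split off, $\C_{ad}$, is automatically non-degenerate, which is exactly the hypothesis under which M\"uger's decomposition theorem (Theorem \ref{MugerThm}) and our Proposition \ref{cad_non_deg} both apply.

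First I would dispense with the trivial case $G=\{e\}$: there $\C=\C_e=\Y$, $\C_{pt}=\vect$ (since $\Y$ is not pointed and has no proper fusion subcategories), and the statement $\C\cong\C_{ad}\boxtimes\C_{pt}$ together with $\Z_2(\C)=\Z_2(\C_{pt})$ holds tautologically. So assume the universal grading is non-trivial. Applying Theorem \ref{MugerThm} to the non-degenerate subcategory $\C_{ad}\subseteq\C$ yields a braided equivalence
\[
\C\;\cong\;\C_{ad}\boxtimes\C_{ad}',
\]
where $\C_{ad}'$ is the M\"uger centralizer of $\C_{ad}$ in $\C$.

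Next I would identify $\C_{ad}'$ with $\C_{pt}$. This is precisely part (1) of Proposition \ref{cad_non_deg}, whose hypotheses (braided, non-trivial universal grading, $\C_{ad}$ non-degenerate) are all met. That proposition also gives $\Z_2(\C)=\Z_2(\C_{ad}')=\Z_2(\C_{pt})$, taking care of the second assertion of the theorem. Alternatively, the Müger-center statement follows from the decomposition together with the standard fact $\Z_2(\A\boxtimes\B)=\Z_2(\A)\boxtimes\Z_2(\B)$, since $\Z_2(\C_{ad})=\vect$ by non-degeneracy of $\Y$.

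There is no real obstacle here; the only thing to be careful about is that everything hinges on the \emph{braided} hypothesis, which is used twice: once to invoke Theorem \ref{MugerThm}, and once (implicitly, via Proposition \ref{cad_non_deg}) to identify $\C_{ad}'$ with $\C_{pt}$. The non-degeneracy of the Yang--Lee category is doing all the heavy lifting, turning what could have been a delicate centralizer analysis into a one-line application of M\"uger's theorem.
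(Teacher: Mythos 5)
Your proof is correct and follows essentially the same route as the paper: both apply M\"uger's decomposition theorem (Theorem \ref{MugerThm}) to the non-degenerate Yang--Lee subcategory $\C_{ad}=\C_e=\Y$ and then identify its centralizer with $\C_{pt}$, deducing $\Z_2(\C)=\Z_2(\C_{pt})$ from Proposition \ref{cad_non_deg}. The only cosmetic difference is that the paper identifies the centralizer $\B=\Y'$ with $\C_{pt}$ by a Frobenius--Perron dimension count on simple objects of the Deligne product, whereas you invoke Proposition \ref{cad_non_deg}(1) directly (after correctly setting aside the trivial-grading case); both are valid.
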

\begin{proof}
Assume that $\Y:=\C_e$ is a Yang-Lee category. By \cite[Corollary 2.3]{ostrik2003fusion}, $\Y$ is non-degenerate, and hence $\C\cong \Y\boxtimes \B$ by Theorem \ref{MugerThm}, where $\B$ is the M\"{u}ger centralizer of $\Y$ in $\C$. Considering the Frobenius-Perron dimensions of simple objects of $\C$, we can conclude that $\B$ is pointed. Since $\FPdim(\B)=\FPdim(\C_{pt})$, we can get that $\B=\C_{pt}$. Finally, Theorem \ref{Exten_YL_Cat} shows that $\C_{ad}=\Y$. The result that $\Z_2(\C)=\Z_2(\C_{pt})$ is obtained from Proposition \ref{cad_non_deg}.
\end{proof}

\subsection{Braided extensions of $\vect_{\mathbb{Z}_2}^{0}$}

Throughout this section $\C$ will be an extension of  $\vect_{\mathbb{Z}_2}^{0}$ unless explicitly stated. In addition, we assume that $\C$ is braided and not pointed. An object of a fusion category is called trivial if it is isomorphic to $\1^{\oplus n}$ for some natural number $n$.

\medbreak
A simple object $X$ of a fusion category $\D$ is called \emph{faithful} if it generates $\D$ as a fusion category.
\medbreak

\begin{lemma}\label{faithful}
The universal grading group $\U(\C)$ is cyclic if and only if $\C$ has a faithful simple object.
\end{lemma}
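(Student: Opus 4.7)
The plan is to combine Theorem \ref{General_TY_Cat}, which gives $\C$ the structure of a generalized Tambara-Yamagami category of type $(1,2n;\sqrt{2},n)$ with $|\U(\C)|=|G(\C)|=2n$, together with the $\Zz_2$-grading $\C=\C_{pt}\oplus \C^{1}$ used in its proof. This grading corresponds to an index-two subgroup $H\subseteq \U(\C)$: the components $\C_h$ with $h\in H$ contain only invertible simples, while those with $h\notin H$ contain a single $\sqrt{2}$-dimensional simple. The universal grading gives a clean accounting of which simples live where, and the strategy is to locate a faithful simple object in a component indexed by a generator of $\U(\C)$.

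For the forward implication, suppose $\C$ has a faithful simple object $X$. Since $\C$ is not pointed, $X$ cannot be invertible, so $\FPdim(X)=\sqrt{2}$ and $X\in\C_g$ for some $g\in\U(\C)$. Every iterated tensor word in $X$ and $X^{*}$ has its simple summands in a component $\C_{g^{k}}$, so $\C\langle X\rangle$ is supported on the cyclic subgroup $\langle g\rangle\subseteq\U(\C)$. Faithfulness of $X$ then forces $\langle g\rangle=\U(\C)$, so $\U(\C)$ is cyclic.

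For the converse, let $g$ generate $\U(\C)$. Since $g$ projects to a generator of $\U(\C)/H\cong\Zz_2$, we have $g\notin H$, so $\C_g$ contains a unique $\sqrt{2}$-dimensional simple object $X$. The subgroup of $\U(\C)$ supporting $\C\langle X\rangle$ contains $g$, hence equals $\U(\C)$. For each $k$ with $g^{k}\notin H$, the intersection $\C\langle X\rangle\cap\C_{g^{k}}$ is non-zero and consists only of $\sqrt{2}$-dimensional objects; since $\C_{g^{k}}$ has exactly one such simple in $\C$, that simple must lie in $\C\langle X\rangle$. This places all $n$ non-invertible simples of $\C$ inside $\C\langle X\rangle$. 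Finally, for any invertible $a\in G(\C)$, the object $a\otimes X$ is a $\sqrt{2}$-dimensional simple and thus lies in $\C\langle X\rangle$ by the previous step. Consequently
$$(a\otimes X)\otimes X^{*}\cong a\oplus (a\otimes\delta)\in \C\langle X\rangle,$$
where $\delta$ denotes the non-trivial invertible generating $\C_{ad}=\vect_{\Zz_2}^{0}$; in particular $a\in\C\langle X\rangle$. Every simple of $\C$ therefore lies in $\C\langle X\rangle$, giving $\C\langle X\rangle=\C$ and establishing that $X$ is faithful.

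The only real obstacle is the final step of the converse: one must check that every invertible in $G(\C)$ is reached by $X$, even though invertibles live in components $\C_{h}$ with $h\in H$ that are not the direct image of a single tensor monomial in $X$ and $X^{*}$. This is resolved by combining the uniqueness of the $\sqrt{2}$-dimensional simple in each component $\C_{g^{k}}$ with $g^{k}\notin H$ (which forces these simples into $\C\langle X\rangle$) with the fusion rule $X\otimes X^{*}\cong \1\oplus\delta$ (which recovers each invertible as a summand of a product of two $\sqrt{2}$-dimensional objects already in $\C\langle X\rangle$). Everything else in the argument is bookkeeping against the type data supplied by Theorem \ref{General_TY_Cat}.
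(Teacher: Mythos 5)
Your proof is correct, but it takes a genuinely different route from the paper. The paper disposes of this lemma in one line: since $X\otimes X^{*}\subseteq\C_{pt}$ for every simple $X$, the category $\C$ is nilpotent of nilpotency class $2$, and the equivalence between cyclicity of $\U(\C)$ and the existence of a faithful simple object is then quoted from Natale's general results on faithful simple objects and gradings (\cite[Theorems 4.1 and 4.7]{natale2013faithful}). You instead give a self-contained argument tailored to the generalized Tambara--Yamagami structure from Theorem \ref{General_TY_Cat}: the forward direction via the observation that $\C\langle X\rangle$ is supported on $\langle g\rangle$ when $X\in\C_g$, and the converse by showing that the unique $\sqrt{2}$-dimensional simple in a generating component sweeps up first all non-invertible simples (uniqueness of the $\sqrt2$-simple in each component outside the index-two subgroup $H$) and then all invertibles (via $(a\otimes X)\otimes X^{*}\cong a\oplus(a\otimes\delta)$). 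All the steps check out: the support of a fusion subcategory under the universal grading is indeed a subgroup, a generator of a cyclic group cannot lie in a proper subgroup such as $H$, and $X\otimes X^{*}\cong\1\oplus\delta$ follows from $\FPdim(X\otimes X^{*})=2$ and $[X\otimes X^{*}:\1]=1$. What your approach buys is independence from the external reference and an explicit picture of which simples sit in which graded component (which the paper in fact redevelops separately inside the proof of the proposition that follows); what the paper's approach buys is brevity and the fact that Natale's theorems apply to arbitrary nilpotent categories of class $2$, not just this type.
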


\begin{proof}
Since $X\otimes X^*$ is contained in $\C_{pt}$, $\C$ is a nilpotent fusion category of nilpotency class $2$. This lemma then follows from \cite[Theorem 4.1 and Theorem 4.7]{natale2013faithful}.
\end{proof}

A braided fusion category is called \emph{prime} if contains no nontrivial proper non-degenerate fusion subcategory.
\begin{proposition}
Assume that $\U(\C)$ is cyclic. Then $\C$ is prime.
\end{proposition}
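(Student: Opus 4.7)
The plan is proof by contradiction. Assume $\D \subsetneq \C$ is a nontrivial proper non-degenerate fusion subcategory. By Theorem~\ref{MugerThm}, $\C \cong \D \boxtimes \D'$ with $\D'$ proper. Because adjoints respect Deligne products, $\C_{ad} = \D_{ad} \boxtimes \D'_{ad}$ has rank $2$, so exactly one of $\D_{ad}, \D'_{ad}$ is trivial. Without loss of generality $\D_{ad} = \vect$; hence $\D$ is pointed, $\D'_{ad} = \vect_{\mathbb{Z}_2}^{0}$, and Corollary~\ref{necessary_sufficent} together with Theorem~\ref{General_TY_Cat} identifies $\D'$ as a non-pointed generalized Tambara--Yamagami extension of $\vect_{\mathbb{Z}_2}^{0}$.

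By Lemma~\ref{faithful}, the cyclicity of $\U(\C)$ provides a faithful simple $X \in \C$, and non-pointedness of $\C$ forces $\FPdim(X) = \sqrt{2}$ via Theorem~\ref{General_TY_Cat}. In the decomposition write $X \cong Y \boxtimes Z$ with $Y \in \Irr(\D)$, $Z \in \Irr(\D')$. Inspecting tensor powers of $X$ in the Deligne product shows $\langle X \rangle \subseteq \langle Y \rangle \boxtimes \langle Z \rangle$, so faithfulness of $X$ forces $\langle Y \rangle = \D$ and $\langle Z \rangle = \D'$. Since $\FPdim(Y)\FPdim(Z) = \sqrt{2}$ and $\D$ is pointed, $Y$ is invertible and $Z$ has dimension $\sqrt{2}$; in particular $\D'$ inherits a faithful $\sqrt{2}$-dimensional simple object, and by Lemma~\ref{faithful} its universal grading $\U(\D')$ is also cyclic.

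The contradiction comes from the M\"uger center. Non-degeneracy of $\D$ gives $\Z_2(\C) = \Z_2(\D) \boxtimes \Z_2(\D') = \Z_2(\D')$, so $\Z_2(\C) \subseteq \D'$. Theorem~\ref{General_TY_Cat}(ii) yields $|G(\C)| = |\U(\C)|$, so Proposition~\ref{slig_degen}(1) applies: in the slightly degenerate situation relevant to the primeness claim, the fermion $f$ generating $\Z_2(\C)$ lies in $G(\D')$ and is distinct from the generator $\delta$ of $\C_{ad} \subseteq \D'$. This exhibits $\langle \delta, f \rangle \cong \mathbb{Z}_2 \times \mathbb{Z}_2$ inside $G(\D')$, so $G(\D')$ is non-cyclic. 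The main obstacle is now the converse direction, namely showing that cyclicity of $\U(\D')$ together with the faithful $\sqrt{2}$-dimensional object $Z$ forces $G(\D')$ itself to be cyclic; this rests on a careful analysis of how the invertibles distribute across the universal grading components of $\D'$ via the surjection $G(\D') \twoheadrightarrow \{g \in \U(\D') : (\D')_g \text{ is pointed}\}$ with kernel $\langle \delta \rangle$, combined with the structure given by Corollary~\ref{transitive} and Theorem~\ref{General_TY_Cat}. Once this cyclicity is in hand the contradiction closes and the proposition follows.
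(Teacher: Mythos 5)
Your proposal has a genuine gap: the argument is not completed, and the step you defer is the one that cannot be supplied. You reduce everything to showing that cyclicity of $\U(\D')$ forces $G(\D')$ to be cyclic, and you explicitly leave this as ``the main obstacle''. But that implication is false in exactly the situation you are analysing: for a slightly degenerate generalized Tambara--Yamagami category generated by a $\sqrt{2}$-dimensional simple object, $\U$ is cyclic (Lemma~\ref{faithful}) while $G$ contains $\langle\delta,f\rangle\cong\mathbb{Z}_2\times\mathbb{Z}_2$, where $\delta$ generates $\C_{ad}\cong\svect$ and $f$ generates $\Z_2(\C)=\svect$; these are distinct invertibles because $\delta\otimes X\cong X$ for every non-invertible simple $X$, whereas M\"uger's lemma forces $f\otimes X\ncong X$. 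So the contradiction you aim for ($G(\D')$ cyclic versus $\mathbb{Z}_2\times\mathbb{Z}_2\subseteq G(\D')$) is unreachable. Two further problems: your closing paragraph only engages the slightly degenerate case, although the hypothesis of the proposition places no restriction on $\Z_2(\C)$; and the ``without loss of generality $\D_{ad}=\vect$'' is not a genuine reduction, since $\D$ is fixed as the non-degenerate factor and the configuration in which $\D$ is the non-pointed factor must be treated separately.

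The paper's route is shorter and avoids the M\"uger-center analysis entirely. From the faithful simple object $X\in\C_a$, with $a$ a generator of $\U(\C)$, one computes $X^{\otimes t}\in\C_{a^t}$ to see that the components indexed by odd powers of $a$ have rank $1$, those indexed by even powers have rank $2$, and every non-invertible simple object is itself faithful; hence $\C$ has no proper non-pointed fusion subcategory at all. A nontrivial proper non-degenerate $\D$ would then be pointed, and $\C\cong\D\boxtimes\D'$ (Theorem~\ref{MugerThm}) would exhibit $\D'$ as a proper non-pointed subcategory --- an immediate contradiction. Your first paragraph already reaches the decomposition $\C\cong\D\boxtimes\D'$ with $\D$ pointed and $\D'$ proper and non-pointed, which is the contradiction once the ``no proper non-pointed subcategory'' statement is available; that statement is the missing heart of the argument, and it is what you should prove instead of the cyclicity claim about $G(\D')$.
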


\begin{proof}
By Theorem \ref{General_TY_Cat}, we may assume that $\C$ is of type $(1,2n;\sqrt2,n )$. By Lemma \ref{faithful}, $\C$ has a faithful simple object $X$. Then there exists $a\in \U(\C)$ such that $X\in \C_g$. By \cite[Theorem 4.1]{natale2013faithful}, $a$ is the generator of $\U(\C)$. An easy calculation (mainly using $X^{\otimes t}\in \C_{a^t}$) shows that the rank of $\C_{a^{2m-1}}$ is $1$ and the rank of $\C_{a^{2m}}$ is $2$. Moreover, it further shows that every non-invertible simple object is faithful. Notice that the calculation adopts the results from Theorem \ref{General_TY_Cat}: $\C$ is a generated Tambara-Yamagami fusion subcategory, and every component $\C_g$ either contains $2$ non-isomorphic invertible objects, or contains a unique $\sqrt 2$-dimensional simple object. Therefore, the calculation implies that $\C$ contains no proper non-pointed fusion subcategory.

Suppose on the contrary that $\C$ is not prime. Then $\C$ contains a nontrivial proper non-degenerate fusion subcategory $\D$. By the discussion above, $\D$ is pointed. By Theorem \ref{MugerThm}, $\C\cong \D\boxtimes \D'$. Since $\D$ is pointed and not trivial, $\D'$ must be a proper non-pointed fusion subcategory of $\C$. This is impossible by the discussion above.
\end{proof}

\begin{lemma}\label{cad}
The adjoint subcategory $\C_{ad}$ is braided tensor equivalent to $\svect$.
\end{lemma}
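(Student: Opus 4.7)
The plan is to first identify the two possibilities for $\C_{ad}$ up to braided tensor equivalence and then exclude the Tannakian alternative by a de-equivariantization argument.

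By Theorem~\ref{General_TY_Cat}(ii) the adjoint subcategory $\C_{ad}$ is tensor equivalent to $\vect_{\mathbb{Z}_{2}}^{0}$; in particular it is pointed, so $\C_{ad}\subseteq \C_{pt}$. Combining this containment with Proposition~\ref{Adjo_Point} gives
\[
\C_{ad}\subseteq \C_{pt}\cap (\C_{pt})'\subseteq (\C_{ad})',
\]
so $\C_{ad}$ is a symmetric fusion subcategory of $\C$. Since $\FPdim(\C_{ad})=2$, the classification recalled after Theorem~\ref{SymmCat} shows that $\C_{ad}$ is either Tannakian, hence $\Rep(\mathbb{Z}_{2})$, or equivalent to $\svect$. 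It suffices to rule out the first alternative.

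Fix a $\sqrt{2}$-dimensional simple $X\in\C$, which exists by Theorem~\ref{General_TY_Cat}(i) and the hypothesis that $\C$ is not pointed, and write $\delta$ for the non-trivial invertible in $\C_{ad}$. Because $X^{*}\otimes X$ lies in $\C_{ad}$, has Frobenius-Perron dimension $2$, and satisfies $\dim\Hom(\1,X^{*}\otimes X)=\dim\End(X)=1$, we must have $X^{*}\otimes X\cong \1\oplus\delta$. Frobenius reciprocity then yields $\Hom(\delta\otimes X,X)\cong \Hom(\delta,X^{*}\otimes X)\neq 0$, and since $\delta\otimes X$ and $X$ are both simple of Frobenius-Perron dimension $\sqrt{2}$, we conclude $\delta\otimes X\cong X$.

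Suppose for contradiction that $\C_{ad}\cong\Rep(\mathbb{Z}_{2})$ is Tannakian. Using the canonical braided embedding $\C\hookrightarrow\mathcal{Z}(\C)$, we may de-equivariantize $\C$ by $\C_{ad}$ to obtain a fusion category $\C_{\mathbb{Z}_{2}}$ with $\FPdim(\C_{\mathbb{Z}_{2}})=\FPdim(\C)/2$. The reconstruction $(\C_{\mathbb{Z}_{2}})^{\mathbb{Z}_{2}}\cong\C$ of Subsection~\ref{sec23} describes the simples of $\C$ as follows: a free $\mathbb{Z}_{2}$-orbit $\{Y,\delta\cdot Y\}\subseteq\Irr(\C_{\mathbb{Z}_{2}})$ contributes a single simple of $\C$ of Frobenius-Perron dimension $2\FPdim_{\C_{\mathbb{Z}_{2}}}(Y)$ that is fixed by $\delta\otimes(-)$, while a $\mathbb{Z}_{2}$-stable simple $Y\in\C_{\mathbb{Z}_{2}}$ lifts to two non-isomorphic simples of $\C$ of Frobenius-Perron dimension $\FPdim_{\C_{\mathbb{Z}_{2}}}(Y)$ that are interchanged by $\delta\otimes(-)$. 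Because $\delta\otimes X\cong X$, our $X$ belongs to the first case, and the corresponding simple $Y\in\C_{\mathbb{Z}_{2}}$ satisfies $\FPdim(Y)=\sqrt{2}/2=1/\sqrt{2}<1$. This contradicts the standard fact that every nonzero object of a fusion category has Frobenius-Perron dimension at least $1$ (cf.\ \cite[Remark~8.4]{etingof2005fusion}). Hence $\C_{ad}\cong\svect$.

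The main technical subtlety will be the correct matching of the dichotomy ``$\delta\otimes X\cong X$ versus $\delta\otimes X\not\cong X$'' on the level of $\C$ with the dichotomy ``free $\mathbb{Z}_{2}$-orbit versus $\mathbb{Z}_{2}$-fixed simple'' on the level of $\C_{\mathbb{Z}_{2}}$; the character-theoretic description of simple objects of an equivariantization, together with the fact that $\delta$ corresponds to the non-trivial character of $\mathbb{Z}_{2}$, makes this matching precise and drives the final numerical contradiction.
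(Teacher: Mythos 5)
Your proof is correct and follows the same overall strategy as the paper: show $\C_{ad}$ is a symmetric pointed category of dimension $2$, assume it is Tannakian, de-equivariantize by $\Rep(\mathbb{Z}_2)$, and derive a contradiction from a $\sqrt 2$-dimensional simple object. Two remarks on where you diverge. First, you prove symmetry of $\C_{ad}$ directly from Proposition~\ref{Adjo_Point} via $\C_{ad}\subseteq(\C_{pt})'\subseteq(\C_{ad})'$, which is a nice self-contained replacement for the paper's citation of an external lemma. Second, your final contradiction runs through the orbit/stabilizer parametrization of $\Irr(\C)$ over $\Irr(\C_{\mathbb{Z}_2})$: you use that $\delta\otimes X\cong X$ forces $X$ to lie over a free orbit, hence $\FPdim$ of the underlying simple is $\sqrt2/2<1$. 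That matching (free orbit $\leftrightarrow$ $\delta$-fixed simple of double dimension; stabilized simple $\leftrightarrow$ a $\delta$-swapped pair) is a standard fact about equivariantizations by $\mathbb{Z}_2$ and is correct, but you assert it without proof or citation and yourself flag it as the ``main technical subtlety''; to make the argument airtight you should cite it (e.g.\ the description of fusion rules of equivariantizations in \cite{burciu2013fusion}). The paper's version of this last step is more elementary and avoids the parametrization entirely: it applies the de-equivariantization functor $F$ to $X\otimes X^*\cong\1\oplus\delta$ to get $F(X)\otimes F(X)^*\cong\1\oplus\1$, observes that two copies of $\1$ rule out $F(X)$ being simple, while $F(X)$ being decomposable would force at least four simple summands. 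Either route closes the argument; yours trades a counting argument for a structural fact plus the bound $\FPdim\geq 1$.
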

\begin{proof}
By Theorem \ref{General_TY_Cat}, we know $\C_{ad}\cong \vect_{\mathbb{Z}_2}^{0}$ is pointed. By \cite[Lemma 2.4]{2016GonNatale}, we have $\C_{ad}$ is symmetric.  Suppose on the contrary that $\C_{ad}$ is Tannakian. Then $\C_{ad}\cong \Rep(\mathbb{Z}_2)$ as braided fusion categories. By \cite[Proposition 2.10]{etingof2011weakly} and \cite[Theorem 4.18(i)]{drinfeld2010braided}, $\C$ is a $\mathbb{Z}_2$-equivariantization of a fusion category $\C_{\mathbb{Z}_2}$ which is called  the de-equivariantization of $\C$ by $\Rep(\mathbb{Z}_2)$.

The forgetful functor $F:\C\rightarrow \C_{\mathbb{Z}_2}$ is a tensor functor and the image of every object in $\C_{ad}$ under $F$ is a trivial object of $\C_{\mathbb{Z}_2}$. Let $\delta$ be the unique non-trivial simple object of $\C_{ad}$. If $X$ is a non-invertible simple object of $\C$ then $X\otimes X^{*}\cong \1\oplus\delta$. Hence $F(X\otimes X^{*})\cong F(X)\otimes F(X)^{*}\cong \1\oplus \1$, which implies that $F(X)$ is not simple. On the other hand, if  $F(X)$ is not simple then the decomposition of $F(X)\otimes F(X)^{*}$ should contain at least four simple summands. This contradiction shows that the assumption is false, and hence $\C_{ad}\cong \svect$.
\end{proof}

\begin{lemma}\label{cpt}
Assume that the largest pointed fusion subcategory $\C_{pt}$ of $\C$ is slightly degenerate. Then $\C$ contains an Ising category. In particular, $\C$ is non-degenerate.
\end{lemma}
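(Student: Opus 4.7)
The plan is to first show that $\C$ itself is non-degenerate, and then invoke Natale's classification of non-degenerate generalized Tambara-Yamagami categories to split off an Ising factor; the ``in particular'' clause then comes for free from the first step rather than as a consequence of the second.

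First I would locate $\C_{ad}$ inside $\C_{pt}$. Lemma~\ref{cad} gives $\C_{ad}\cong\svect$, and since $\C_{ad}$ is pointed it sits in $\C_{pt}$, while Proposition~\ref{Adjo_Point} places it inside $(\C_{pt})'$. Hence $\C_{ad}\subseteq\C_{pt}\cap(\C_{pt})'=\Z_2(\C_{pt})$, and the hypothesis that $\Z_2(\C_{pt})\cong\svect$ has Frobenius-Perron dimension $2$ forces $\C_{ad}=\Z_2(\C_{pt})$.

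Next I would pin down $\Z_2(\C)$. Since $\C$ is a non-pointed extension of $\vect_{\mathbb{Z}_2}^{0}$, Theorem~\ref{General_TY_Cat} gives that every simple object of $\C$ has Frobenius-Perron dimension $1$ or $\sqrt{2}$ and that $|G(\C)|=|\U(\C)|=2n$. By Deligne's theorem every symmetric fusion subcategory of $\C$ has integer Frobenius-Perron dimensions, so $\Z_2(\C)\subseteq\C_{pt}$; combined with the automatic inclusion $\Z_2(\C)\subseteq(\C_{pt})'$ this gives $\Z_2(\C)\subseteq\Z_2(\C_{pt})=\C_{ad}$. If $\C$ were slightly degenerate, the dichotomy of Proposition~\ref{slig_degen} would force either $\Z_2(\C)\nsubseteq\C_{ad}$ (case~(1)) or $|G(\C)|=2|\U(\C)|$ (case~(2)), both contradicting what has just been established. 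Therefore $\Z_2(\C)=\vect$ and $\C$ is non-degenerate, which already disposes of the ``in particular'' assertion.

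Finally, since $\C$ is a generalized Tambara-Yamagami fusion category by Theorem~\ref{General_TY_Cat} and non-degenerate by the previous step, \cite[Theorem~5.4]{natale2013faithful} (already cited in the preceding example) yields a Deligne decomposition $\C\cong\I\boxtimes\B$ with $\I$ an Ising category and $\B$ a pointed fusion category; the factor $\I$ provides the desired Ising subcategory. The main obstacle is the second step, where the two constraints on $\Z_2(\C)$---integrality of dimensions and centralization of $\C_{pt}$---must be combined to trap $\Z_2(\C)$ inside the two-dimensional $\Z_2(\C_{pt})$, so that Proposition~\ref{slig_degen} together with the numerical data $|G(\C)|=|\U(\C)|$ from Theorem~\ref{General_TY_Cat} rules out slight degeneracy; once non-degeneracy is in hand, Natale's theorem does the remaining work mechanically.
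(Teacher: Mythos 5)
Your proof is correct, but it takes a genuinely different route from the paper's, essentially running the argument in the opposite order. The paper does not first establish non-degeneracy of $\C$: it invokes \cite[Proposition 2.6(ii)]{etingof2011weakly} to write $\C_{pt}\cong\svect\boxtimes\mathcal{B}$ with $\mathcal{B}$ pointed and non-degenerate of dimension $n$, applies Theorem \ref{MugerThm} to get $\C\cong\mathcal{B}\boxtimes\mathcal{B}'$, observes that $\mathcal{B}'$ has Frobenius-Perron dimension $4$ and is not pointed, hence is an Ising category, and only then deduces non-degeneracy of $\C$ from the non-degeneracy of the Ising factor via \cite[Corollary B.12]{drinfeld2010braided}. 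You instead trap $\mathcal{Z}_2(\C)$ inside $\mathcal{Z}_2(\C_{pt})\cong\svect$ (using integrality of symmetric categories together with Proposition \ref{Adjo_Point}), rule out $\mathcal{Z}_2(\C)=\svect$ by playing Proposition \ref{slig_degen} against the equality $|G(\C)|=|\U(\C)|=2n$ from Theorem \ref{General_TY_Cat}, and then outsource the existence of the Ising subcategory to \cite[Theorem 5.4]{natale2013faithful} for non-degenerate generalized Tambara-Yamagami categories. Both arguments are valid: the paper's is more self-contained and exhibits the Ising subcategory explicitly as the centralizer $\mathcal{B}'$, whereas yours obtains the ``in particular'' clause up front by a purely numerical argument and then relies on Natale's structure theorem, a result the paper already uses elsewhere (e.g.\ in the proof of Theorem \ref{classification2}).
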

\begin{proof}
By \cite[Proposition 2.6(ii)]{etingof2011weakly}, $\C_{pt}\cong \svect\boxtimes \mathcal{B}$ as braided tensor categories, where $\mathcal{B}$ is a pointed non-degenerate fusion category. In particular $\FPdim(\mathcal{B})=n$ since $\C$ is of type $(1,2n;\sqrt2,n )$ (see Theorem \ref{General_TY_Cat}). Let $\mathcal{B}^{'}$ be the centralizer of $\mathcal{B}$ in $\C$. By Theorem \ref{MugerThm}, $\C\cong \mathcal{B}\boxtimes \mathcal{B}^{'}$ as braided fusion categories. In particular, $\C$ is non-degenerate if and only if $B^{'}$ is. Counting dimensions on both sides, we get that $\FPdim(\mathcal{B}^{'})=4$. Since $\mathcal{B}^{'}$ is not pointed, $\mathcal{B}^{'}$ is an Ising category. By \cite[Corollary B.12]{drinfeld2010braided}, $\mathcal{B}^{'}$ is non-degenerate and hence $\C$ is also non-degenerate.
\end{proof}

Let $\D$ be a fusion category with commutative Grothendieck ring, and let $\A$ be a fusion subcategory of $\D$. The \emph{commutator} of $\A$ in $\D$ is denoted by $\A^{co}$; that is, $\A^{co}$ is the fusion subcategory of $\D$ generated by all simple objects $X$ of $\D$ such that $X\otimes X^*$ is contained in $\A$ \cite{gelaki2008nilpotent}.

\begin{lemma}\label{cad_cpt}
The adjoint subcategory $\C_{ad}$, the largest pointed fusion subcategory $\C_{pt}$ and the M\"{u}ger center $\mathcal{Z}_2(\C)$ have the following relations.

(1)\, $\C_{ad}^{'}=\C_{pt}$ and $\mathcal{Z}_2(\C)\subseteq \C_{pt}$.

(2)\, $\mathcal{Z}_2(\C_{pt})=\C_{ad}\vee \mathcal{Z}_2(\C).$
\end{lemma}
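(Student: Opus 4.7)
The plan is to derive both assertions from the interaction of the dimension identity \cite[Theorem 3.14]{drinfeld2010braided} with the braided identification $\C_{ad}\simeq \svect$ furnished by Lemma \ref{cad}. Part (2) will follow formally from Part (1) via the double-centralizer identity in Lemma \ref{double_centralzer}, so the substance is in establishing Part (1).

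For Part (1), I would apply \cite[Theorem 3.14]{drinfeld2010braided} to $\C_{ad}$ to get
$$\FPdim(\C_{ad})\,\FPdim(\C_{ad}')=\FPdim(\C)\,\FPdim(\C_{ad}\cap \mathcal{Z}_2(\C)).$$
Since $\C_{ad}\simeq \svect$, the intersection $\C_{ad}\cap \mathcal{Z}_2(\C)$ is either $\vect$ or the whole $\C_{ad}$, and the key step is to rule out the second possibility. Suppose $\C_{ad}\subseteq \mathcal{Z}_2(\C)$. Then the non-trivial simple $\delta\in \C_{ad}$ is a fermion sitting inside $\mathcal{Z}_2(\C)$, so \cite[Lemma 5.4]{muger2000galois} forces $\delta\otimes X\not\cong X$ for every simple $X\in \C$. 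However, for any non-invertible simple $X$ of $\C$ one has $X\otimes X^*\cong \1\oplus\delta$, because $X\otimes X^*\in \C_{ad}$ has Frobenius-Perron dimension $2$, contains $\1$ with multiplicity one, and $\C_{ad}$ has only the two simples $\1,\delta$. By Frobenius reciprocity $\dim\Hom(\delta\otimes X,X)=\dim\Hom(\delta,X\otimes X^*)=1$, which forces $\delta\otimes X\cong X$, a contradiction. Hence $\C_{ad}\cap \mathcal{Z}_2(\C)=\vect$, and together with the values $\FPdim(\C)=4n$ and $\FPdim(\C_{pt})=2n$ from Theorem \ref{General_TY_Cat} the displayed identity gives $\FPdim(\C_{ad}')=\FPdim(\C_{pt})$. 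Since Proposition \ref{Adjo_Point} supplies $\C_{ad}\subseteq (\C_{pt})'$, taking centralizers yields $\C_{pt}\subseteq \C_{ad}'$, and the dimensions match, so $\C_{ad}'=\C_{pt}$. The inclusion $\mathcal{Z}_2(\C)\subseteq \C_{ad}'=\C_{pt}$ is then immediate.

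For Part (2), centralizing the equality $\C_{ad}'=\C_{pt}$ gives $(\C_{pt})'=\C_{ad}''$, which by Lemma \ref{double_centralzer} equals $\C_{ad}\vee \mathcal{Z}_2(\C)$. Therefore
$$\mathcal{Z}_2(\C_{pt})=\C_{pt}\cap (\C_{pt})'=\C_{pt}\cap\bigl(\C_{ad}\vee \mathcal{Z}_2(\C)\bigr).$$
Because $\C_{ad}\simeq \svect$ is pointed, $\C_{ad}\subseteq \C_{pt}$, and $\mathcal{Z}_2(\C)\subseteq \C_{pt}$ by Part (1); hence the join $\C_{ad}\vee \mathcal{Z}_2(\C)$ already lies in $\C_{pt}$, so the intersection equals the join, giving the desired identity.

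The main obstacle is ruling out the alternative $\C_{ad}\subseteq \mathcal{Z}_2(\C)$ in Part (1): without the generalized Tambara-Yamagami fusion rule $X\otimes X^*\cong \1\oplus\delta$ supplying the equivariance $\delta\otimes X\cong X$, one could not contradict the fermionic vanishing of \cite[Lemma 5.4]{muger2000galois}, and both the equality $\C_{ad}'=\C_{pt}$ and the containment $\mathcal{Z}_2(\C)\subseteq \C_{pt}$ would fail. Everything else is routine centralizer bookkeeping.
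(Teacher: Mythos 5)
Your proof is correct, and the decisive step is the same as in the paper: both arguments rule out $\delta\in\mathcal{Z}_2(\C)$ by playing the isomorphism $\delta\otimes X\cong X$ (forced by $X\otimes X^*\cong\1\oplus\delta$ and Frobenius reciprocity) against M\"uger's fermion lemma \cite[Lemma 5.4]{muger2000galois}, which requires the braided identification $\C_{ad}\cong\svect$ from Lemma \ref{cad}. Where you diverge is in how the equality $\C_{ad}'=\C_{pt}$ is then extracted. The paper uses the commutator characterization \cite[Proposition 3.25]{drinfeld2010braided}, namely $\C_{ad}'=\mathcal{Z}_2(\C)^{co}$, so that the fermion obstruction directly shows every simple object of $\C_{ad}'$ is invertible, i.e.\ $\C_{ad}'\subseteq\C_{pt}$; combined with $\C_{ad}'\supseteq(\C_{pt})''=\C_{pt}\vee\mathcal{Z}_2(\C)\supseteq\C_{pt}$ this yields both conclusions of Part (1) at once from a single chain of inclusions. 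You instead use the fermion obstruction only to compute $\C_{ad}\cap\mathcal{Z}_2(\C)=\vect$, feed this into the dimension identity \cite[Theorem 3.14]{drinfeld2010braided} together with the type $(1,2n;\sqrt2,n)$ from Theorem \ref{General_TY_Cat} to get $\FPdim(\C_{ad}')=2n=\FPdim(\C_{pt})$, and conclude by combining with the inclusion $\C_{pt}\subseteq\C_{ad}'$ (which, as you implicitly use, follows from the symmetry of the centralizing relation applied to Proposition \ref{Adjo_Point}). Your route buys independence from Proposition 3.25 at the cost of invoking the explicit dimension count of Theorem \ref{General_TY_Cat}; the paper's route is dimension-free. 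For Part (2) the arguments coincide except that you replace the appeal to \cite[Lemma 5.6]{drinfeld2010braided} with the simpler observation that $\C_{ad}\vee\mathcal{Z}_2(\C)$ already sits inside $\C_{pt}$, which is cleaner and equally valid.
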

\begin{proof}
(1)\,  By \cite[Proposition 3.25]{drinfeld2010braided}, a simple object $X\in \C$ belongs to $\C_{ad}'$ if and only if it belongs to $\mathcal{Z}_2(\C)^{co}$; that is, if and only if $X\otimes X^{*}\in \mathcal{Z}_2(\C)$. If $X$ is not invertible then $X\otimes X^{*}\cong \1\oplus \delta$ and hence $\delta\otimes X\cong X$, where $\delta$ is unique non-trivial simple object of $\C_{ad}\cong \svect$ by Lemma \ref{cad}. This implies that $\svect\subseteq \mathcal{Z}_2(\C)$. This is impossible by \cite[Lemma 5.4]{muger2000galois} which says that if $\svect\subseteq \mathcal{Z}_2(\C)$ then $\delta \otimes Y\ncong Y$ for any $Y\in \C$. Therefore, $\C_{ad}^{'}\subseteq \C_{pt}$ is pointed. By Proposition \ref{Adjo_Point}, $\C_{ad}^{'}\supseteq \C_{pt}^{''}=\C_{pt}\vee \mathcal{Z}_2(\C)$. Hence we have
$$\C_{pt}\supseteq \C_{ad}^{'}\supseteq\C_{pt}\vee \mathcal{Z}_2(\C)\supseteq \C_{pt}.$$

This shows that $\C_{ad}^{'}=\C_{pt}$ and $\mathcal{Z}_2(\C)\subseteq \C_{pt}$.

(2) By Part (1), we have
\begin{equation}
\begin{split}
&\mathcal{Z}_2(\C_{pt})=\C_{pt}\cap \C_{pt}^{'}=\C_{pt}\cap \C_{ad}^{''}\\
&=\C_{pt}\cap (\C_{ad}\vee \mathcal{Z}_2(\C))=\C_{ad}\vee \mathcal{Z}_2(\C).
\end{split}\nonumber
\end{equation}

The last equality follows from \cite[Lemma 5.6]{drinfeld2010braided} and the fact that $\C$ is braided. Together with Lemma \ref{cad}, we get $\mathcal{Z}_2(\C_{pt})=\svect\vee \mathcal{Z}_2(\C)$.
\end{proof}

\begin{lemma}\label{Exist_Isingnew}
Suppose that $\FPdim(\C)=8$. Then the fusion category $\C$ contains an Ising category.
\end{lemma}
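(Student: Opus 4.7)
The plan is to analyze $\C$ through its universal grading group. By Theorem~\ref{General_TY_Cat}, the hypothesis $\FPdim(\C)=8$ gives $n=2$, so $\C$ is of type $(1,4;\sqrt{2},2)$ with $|\U(\C)|=4$. Since $\C$ is braided, the grading components must satisfy $\C_g\otimes\C_h$ and $\C_h\otimes\C_g$ both land in the same component, forcing $\U(\C)$ to be abelian; hence $\U(\C)\cong\mathbb{Z}_2\times\mathbb{Z}_2$ or $\U(\C)\cong\mathbb{Z}_4$.

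In the elementary abelian case, Corollary~\ref{Exist_Ising_2}(2) immediately produces an Ising subcategory. The bulk of the proof is to exclude the cyclic case $\U(\C)\cong\mathbb{Z}_4=\langle a\rangle$. Under this hypothesis, the proposition immediately preceding this lemma forces $\C$ to be prime; in particular, $\C$ contains no Ising subcategory, since such a subcategory would be proper (of Frobenius--Perron dimension $4<8$) and non-degenerate, violating primality. My strategy is to derive a contradiction by nevertheless exhibiting an Ising subcategory via Lemma~\ref{cpt}.

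By Lemma~\ref{cad_cpt}(2), $\Z_2(\C_{pt})=\svect\vee\Z_2(\C)$ inside the dimension-$4$ category $\C_{pt}$, so slight degeneracy of $\C_{pt}$ is equivalent to $\Z_2(\C)\subseteq\svect=\C_{ad}$. The proof of the preceding proposition shows that $\C_a$ has rank $1$ with unique simple object $X$; hence $\delta\otimes X\in\C_a$ forces $\delta\otimes X\cong X$, and M\"uger's Lemma \cite[Lemma~5.4]{muger2000galois} yields $\delta\notin\Z_2(\C)$, so that $\Z_2(\C)\cap\C_{ad}=\vect$. The main obstacle is then to exclude any non-trivial invertible $\eta\in\Z_2(\C)\cap\C_{a^2}$: such an $\eta$ must satisfy $\eta^2\in\{1,\delta\}$ and $\eta\otimes X\cong X^*$. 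I plan to contradict this either by applying M\"uger's Lemma directly to $\eta$ when it generates a copy of $\svect$ inside $\Z_2(\C)$, or, when $\eta$ generates a Tannakian $\Rep(\mathbb{Z}_2)\subseteq\Z_2(\C)$, by de-equivariantizing $\C$ by this Tannakian subcategory and checking that the resulting dimension-$4$ fusion category is incompatible with the $\mathbb{Z}_4$-grading of $\C$.
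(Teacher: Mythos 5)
Your route differs from the paper's: you case-split on $\U(\C)\cong\mathbb{Z}_2\times\mathbb{Z}_2$ versus $\mathbb{Z}_4$ and, in the cyclic case, aim for a contradiction with primality, whereas the paper never looks at the grading group at all --- it observes that $\Z_2(\C_{pt})=\svect\vee\Z_2(\C)$ sits inside the $4$-dimensional category $\C_{pt}$, hence has Frobenius--Perron dimension $2$ or $4$, and handles the two cases by Lemma~\ref{cpt} and by de-equivariantization along a Tannakian subcategory of the symmetric category $\C_{pt}$. Your elementary abelian branch (via Corollary~\ref{Exist_Ising_2}) is fine, as are the reductions $\Z_2(\C)\cap\C_{ad}=\vect$ and $\eta\otimes X\cong X^{*}$, and the case $\eta^{\otimes 2}\cong\delta$ that you leave implicit is easily killed ($\delta$ would then lie in $\Z_2(\C)$).

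The genuine gap is the sub-case where $\eta\in\Z_2(\C)\cap\C_{a^2}$ generates a copy of $\svect$. M\"uger's Lemma says $\eta\otimes Y\ncong Y$ for every simple $Y$, but in your situation this holds for \emph{every} simple object: $\eta\otimes X\cong X^{*}$ with $X^{*}\in\C_{a^{3}}\neq\C_{a}$, so $X^{*}\ncong X$, and $\eta$ also acts freely on the four invertibles. So M\"uger's Lemma produces no contradiction, and this is exactly the dangerous configuration (a slightly degenerate, $\mathbb{Z}_4$-graded, prime candidate of dimension $8$); without excluding it the lemma is unproved. A new input is needed. The paper's: here $\Z_2(\C_{pt})=\langle\delta\rangle\vee\langle\eta\rangle=\C_{pt}$ is symmetric of dimension $4$, hence by Theorem~\ref{SymmCat} contains a non-trivial Tannakian subcategory --- concretely $\langle\delta\otimes\eta\rangle\cong\Rep(\mathbb{Z}_2)$, since the self-braiding is multiplicative on invertibles and equals $-1$ on both $\delta$ and $\eta$ --- and de-equivariantizing $\C$ by it yields a non-pointed fusion category of dimension $4$, i.e.\ an Ising category, which lifts back to an Ising subcategory of $\C$ by \cite[Lemma 4.7]{2016BruPlaRow}. (Alternatively, since $\C$ is weakly integral it is ribbon, and $\theta_{X^{*}}=\theta_{X}$ combined with $\theta_{\eta\otimes X}=\theta_{\eta}\theta_{X}$ forces $\theta_{\eta}=1$, so $\langle\eta\rangle$ is Tannakian and this sub-case is vacuous.) Relatedly, in your Tannakian sub-case the conclusion to aim for is not a ``grading incompatibility'' but simply that the dimension-$4$ de-equivariantization is Ising, hence $\C$ contains an Ising category, contradicting primality.
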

\begin{proof}
In this case,  $\C$ is of type $(1,4;\sqrt2,2 )$. Since $\mathcal{Z}_2(\C_{pt})$ is a fusion subcategory of $\C_{pt}$, $\FPdim(\mathcal{Z}_2(\C_{pt}))$ divides $\FPdim(\C_{pt})$. By Lemma \ref{cad_cpt}, $\mathcal{Z}_2(\C_{pt})$ contains $\svect$. Hence $\FPdim(\mathcal{Z}_2(\C_{pt}))=2$ or $4$.

If $\FPdim(\mathcal{Z}_2(\C_{pt}))=2$ then $\mathcal{Z}_2(\C_{pt})=\svect$ and hence $\C_{pt}$ is slightly degenerate. So $\C$ contains an Ising category by Lemma \ref{cpt}. If $\FPdim(\mathcal{Z}_2(\C_{pt}))=4$  then $\C_{pt}$ contains a Tannakian subcategory $\D=\Rep(\mathcal{Z}_2)$ by Theorem \ref{SymmCat}. Then the de-equivariantization of $\C$ by $\D$ is non-pointed by \cite[Corollary 2.2]{burciu2013fusion}, and of dimension $4$ (see equation (\ref{eq11})). Hence it is an Ising category, see Section \ref{sec2.3}. By \cite[Lemma 4.7]{2016BruPlaRow}, $\C$ also contains an Ising category.
\end{proof}

\begin{lemma}\label{Exist_Ising31}
Suppose that $\E\subseteq \mathcal{Z}_2(\C)$ is a non-trivial Tannakian subcategory. Then the fusion category $\C$ contains an Ising category.
\end{lemma}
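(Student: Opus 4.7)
The plan is to reduce to a smaller braided non-pointed extension of $\vect_{\mathbb{Z}_{2}}^{0}$ by de-equivariantizing $\C$ by a prime-order Tannakian subcategory of $\E$, and then to induct on $\FPdim(\C)$, using Lemma~\ref{Exist_Isingnew} as the base case and \cite[Lemma~4.7]{2016BruPlaRow} as the device for lifting an Ising subcategory back to $\C$.

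First I would invoke Lemma~\ref{cad_cpt}(1) to place $\E \subseteq \mathcal{Z}_{2}(\C) \subseteq \C_{pt}$, so $\E \cong \Rep(H)$ for a non-trivial finite group $H$. Choosing a prime $p \mid |H|$ and a subgroup $N \leq H$ of index $p$ produces a Tannakian subcategory $\Rep(\mathbb{Z}_{p}) \cong \Rep(H/N) \subseteq \E$, and after replacing $\E$ we may assume $\E \cong \Rep(\mathbb{Z}_{p}) \subseteq \mathcal{Z}_{2}(\C)$. Because $\E$ sits inside the M\"{u}ger center, the de-equivariantization $\D := \C_{\mathbb{Z}_{p}}$ inherits a canonical braiding from $\C$ by \cite[Proposition~4.56(i)]{drinfeld2010braided} and has $\FPdim(\D) = 4n/p$, where $4n = \FPdim(\C)$ by Theorem~\ref{General_TY_Cat}. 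Writing $F \colon \C \to \D$ for the free-module functor, $F$ preserves Frobenius-Perron dimensions and sends invertibles to invertibles; and for any $\sqrt{2}$-dimensional simple $X \in \C$ the image $F(X)$ is simple of Frobenius-Perron dimension $\sqrt{2}$, since $F(X) \otimes F(X)^{*} = \1 \oplus F(\delta)$ with $F(\delta) \neq \1$ (because $\delta \notin \mathcal{Z}_{2}(\C)$: otherwise $\C_{ad} \subseteq \mathcal{Z}_{2}(\C)$ would by Lemma~\ref{cad_cpt}(1) force $\C_{pt} \supseteq \C$ and contradict non-pointedness), while any finer decomposition of $F(X)$ into simples would sum to at least $2 > \sqrt{2}$. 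Hence $\cd(\D) = \{1, \sqrt{2}\}$, and by Corollary~\ref{necessary_sufficent} $\D$ is again a braided non-pointed extension of $\vect_{\mathbb{Z}_{2}}^{0}$, of strictly smaller Frobenius-Perron dimension.

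It then suffices to find an Ising subcategory in $\D$ (and lift to $\C$ using that $\C$ is the $\mathbb{Z}_{p}$-equivariantization of $\D$). Splitting via Theorem~\ref{SymmCat} applied to the symmetric category $\mathcal{Z}_{2}(\D)$: (i) if $\mathcal{Z}_{2}(\D)$ contains a non-trivial Tannakian subcategory, the inductive hypothesis applied to $\D$ yields an Ising subcategory; (ii) if $\mathcal{Z}_{2}(\D) = \vect$, then $\D$ is a non-degenerate non-pointed generalized Tambara-Yamagami category, and \cite[Theorem~5.4]{natale2013faithful} (cf.\ the Example following Corollary~\ref{necessary_sufficent}) gives $\D \cong \I \boxtimes \B$ with $\I$ Ising; (iii) if $\mathcal{Z}_{2}(\D) = \svect$, then $\D$ is slightly degenerate, and I would aim to invoke Lemma~\ref{cpt}.

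I expect case (iii) to be the main obstacle. By Lemma~\ref{cad_cpt}(2), $\mathcal{Z}_{2}(\D_{pt}) = \D_{ad} \vee \mathcal{Z}_{2}(\D)$ is a join of two copies of $\svect$, which the same non-pointedness argument forces to be distinct subcategories of $\D$; so $\mathcal{Z}_{2}(\D_{pt})$ has Frobenius-Perron dimension $4$ and Theorem~\ref{SymmCat} supplies a non-trivial Tannakian $\Rep(\mathbb{Z}_{2}) \subseteq \mathcal{Z}_{2}(\D_{pt})$. Following the strategy of Lemma~\ref{Exist_Isingnew} Case~(b), I would de-equivariantize $\D$ by this $\Rep(\mathbb{Z}_{2})$ to produce via \cite[Corollary~2.2]{burciu2013fusion} a non-pointed fusion category of strictly smaller dimension, extract an Ising subcategory there by further application of the inductive machinery (the ultimate base case being the tautological fact that a non-pointed fusion category of Frobenius-Perron dimension $4$ is Ising), and lift back first to $\D$ and then to $\C$ via \cite[Lemma~4.7]{2016BruPlaRow}.
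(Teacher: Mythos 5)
Your overall strategy --- de-equivariantize $\C$ by a prime-order Tannakian subcategory of $\mathcal{Z}_2(\C)$, locate an Ising category in the quotient, and lift it back via \cite[Lemma 4.7]{2016BruPlaRow} --- is the same as the paper's, and your first two paragraphs (the reduction to $\Rep(\mathbb{Z}_p)$, the verification that $\D=\C_{\mathbb{Z}_p}$ is again a braided non-pointed extension of $\vect_{\mathbb{Z}_2}^{0}$ of smaller dimension, and cases (i) and (ii) of your trichotomy on $\mathcal{Z}_2(\D)$) are correct, and indeed more explicit than the paper's one-line appeal to ``the induction.'' The problem is case (iii), which is exactly the case your extra care exposes.

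Two things go wrong there. First, the Tannakian subcategory $\Rep(\mathbb{Z}_2)$ you extract from $\mathcal{Z}_2(\D_{pt})$ is, by the very hypothesis of case (iii), \emph{not} contained in $\mathcal{Z}_2(\D)=\svect$; the de-equivariantization of $\D$ by it is a fusion category but carries no braiding. All of the ``inductive machinery'' you then want to apply to it (M\"uger centers, non-degeneracy, Lemma \ref{Exist_Ising31} itself, \cite[Theorem 5.4]{natale2013faithful}) presupposes a braiding. This move is harmless in Lemma \ref{Exist_Isingnew} only because there the quotient has Frobenius--Perron dimension $4$, where ``non-pointed implies Ising'' needs no braiding; once $\FPdim(\D)>8$ your recursion has nothing to stand on. Second, and more fundamentally, the plan ``find an Ising subcategory of $\D$ and lift it'' cannot succeed in general: if $\D$ is slightly degenerate and generated by a $\sqrt{2}$-dimensional simple object, then $\U(\D)$ is cyclic, $\D$ is prime (see the unnumbered proposition following Lemma \ref{faithful}), and $\D$ contains no Ising subcategory whatsoever, since an Ising subcategory is non-degenerate. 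Your argument neither rules out that $\D=\C_{\mathbb{Z}_p}$ has this form nor provides an alternative source of an Ising subcategory of $\C$ in that event. Note that the paper's treatment of slightly degenerate categories (Lemma \ref{Exist_Ising32}) deliberately avoids de-equivariantization and instead passes to the proper subcategory generated by a non-faithful $\sqrt{2}$-dimensional simple object, which is available only when the universal grading group is not cyclic --- precisely because the cyclic case genuinely has no Ising subcategory. To close case (iii) you would need either to show that $\C_{\mathbb{Z}_p}$ is never of that prime type when $\mathcal{Z}_2(\C)$ contains a non-trivial Tannakian subcategory, or to restructure the induction along the lines of Lemma \ref{Exist_Ising32}.
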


\begin{proof}
 We prove the theorem by induction on the dimension of $\C$.

\medbreak
By Theorem \ref{General_TY_Cat}, we may assume that $\C$ is of type $(1,2n;\sqrt2,n )$, and hence $\FPdim(\C)=4n$. When $n=1$ then $\C$ is of type $(1,2;\sqrt2,1 )$, and hence $\C$ is an Ising category. In this case $\mathcal{Z}_2(\C)$ is trivial. So $n\geq 2$.

When $n=2$ the lemma follows from Lemma \ref{Exist_Isingnew}. This completes the proof of basic step. In the rest of our proof, we assume that $n\geq 2$.

\medbreak
By assumption, there exists a group $G$ such that $\Rep(G)\cong \E$ as symmetric fusion categories. Let $\A=\C_{G}$ be the de-equivariantization of $\C$ by $\E$. Then $\A$ is braided by \cite[Remark 2.3]{etingof2011weakly}. By \cite[Corollary 2.2]{burciu2013fusion}, the Frobenius-Perron dimension of every simple object of $\A$ is $1$ or $\sqrt2$. It follows from Corollary \ref{necessary_sufficent} that $\A$ is an $H$-extension of a rank $2$ pointed fusion category for some finite group $H$. Since  $\FPdim(\A)= \frac{\FPdim(\C)}{|G|}$, the induction shows that $\A$ contains an Ising cateogry. By \cite[Lemma 4.7]{2016BruPlaRow}, $\C$ also contains an Ising category.
\end{proof}

\begin{lemma}\label{Exist_Ising32}
Suppose that $\C$ is slightly degenerate and $\U(\C)$ is not cyclic. Then the fusion category $\C$ contains an Ising category.
\end{lemma}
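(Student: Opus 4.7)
The plan is to reduce the lemma to Lemma~\ref{Exist_Isingnew} by exhibiting a fusion subcategory $\B\subseteq\C$ of Frobenius--Perron dimension $8$ that is braided and non-pointed. By Theorem~\ref{General_TY_Cat}, the universal grading $\C=\bigoplus_{g\in\U(\C)}\C_g$ has every component of Frobenius--Perron dimension $2$, and the adjoint $\mathbb{Z}_2$-grading corresponds to a surjection $\pi\colon\U(\C)\twoheadrightarrow\mathbb{Z}_2$ whose kernel is exactly the set of components containing invertible objects. It therefore suffices to produce a subgroup $K\subseteq\U(\C)$ with $|K|=4$ and $K\not\subseteq\ker(\pi)$: for then $\B:=\bigoplus_{g\in K}\C_g$ is closed under tensor (by multiplicativity of the grading) and duals, has Frobenius--Perron dimension $|K|\cdot 2=8$, inherits a braiding from $\C$, and is non-pointed because $K\not\subseteq\ker(\pi)$ guarantees a $\sqrt2$-dimensional simple in some component. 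By Corollary~\ref{necessary_sufficent}, $\B$ is an extension of $\vect_{\mathbb{Z}_2}^{0}$, and Lemma~\ref{Exist_Isingnew} supplies an Ising subcategory of $\B\subseteq\C$.

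The remaining task is producing such a $K$, which is where the hypotheses enter. First, Lemma~\ref{cad} and the slight degeneracy give both $\C_{ad}\cong\svect$ and $\Z_2(\C)\cong\svect$; combining $|G(\C)|=|\U(\C)|$ (Theorem~\ref{General_TY_Cat}) with Proposition~\ref{slig_degen} shows that these are \emph{distinct} subcategories of $\C_{pt}$. Hence by Lemma~\ref{cad_cpt}(2), $\Z_2(\C_{pt})=\C_{ad}\vee\Z_2(\C)\cong\mathbb{Z}_2\times\mathbb{Z}_2$ has Frobenius--Perron dimension $4$. Writing the abelian non-cyclic group $\U(\C)$ as $P\times Q$ with $P$ its $2$-Sylow and $Q$ of odd order, $P$ itself is non-cyclic, say $P\cong\mathbb{Z}_{2^{a_1}}\times\cdots\times\mathbb{Z}_{2^{a_k}}$ with $k\geq 2$, and $\pi$ factors through $P$.

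A short case analysis on $\pi|_{P}$ now produces $K$. If the $2$-torsion $P[2]$ has an element $g\notin\ker(\pi)$, combine it with any other order-$2$ element of $P$ (which exists since $P$ is non-cyclic, so $|P[2]|\geq 4$) to obtain a Klein four-subgroup $K$ with $K\not\subseteq\ker(\pi)$. Otherwise $P[2]\subseteq\ker(\pi)$, and we instead seek an element of exact order $4$ with $\pi(g)\neq 0$, which generates a cyclic $K\cong\mathbb{Z}_4$ with the required property. The main obstacle is the residual sub-case where both $P[2]\subseteq\ker(\pi)$ and every order-$4$ element of $P$ also lies in $\ker(\pi)$ --- equivalently, $\pi$ is non-trivial only on cyclic factors $\mathbb{Z}_{2^{a_i}}$ with $a_i\geq 3$. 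Ruling out this sub-case requires combining the existence of the generator $\delta_0$ of $\Z_2(\C)$ at a specific order-$2$ grade $g_0\in P[2]\setminus\{e\}$ with the $\FPdim=4$ structure of $\Z_2(\C_{pt})=\langle\delta_{ad},\delta_0\rangle$ established above to force a structural contradiction with the slight degeneracy hypothesis, thereby completing the case analysis and the proof.
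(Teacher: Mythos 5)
Your reduction to Lemma~\ref{Exist_Isingnew} is sound as far as it goes: if $K\leq\U(\C)$ has order $4$ and $K\not\subseteq\ker(\pi)$, then $\B=\bigoplus_{g\in K}\C_g$ is a braided, non-pointed fusion subcategory of Frobenius--Perron dimension $8$, Corollary~\ref{necessary_sufficent} makes it an extension of $\vect_{\mathbb{Z}_2}^{0}$, and Lemma~\ref{Exist_Isingnew} (whose proof uses only the standing hypotheses of the subsection, not slight degeneracy of the ambient category) yields an Ising subcategory. The problem is that such a $K$ need not exist, and you do not close the sub-case you yourself flag. A subgroup of order $4$ meets $\U(\C)\setminus\ker(\pi)$ only if some element of order dividing $4$ lies outside $\ker(\pi)$; if, say, $\U(\C)\cong\mathbb{Z}_8\times\mathbb{Z}_2$ with $\pi$ equal to reduction mod $2$ on the first factor and trivial on the second, then every element of order at most $4$ lies in $\ker(\pi)$, so no admissible $K$ exists on purely group-theoretic grounds. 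Your last paragraph acknowledges this residual sub-case but only asserts that the positions of $\C_{ad}$ and $\Z_2(\C)$ inside $\Z_2(\C_{pt})$ ``force a structural contradiction''; no contradiction is actually derived, and it is not evident that one is available --- the generator of $\Z_2(\C)$ sits in some order-$2$ grade $g_0\in\ker(\pi)\setminus\{e\}$, which is entirely consistent with the bad configuration. As written, the proof is incomplete exactly at its crux.

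For comparison, the paper avoids subgroup hunting altogether: it takes a $\sqrt2$-dimensional simple object $X$, observes that non-cyclicity of $\U(\C)$ forces $X$ to be non-faithful (Lemma~\ref{faithful}), so $\D=\langle X\rangle$ is a \emph{proper} non-pointed braided subcategory, and then splits on $\Z_2(\D)$ being trivial, $\svect$, or containing a nontrivial Tannakian subcategory, invoking respectively Natale's non-degenerate classification, induction on dimension, and Lemma~\ref{Exist_Ising31}. To salvage your approach you would need either genuine categorical input ruling out the residual sub-case, or to replace the order-$4$ subgroup by the cyclic group $\langle a\rangle$ with $X\in\C_a$ and then analyze the M\"{u}ger center of the resulting subcategory --- which is precisely the paper's route.
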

\begin{proof}
Let $X$ be a $\sqrt{2}$-dimensional simple object of $\C$, and let $\D=\langle X\rangle$ be the subcategory generated by $X$. Then $\D$ is a non-pointed fusion subcategory of $\C$. By Lemma \ref{faithful}, $X$ is not faithful and hence $\D$ is a proper fusion subcategory of $\C$. We will prove that $\D$ contains an Ising category.

\medbreak
If $\mathcal{Z}_2(\D)=\vect$ then $\D$ is non-degenerate then $\D$ contains an Ising category by \cite[Theorem 5.4]{natale2013faithful}. If $\mathcal{Z}_2(\D)=\svect$ then $\D$ is also slightly degenerate. By induction on the dimension, $\D$ contains an Ising category. In other cases, $\mathcal{Z}_2(\D)$ contains a non-trivial Tannakian subcategory. Lemma \ref{Exist_Ising31} shows in this case that $\D$ contains an Ising category.
\end{proof}

\begin{remark}
Assume that $\C$ is of type $(1,2n;\sqrt2,n )$. By Lemma \ref{lemma100}, if $\C$ is slightly degenerate then $n$ must be even.
\end{remark}

\begin{theorem}\label{classification2}
Let $\C=\oplus_{g\in G}\C_g$ be an extension of $\vect_{\mathbb{Z}_2}^{\omega}$ for some $\omega$. Assume that $\C$ is braided. Then $\C$ is exactly one of the following:

(1)\,  $\C$ is pointed.

(2)\,  $\C$ is slightly degenerate and generated by a $\sqrt{2}$-dimensional simple object. In this case, $\C$ is prime.

(3)\,  $\C\cong \I\boxtimes \B$, where $\I$ is an Ising category, $\B$ is a braided pointed fusion category.
\end{theorem}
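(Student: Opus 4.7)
The plan is to split according to the structure of the M\"{u}ger center $\mathcal{Z}_2(\C)$, apply the Ising-extraction lemmas already established above, and then invoke Theorem~\ref{MugerThm} to obtain the Deligne product decomposition in case~(3). First I would apply Theorem~\ref{General_TY_Cat}: either $\C$ is pointed, giving case~(1), or $\omega = 0$ is forced and $\C$ is a generalized Tambara--Yamagami fusion category of type $(1, 2n; \sqrt{2}, n)$. In the latter case Lemma~\ref{cad} identifies $\C_{ad}$ with $\svect$ as braided fusion categories; I will assume we are in this non-pointed branch for the rest of the argument.

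Next I would split into cases according to $\mathcal{Z}_2(\C)$, which by Lemma~\ref{cad_cpt}(1) sits inside $\C_{pt}$ and is therefore a symmetric fusion category. Theorem~\ref{SymmCat} organizes this into three mutually exclusive possibilities. Case~(a): $\mathcal{Z}_2(\C)$ contains a non-trivial Tannakian subcategory, and Lemma~\ref{Exist_Ising31} directly produces an Ising subcategory of $\C$. Case~(b): $\mathcal{Z}_2(\C) = \vect$, so $\C$ is non-degenerate; then Lemma~\ref{cad_cpt}(2) gives $\mathcal{Z}_2(\C_{pt}) = \C_{ad} \vee \mathcal{Z}_2(\C) = \svect$, so $\C_{pt}$ is slightly degenerate, and Lemma~\ref{cpt} supplies an Ising subcategory. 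Case~(c): $\mathcal{Z}_2(\C) = \svect$, so $\C$ is slightly degenerate. Here I would further split on $\U(\C)$: if $\U(\C)$ is non-cyclic, Lemma~\ref{Exist_Ising32} again supplies an Ising subcategory; if $\U(\C)$ is cyclic, Lemma~\ref{faithful} produces a faithful $\sqrt{2}$-dimensional simple object of $\C$, and the primeness proposition stated immediately after Lemma~\ref{faithful} then shows $\C$ is prime, giving case~(2) of the theorem.

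In every remaining subcase I have exhibited an Ising subcategory $\I \subseteq \C$. Since $\I$ is non-degenerate (Subsection~\ref{sec2.3}), Theorem~\ref{MugerThm} yields $\C \cong \I \boxtimes \I'$ as braided fusion categories, where $\I'$ is the M\"{u}ger centralizer of $\I$ in $\C$. Every simple object of $\C$ has Frobenius--Perron dimension $1$ or $\sqrt{2}$, so no simple of $\I'$ can have dimension $\sqrt{2}$: otherwise its tensor product with the non-invertible simple of $\I$ would produce a $2$-dimensional simple of $\C$, contradicting Theorem~\ref{General_TY_Cat}. Hence $\I'$ is a braided pointed fusion category $\B$, and case~(3) holds.

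The main obstacle is the bookkeeping that keeps the three output cases mutually exclusive and matches each branch of the $\mathcal{Z}_2(\C)$-trichotomy to the correct conclusion; the nontrivial point is that in case~(2) the primeness of $\C$ forbids any non-trivial non-degenerate (in particular Ising) proper subcategory, so the cyclic branch of the slightly degenerate case genuinely does not collide with case~(3), and the case analysis really partitions the non-pointed situation as claimed.
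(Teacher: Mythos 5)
Your proposal is correct and follows essentially the same route as the paper: reduce to the generalized Tambara--Yamagami case via Theorem~\ref{General_TY_Cat}, run the trichotomy on $\mathcal{Z}_2(\C)$ (trivial, $\svect$, or containing a nontrivial Tannakian subcategory) to extract an Ising subcategory except in the slightly degenerate cyclic-grading branch, and then apply Theorem~\ref{MugerThm} with a dimension count to see the centralizer is pointed. The only divergence is the non-degenerate case, where the paper invokes Natale's Theorem~5.4 of \cite{natale2013faithful} directly, while you derive the Ising subcategory internally from Lemma~\ref{cad_cpt}(2) (which gives $\mathcal{Z}_2(\C_{pt})=\svect$) together with Lemma~\ref{cpt}; both arguments are valid.
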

\begin{proof}
The Theorem \ref{General_TY_Cat} shows that $\C$ is either pointed, or a generalized Tambara-Yamagami fusion subcategory. This proves Part (1). To prove Parts (2) and (3), it is enough to consider the case when $\C$ is a generalized Tambara-Yamagami fusion subcategory.

We assume that Part (2) does not hold true and prove Part (3). It suffices to prove that $\C$ contains an Ising category. In fact, if $\C$ contains an Ising category $\I$, then $\C\cong \I\boxtimes \B$ by Theorem \ref{MugerThm}, where $\B$ is the M\"{u}ger centralizer of $\I$ in $\C$. Considering the Frobenius-Perron dimensions of simple objects of $\C$, we can conclude that $\B$ is pointed. Our proof is conducted by considering the M\"{u}ger center $\mathcal{Z}_2(\C)$ of $\C$.

If $\mathcal{Z}_2(\C)$ is trivial then $\C$ is non-degenerate then \cite[Theorem 5.4]{natale2013faithful} shows that $\C$ contains an Ising category. If $\mathcal{Z}_2(\C)$ contains a non-trivial Tannakian subcategory then Lemma \ref{Exist_Ising31} shows that $\C$ contains an Ising category. If $\C$ is slightly degenerate and not generated by a $\sqrt{2}$-dimensional simple object then $\C$ contains an Ising category by Lemma \ref{Exist_Ising32}. This finishes the proof.
\end{proof}
\begin{corollary}\label{extension_of_Ising}
Let $\D$ be an extension of an Ising category $\I$. Assume that $\D$ is braided. Then $\D\cong\I\boxtimes \B$, where $\B$ is a braided pointed fusion category.
\end{corollary}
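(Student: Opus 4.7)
The plan is to apply Theorem \ref{MugerThm} directly, avoiding a detour through Theorem \ref{classification2}. The key input is that any Ising category is non-degenerate as a braided fusion category (Subsection \ref{sec2.3}), and since the braiding on the subcategory $\I = \D_e$ is inherited from $\D$, $\I$ remains a non-degenerate braided fusion subcategory of $\D$. An immediate application of Theorem \ref{MugerThm} produces a decomposition $\D \cong \I \boxtimes \I'$ as braided fusion categories, where $\I'$ is the M\"uger centralizer of $\I$ in $\D$. The task reduces to showing that $\I'$ is pointed, in which case setting $\B := \I'$ finishes the proof, since $\I'$ inherits a braiding from $\D$.

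To prove that $\I'$ is pointed, I would restrict the given faithful grading $\D = \oplus_{g \in G} \D_g$, with $\D_e = \I$, to $\I'$. The centralizer $\I'$ is closed under direct summands (the defining condition $c_{Y,A} c_{A,Y} = \id_{A \otimes Y}$ passes to summands since the braiding distributes over direct sums), so the restriction yields a faithful grading
\[
\I' = \bigoplus_{h \in H} \I'_h, \qquad H := \{\, g \in G : \I' \cap \D_g \neq 0 \,\} \leq G,
\]
whose trivial component is $\I'_e = \I' \cap \D_e = \I' \cap \I$. The subcategory $\I' \cap \I$ is by definition the M\"uger center of $\I$ computed with the braiding inherited from $\D$, which coincides with its intrinsic Ising braiding, so non-degeneracy of $\I$ forces $\I' \cap \I = \vect$. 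Since every homogeneous component of a faithful grading has the same Frobenius-Perron dimension (Subsection \ref{sec2.1}), each $\I'_h$ has $\FPdim 1$ and thus consists of a single invertible simple object. Therefore $\I'$ is pointed, as desired.

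The main delicate point is identifying the $e$-graded component of the induced grading on $\I'$ with the intrinsic M\"uger center of $\I$; this is immediate once one uses that the braiding appearing in the centralizer definition restricts on $\I$ to its intrinsic Ising braiding. Given the heavy machinery of Theorem \ref{MugerThm}, the argument is short and sidesteps the more involved route of first establishing $\cd(\D) \subseteq \{1, \sqrt{2}\}$ (by a graded FPdim case analysis on each $\D_g$) and then invoking Theorem \ref{classification2} together with an extra step to recognize $\I$ itself as the Ising factor.
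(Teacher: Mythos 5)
Your proof is correct, and it takes a genuinely different route from the paper's. The paper first pins down the adjoint subcategory: it uses Proposition \ref{cad_non_deg} to rule out $\D_{ad}=\I$ (since $\I_{pt}\neq\vect$), concludes $\D_{ad}=\I_{pt}\cong\vect_{\mathbb{Z}_2}$, so $\D$ is a non-pointed extension of a rank $2$ pointed category, and then invokes the full classification of Theorem \ref{classification2} to land in case (3). You instead apply Theorem \ref{MugerThm} directly to the non-degenerate trivial component $\I=\D_e$ and prove pointedness of $\I'$ by restricting the given faithful grading: the support subgroup $H$ gives a faithful grading of $\I'$ whose trivial component is $\I'\cap\I=\Z_2(\I)=\vect$, so every component has Frobenius--Perron dimension $1$ and $\I'$ is pointed. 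All the steps check out: $\I$ with the inherited braiding is a non-degenerate Ising category (Subsection \ref{sec2.3}), the centralizer is a fusion subcategory, the restricted grading is faithful over its support, and equal dimensions of graded components is exactly the fact recorded in Subsection \ref{sec2.1}. Your approach is more self-contained --- it avoids the Tannakian/de-equivariantization machinery behind Theorem \ref{classification2} entirely, and it sidesteps the need to explain why case (2) of that theorem (the slightly degenerate prime case) cannot occur here, a point the paper's proof leaves implicit. An even shorter variant of your pointedness step, in the spirit of Proposition \ref{cad_non_deg}: for simple $X\in\I'$ one has $X\otimes X^*\in\D_{ad}\cap\I'\subseteq\I\cap\I'=\vect$, so $X$ is invertible. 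What the paper's route buys in exchange is the extra structural information that $\D_{ad}=\I_{pt}$ and reuse of the already-established classification theorem.
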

\begin{proof}
If $\I$ is the adjoint subcategory of $\D$ then Proposition \ref{cad_non_deg} shows that $\I_{pt}$ should be trivial. This is a contradiction. Hence $\D_{ad}$ should be a proper fusion subcategory of $\I$. On the other hand, $I_{pt}$ is the only proper fusion subcategory of $\I$. Hence $\D_{ad}=\I_{pt}$ and $\D$ is an extension of a pointed fusion category of rank $2$. Moreover, $\D$ is not pointed since $\I$ is not pointed. Therefore $\D\cong\I\boxtimes \B$ by Theorem \ref{classification2}, where $\B$ is a braided pointed fusion category.
\end{proof}

\section*{Acknowledgements}
The research of the first author is partially supported by the startup foundation for introducing talent of NUIST and the Natural Science Foundation of China (Grant No. 11201231).



\begin{thebibliography}{10}
\expandafter\ifx\csname url\endcsname\relax
  \def\url#1{\texttt{#1}}\fi
\expandafter\ifx\csname urlprefix\endcsname\relax\def\urlprefix{URL }\fi

\bibitem{BaKi2001lecture}
B.~Bakalov, J.~Alexander~Kirillov, Lectures on Tensor Categories and Modular
  Functors, University Lecture Series, vol. 21, Amer. Math. Soc., 2001.

\bibitem{2016BruPlaRow}
P.~Bruillard, J.~Y. Plavnik, E.~C. Rowell, Modular categories of dimension
  $p^3m$ with $m$ square-free, to appear in Proc. Amer. Math. Soc., preprint
  arXiv:1609.04896.

\bibitem{burciu2013fusion}
S.~Burciu, S.~Natale, Fusion rules of equivariantizations of fusion categories,
  J. Math. Phys. 54~(1) (2013) 013511.

\bibitem{deligne1990categories}
P.~Deligne, Cat{\'e}gories {T}annakiennes, in: The Grothendieck Festschrift,
  Springer, 1990, pp. 111--195.

\bibitem{2016DongWang}
C.~Dong, Q.~Wang, Quantum dimensions and fusion rules for parafermion vertex
  operator algebras, Proc. Amer. Math. Soc. 144 (2016) 1483--1492.

\bibitem{2016Dongbraided}
J.~Dong, Braided $ \mathbb{Z}_q$-extensions of pointed fusion categories, Proc.
  Amer. Math. Soc. 145~(3) (2017) 995 --1001.

\bibitem{Dong2017Non}
J.~Dong, L.~Zhang, L.~Dai, Non-trivially graded self-dual fusion categories of
  rank $4$, Acta Math.Sin. 34~(2) (2017) 275--287.

\bibitem{drinfeld2010braided}
V.~Drinfeld, S.~Gelaki, D.~Nikshych, V.~Ostrik, On braided fusion categories
  {I}, Selecta Math., New Ser. 16~(1) (2010) 1--119.

\bibitem{etingof2005fusion}
P.~Etingof, D.~Nikshych, V.~Ostrik, On fusion categories, Ann. Math. 162~(2)
  (2005) 581--642.

\bibitem{etingof2010fusion}
P.~Etingof, D.~Nikshych, V.~Ostrik, Fusion categories and homotopy theory,
  Quantum Topology 1~(3) (2010) 209--273.

\bibitem{etingof2011weakly}
P.~Etingof, D.~Nikshych, V.~Ostrik, Weakly group-theoretical and solvable
  fusion categories, Adv. Math. 226~(1) (2011) 176--205.

\bibitem{gelaki2008nilpotent}
S.~Gelaki, D.~Nikshych, Nilpotent fusion categories, Adv. Math. 217~(3) (2008)
  1053--1071.

\bibitem{2016GonNatale}
M.~E. Gonz{\'a}ez, S.~Natale, On fusion rules and solvability of a fusion
  category, Journal of Group Theory.

\bibitem{jordan2009classification}
D.~Jordan, E.~Larson, On the classification of certain fusion categories, J.
  Noncommut. Geom. 3~(3) (2009) 481--499.

\bibitem{liptrap2010generalized}
J.~Liptrap, Generalized {T}ambara-{Y}amagami categories, preprint
  arXiv:1002.3166.

\bibitem{muger2000galois}
M.~M{\"u}ger, Galois theory for braided tensor categories and the modular
  closure, Adv. Math. 150~(2) (2000) 151--201.

\bibitem{muger2003structure}
M.~M{\"u}ger, On the structure of modular categories, Proc. London Math. Soc.
  87~(02) (2003) 291--308.

\bibitem{natale2013faithful}
S.~Natale, Faithful simple objects, orders and gradings of fusion categories,
  Algebr. Geom. Topol. 13 (2013) 1489--1511.

\bibitem{ostrik2003fusion}
V.~Ostrik, Fusion categories of rank 2, Math. Res. Lett. 10~(2) (2003)
  177--183.

\bibitem{Turaer1994}
V.~Turaer, Quantum Invariants of Knots and 3-Manifolds, de Gruyter Stud. Math.,
  vol. 18, de Gruyter, Berlin, 1994.

\end{thebibliography}

\end{document}